
\documentclass[11pt]{article}       
\usepackage{geometry}               
\geometry{letterpaper}          
\geometry{margin=1in}

\usepackage{graphicx}
\usepackage{caption}
\usepackage{subcaption}
\usepackage{amsmath} 
\usepackage{amssymb}  
\usepackage{amsthm}  
\usepackage{bm}
\usepackage{color}
\usepackage{array}
\usepackage{cite}
\usepackage{hyperref}
\usepackage{comment}

\newtheorem{proposition}{Proposition}

\newtheorem{corollary}{Corollary}

\newtheorem{theorem}{Theorem}

\numberwithin{equation}{section}

\DeclareMathOperator*{\essinf}{ess\,inf}

\title{Periodic homogenization of geometric equations without perturbed correctors}





\newcommand{\footremember}[2]{%
    \footnote{#2}
    \newcounter{#1}
    \setcounter{#1}{\value{footnote}}%
}

\author{%
  Jiwoong Jang \footremember{trailer}{Department of Mathematics, University of Maryland-College Park (jjang124@umd.edu)}
  }


\date{}
\providecommand{\keywords}[1]{\textbf{Key words.} #1}
\providecommand{\MSC}[1]{\textbf{MSC codes.} #1}

\begin{document}
\maketitle

\pagestyle{myheadings}
\thispagestyle{plain}

\begin{abstract}
Proving homogenization has been a subtle issue for geometric equations due to the discontinuity when the gradient vanishes. A sufficient condition for periodic homogenization using perturbed correctors is suggested in the literature \cite{CM14} to overcome this difficulty. However, some noncoercive equations do not satisfy this condition. In this note, we prove homogenization of geometric equations without using perturbed correctors, and therefore we conclude homogenization for the noncoercive equations. Also, we provide a rate of periodic homogenization of coercive geometric equations by utilizing the fact that they remain coercive under perturbation. We also present an example that homogenizes with a rate slower than $O(\varepsilon)$.
\end{abstract}

\keywords{Level-set mean curvature flows, geometric equations, viscosity solution theory, periodic homogenization, corrector problems, rate of convergence}

\vspace{0.2cm}

\MSC{35B10, 35B27, 35B40, 35D40}

\section{Introduction}\label{sec:introduction}
\subsection{Settings and motivations}\label{subsec:settings}
In this paper, we are interested in homogenization of geometric equations in the periodic setting, i.e., the convergence of solutions $u^{\varepsilon}(x,t)$ to
\begin{align}\label{eq:uepsilon}
\begin{cases}
u^{\varepsilon}_t+F\left(\varepsilon D^2 u^{\varepsilon},Du^{\varepsilon},\frac{x}{\varepsilon}\right)=0, \quad &\text{in}\quad\mathbb{R}^n\times(0,\infty),\\
u^{\varepsilon}(\cdot,0)=u_0, \quad &\text{on}\quad\mathbb{R}^n,
\end{cases}
\end{align}
to the solution $u(x,t)$ to
\begin{align}\label{eq:ueffective}
\begin{cases}
u_t+\overline{F}\left(\frac{Du}{|Du|}\right)|Du|=0, \quad &\text{in}\quad\mathbb{R}^n\times(0,\infty),\\
u(\cdot,0)=u_0, \quad &\text{on}\quad\mathbb{R}^n.
\end{cases}
\end{align}
as $\varepsilon\to0^+$, where $F$ is an operator periodic in the spatial variable, and $\overline{F}$ is an effective operator. The equations \eqref{eq:uepsilon}, \eqref{eq:ueffective} describe the large-scale behavior of the level-sets $\{u^{\varepsilon}(\cdot,t)=0\}$, understood as the fronts. The environment and the curvature determine the rule of evolution of the fronts as the normal velocity in typical models. As the curvature effect is now involved, we call the operator $F$ \emph{geometric}, of which we will provide the precise definition in Subsection \ref{subsec:mainresults}.

Our main motivations of this study are the following two geometric equations; one is the curvature $G$-equation \cite{GLXY22} with the normal velocity $V=(1-\varepsilon d\kappa)_++W\left(\frac{x}{\varepsilon}\right)\cdot\Vec{\mathrm{n}}$ with the microscale parameter $\varepsilon>0$. Here, the normal vector is outward to the set $\{u^{\varepsilon}(\cdot,t)>0\}$, and $\kappa$ represents the mean curvature, which is nonpositive when $\{u^{\varepsilon}(\cdot,t)>0\}$ is convex. The number $d>0$ describes the flame thickness \cite{M51,P00}, and $W$ is a vector field modeling wind. The other is the mean curvature equation with the normal velocity $V=\varepsilon\kappa+c\left(\frac{x}{\varepsilon}\right)$ with a spatial forcing term $c$.

It was first pointed out in \cite{CM14} that the classical perturbed test function method \cite{E89,E92} does not directly imply the full homogenization for geometric equations because of the discontinuity of geometric operators $F$ at $p=0$. Instead, the conditional homogenization \cite[Theorem 1.5]{CM14} is derived under a stronger condition, namely, that \emph{perturbed correctors} exist. This condition is satisfied by coercive forced equations, as the perturbation respects the coercivity condition on $c$. However, this is not the case for the curvature $G$-equation due to the noncoercive nature, meaning that homogenization of the curvature $G$-equation has been unclear so far. Motivatived by this circumstance, we relax the condition on perturbed correctors in this paper.

\subsection{Literature overview}\label{subsec:literature}

Homogenization of geometric equations in periodic media has received a lot of attentions. In \cite{LS05}, it is shown that mean curvature motions with a forcing term $c$ admit Lipschitz continuous correctors under the coercivity condition that
\begin{align}\label{assumption:coercivity}
\essinf_{\mathbb{R}^n}\left(c^2-(n-1)|Dc|\right)\geq\delta>0
\end{align}
holds for some $\delta>0.$ This condition can be seen as a small oscillation condition on $|Dc|$, depending on the magnitude of a positive force. For a positive forcing term without assuming the condition \eqref{assumption:coercivity}, \cite{CM14} shows by an example that homogenization does not happen in 3-d or higher. In contrast, also shown in \cite{CM14}, homogenization holds true in 2-d as long as the force is positive. In \cite{GK20}, a further asymptotic analysis on the head speed and the tail speed is given for a positive and Lipschitz continuous force. It is concluded in \cite{GK20} that the head and tail speeds are continuous in the normal directions, and homogenization happens for any uniformly continuous initial data if and only if the two speeds agree.

The case of sign-changing forces also has been investigated with smallness conditions. Namely, \cite{DKY08} found Lipschitz continuous correctors under the condition that $c\in C^2(\mathbb{T}^n)$ and $\|c\|_{C^2(\mathbb{T}^n)}$ is small enough. A variational approach is taken in \cite{CN13}, showing that if $c(x)=g(x_1,\cdots,x_{n-1})$ (a laminated environment), and if
\begin{align*}
\int_{(0,1)^{n-1}}g>0,\ \min g\leq0\ \text{and }\max g-\min g< C_n 2^{1/n},
\end{align*}
with the isoperimetric constant $C_n>0$ (appearing in \cite{BBBL03}), then there exists a Lipschitz continuous corrector for $p=e_n$. In general, moreover, \cite{CN13} proved the existence of \emph{generalized} traveling waves whose support is not necessarily a full cell. Also, interesting questions about homogenization with a sign-changing force, together with first-order Hamilton-Jacobi equations without curvature effect, are discussed in \cite{CLS09}. The paper \cite{CLS09} proved that when $n=2,\ c(x)=g(x_1)$, there exists a Lipschitz continuous corrector under the condition
\begin{align*}
\int_0^1g>0\ \text{and }\int_0^1g-\min g<2.
\end{align*}
Allowing a large oscillation, a counterexample to the homogenization with $\int_0^1g=0$ is also given in \cite{CLS09}.

Another interesting class of geometric equations in the context of homogenization is the curvature $G$-equations. The equations have been introduced in \cite{M51} and been mathematically studied in \cite{GLXY22,MMTXY23} recently. The curvature effect in $V=(1-\varepsilon d\kappa)_++W\left(\frac{x}{\varepsilon}\right)\cdot\Vec{\mathrm{n}}$ is considered to describe the physical phenomenon of the flame propagation that a concave part of the flame front propagates faster proportionally to the flame thickness, called the Markstein number \cite{M51}. The ()$_+$-correction is considered in \cite{GLXY22,MMTXY23} in order to ensure the physical validity, and this correction was first introduced in \cite{R94}.

\vspace{0.5cm}


One of the main points of \cite{CM14} is about the issue on deriving homogenization for general geometric equations, including the forced mean curvature motions and the curvature $G$-equations described above. The paper \cite{CM14} carefully applied the perturbed test function method \cite{E89,E92}, and provided a sufficient condition to guarantee the full homogenization, namely that the cell problems are solved for perturbed equations. Mean curvature motions with a coercive forcing term $c$, satisfying either \eqref{assumption:coercivity} in all dimensions or $\inf c>0$ in 2-d, meet this sufficient condition, and consequently, homogenization is concluded. However, it was unclear whether the curvature $G$-equations studied in \cite{GLXY22} satisfy the condition or not.




Facing this difficulty, it is natural to ask whether or not having correctors, not including perturbed ones, ensures homogenization. This is a nontrivial issue for geometric equations as pointed out in \cite{CM14}, although commonly believed to be true from the perturbed test function method \cite{E89,E92}. In this paper, we confirm that this belief is true even for geometric equations. The idea is that we use \emph{perturbed approximate correctors} instead of perturbed correctors, which is in line with the use of approximate correctors in \cite{C-DI01}.

Using the fact that mean curvature motions with a coercive forcing term satisfying \eqref{assumption:coercivity} enjoy Lipschitz estimates including their perturbed equations, we give a rate of homogenization of the motions in the periodic setting in this note. See \cite{AC18} in this direction for the random setting. A rate $O(\varepsilon^{1/2})$ for the case of periodic, laminated media can be obtained with a simpler estimate \cite{J23}, and we refer to \cite{QSTY23} for the case of viscous Hamilton-Jacobi equations. Obtaining an optimal rate is a different issue, and see \cite{TY23} for the very recent development of the case of first-order convex Hamilton-Jacobi equations.

\subsection{A brief description of the proof of Theorem \ref{thm:homogenizationofgeometricequations}}\label{subsec:briefdescription}
In this subsection, we give an outline of the proof of Theorem \ref{thm:homogenizationofgeometricequations}, which will be introduced in the next subsection as our main result. To achieve this, we first explain the use of the perturbed test function method in \cite{CM14}. 
The paper \cite{CM14} points out a subtle issue during the classical use of the method as suggested in \cite{E89,E92}. To explain the issue briefly, we implement the classical framework first. We let $\varphi$ be a test function that $\overline{u}-\varphi$ attains a (strict) maximum at $P_0$, where $\overline{u}:=\limsup_{\varepsilon\to0}^*u^{\varepsilon}$. For the standard framework with the doubling variable, we consider
\[\Phi(x,y,t):=u^{\varepsilon}(x,t)-\varphi(y,t)-\varepsilon v\left(\frac{y}{\varepsilon}\right)-\frac{|x-y|^2}{2\sigma}\quad\text{ for }\varepsilon,\sigma>0.\]
Here, a function $v$ is a corrector associated with $p_0=D\varphi(P_0)$. The last term is a penalization term, and we take $\sigma\to0$ first in this framework so that $\lim_{\varepsilon\to0}\lim_{\sigma\to0}(x^{\varepsilon}_{\sigma},y^{\varepsilon}_{\sigma},t^{\varepsilon}_{\sigma})=(x_0,x_0,t_0)$. At a maximizer $(x^{\varepsilon}_{\sigma},y^{\varepsilon}_{\sigma},t^{\varepsilon}_{\sigma})$ of $\Phi$, we find appropriate jets of $u^{\varepsilon}$ and of $v$ by the Crandall-Ishii Lemma, for the latter of which jets of $\varphi$ are translated. We subsequently obtain two viscosity inequalities, one from the subsolution test of $u^{\varepsilon}$ and the other from the supersolution test of $v$. We substract them to get a form
\[
\varphi_t(x_{\sigma}^{\varepsilon},t_{\sigma}^{\varepsilon})+F_*\left(\varepsilon Y,p_1,\frac{x_{\sigma}^{\varepsilon}}{\varepsilon}\right)-F^*\left(\varepsilon Y,p_2,\frac{y_{\sigma}^{\varepsilon}}{\varepsilon}\right)+\overline{F}(p_0)\leq0
\]
with derivatives satisfying
\begin{align*}
\begin{cases}
-\frac{M\varepsilon}{\sigma}I_n\leq \varepsilon Y\leq\frac{M\varepsilon}{\sigma}I_n,\\
\lim_{\varepsilon\to0}\lim_{\sigma\to0}\left(p_2-p_1\right)=0.
\end{cases}
\end{align*}
We can complete the proof if the two terms in the middle are canceled out during the limit $\sigma\to0,\ \varepsilon\to0$ in turn. However, for geometric operators such as $F(X,p)=-\mathrm{tr}\left\{\left(I_n-\widehat{p}\otimes\widehat{p}\right)X\right\}$, the difference of the two terms is $-\mathrm{tr}\left\{\left(\widehat{p_2}\otimes\widehat{p_2}-\widehat{p_1}\otimes\widehat{p_1}\right)\varepsilon Y\right\}$, here $\widehat{p}=\frac{p}{|p|}$ for $p\neq0$, for which we face two issues during the limit. The first is that we lose a bound of $\varepsilon Y$ as $\sigma\to0$, and the second is that $\widehat{p_2}$ does not necessarily approximate $\widehat{p_1}$ as $\sigma\to0,\ \varepsilon\to0$ in order.

The paper \cite{CM14} imposes an additional condition to resolve the issues, which namely assumes that we can find a \emph{perturbed corrector} $v^{2\eta}$ with uniform oscillation $\kappa_0$, i.e., a solution to
\begin{align*}
\begin{cases}
    F^{2\eta}(D^2v^{2\eta},p_0+Dv^{2\eta},y)=\overline{F}^{2\eta}(p_0)\quad\text{ on }\mathbb{T}^n,\\
    \sup v^{2\eta}-\inf v^{2\eta}\leq \kappa_0\,\quad\qquad\qquad\qquad\text{ for all }\eta\in[0,\eta_0].
\end{cases}
\end{align*}
for some $\kappa_0,\eta_0>0$. Here, $F^{2\eta}$ is defined as in \eqref{line:perturbedoperators}. We take $\sigma=\frac{1}{2}\kappa_0^{-1}\varepsilon\eta^2$ so that $-\frac{M\kappa_0}{\eta^2}I_n\leq \varepsilon Y\leq\frac{M\kappa_0}{\eta^2}I_n$. Also, the approximation $\lim_{\varepsilon\to0}\left(\widehat{p_2}-\widehat{p_1}\right)=0$ (now without $\sigma>0$) is concluded as the gradients are now separated from the origin uniformly in $\varepsilon>0$. This is achieved with a geometric idea using the inf-convolution by balls, for which we refer to \cite[Lemma 13.1]{CM14}.

Perturbed correctors are found by using coercivity conditions, such as \eqref{assumption:coercivity} or $\inf c\geq\delta>0$ in 2-d for forced mean curvature motions. These conditions are stable under perturbations (see \cite{LS05}, Proposition \ref{prop:correctors}, \cite[Proposition 9.1]{CM14}), which results in uniform oscillation $\kappa_0>0$ of perturbed correctors. Here, the amplitude $\kappa_0>0$ depends on a coercivity parameter $\delta>0$. The estimates of $\kappa_0$ shown so far in the literature, to the best of the author's knowledge, blow up as $\delta\to0$. In other words, as we lose coercivity, it is hard to expect the solvability of perturbed correctors and their oscillation stable under perturbations. In particular, whether or not we have perturbed correctors for the curvature G-equations is unclear.

A natural way to look at corrector problems is via their approximate problems with a monotone term. We then have the uniqueness of approximate correctors thanks to the monotone term, and moreover, approximate correctors $v^{\lambda}=v^{\lambda}(\cdot,p)$ enjoy a regularity as associated gradients $p$ vary. This regularity is utilized in the quantitative result in \cite{C-DI01}, and this is the reason why approximate correctors are used instead of correctors. That is, it is hard to expect the regularity of correctors $v=v(\cdot,p)$ with respect to $p$ in general. It indeed turns out later that there exists a Hamiltonian, constructed in \cite{MTY19}, for which there is no such regular selection of correctors. 

Our approach of this paper for the proof of Theorem \ref{thm:homogenizationofgeometricequations} is based on this philosophy. As we no more expect perturbed correctors to exist, we instead use \emph{perturbed approximate correctors}, which are always accessible thanks to the monotone term. This is a natural consideration in view of the above context, and moreover, our approach seems to be a correct viewpoint for geometric equations. If we think of the standard framework in which we separate the corrector problem (local problem) from the equation with $u^{\varepsilon}$ (global problem) when proving homogenization in general, our approach geometrically makes sense in that in our situation with geometric operators, all the parameters from the local problem, now the perturbation parameter $\eta$ being included, are independent of $\varepsilon$, which then turn out to provide a uniform lower bound for the separation of vanishing gradient.

\subsection{Main results}\label{subsec:mainresults}

Let $n\geq2$ throughout this paper. Let $S^n$ denote the set of $n\times n$ symmetric matrices. We consider operators $F:S^n\times (\mathbb{R}^n\setminus\{0\})\times \mathbb{R}^n\to\mathbb{R}$ satisfying the following properties.

\begin{itemize}
    \item[(I)] $F$ is continuous in $S^n\times (\mathbb{R}^n\setminus\{0\})\times \mathbb{R}^n$.

    \item[(II)] $F$ is \emph{degenerate elliptic}; $F(Y,p,y)\leq F(X,p,y)$ for all $X,Y\in S^n$ with $X\leq Y$ and all $(p,y)\in(\mathbb{R}^n\setminus\{0\})\times \mathbb{R}^n$.

    \item[(III)] $F$ is \emph{geometric}; $F(\lambda X+\mu p\otimes p, \lambda p,y)=\lambda F(X,p,y)$ for all $(X,p,y)\in S^n\times (\mathbb{R}^n\setminus\{0\})\times \mathbb{R}^n$ and all $\lambda>0,\ \mu\in\mathbb{R}$.

    \item[(IV)] $F$ is $\mathbb{Z}^n$-periodic; $F(X,p,y+k)=F(X,p,y)$ for all $(X,p,y)\in S^n\times (\mathbb{R}^n\setminus\{0\})\times \mathbb{R}^n$ and $k\in\mathbb{Z}^n$.

    \item[(V)] $F$ is \emph{regular};
    \begin{itemize}
        \item[(i)] For every $R>0$, there exists $M>0$ such that $|F(X,p,y)|\leq M$ for all $(X,p,y)\in S^n\times (\mathbb{R}^n\setminus\{0\})\times \mathbb{R}^n$ with $\|X\|\leq R,\ 0<|p|\leq R.$
        \item[(ii)] There exist $K>0$ and $\omega : [0,\infty)\to[0,\infty)$ such that $\omega(0^+)=0$ and
        $$F^*(X,\alpha(x-y),x)-F_*(Y,\alpha(x-y),y)\leq\omega\left(|x-y|(1+\alpha|x-y|)\right)$$
        for all $\alpha\geq0,\ X,Y\in S^n,\ x,y\in\mathbb{R}^n$ satisfying
        $$-K\alpha I_{2n}\leq\begin{pmatrix}X &0 \\0 &-Y \end{pmatrix}\leq K\alpha\begin{pmatrix}I_n &-I_n \\-I_n &I_n \end{pmatrix}$$ with $\alpha=0$ when $x=y$.
    \end{itemize}
\end{itemize}

The notations appearing in the above conditions are explained before Section \ref{sec:geqn}.

The condition (V) is a technical condition which guarantees the comparison principle for \eqref{eq:uepsilon}. For the statement of the principle and its proof, we refer to \cite[Theorem 3.3, Subsection 13.2]{CM14}. Regarding the comparison principle for \eqref{eq:ueffective} (also for \eqref{eq:heffective} as well), the principle holds true as long as the initial datum is uniformly continuous on $\mathbb{R}^n$. We remark that we do not need growth conditions for \eqref{eq:uepsilon} due to the condition (I) of $F$ (or the 1-positive homogeneity of $\overline{F}$ for \eqref{eq:ueffective}). We skip the proof of the comparison principle in this paper as it is standard.

Now we state the main theorem.

\begin{theorem}\label{thm:homogenizationofgeometricequations}
Suppose that a given operator $F:S^n\times (\mathbb{R}^n\setminus\{0\})\times \mathbb{R}^n\to\mathbb{R}$ satisfies the conditions (I)--(V). Suppose that for each $p\in\mathbb{R}^n$, there exists a unique real number $\overline{F}(p)$ such that
\begin{align}
F(D^2v,p+Dv,y)=\overline{F}(p)\qquad\textrm{on }\mathbb{R}^n\label{eq:cellproblem}
\end{align}
admits a $\mathbb{Z}^n$-periodic viscosity solution $v:\mathbb{R}^n\to\mathbb{R}$. Then, homogenization takes place, that is, $u^{\varepsilon}$ converges to $u$ locally uniformly on $\mathbb{R}^n\times[0,\infty)$ as $\varepsilon\to0$, where $u^{\varepsilon}$ and $u$ are the unique viscosity solution to \eqref{eq:uepsilon} and to \eqref{eq:ueffective}, respectively, and $u_0$ represents a given uniformly continuous function on $\mathbb{R}^n$.
\end{theorem}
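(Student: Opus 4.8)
The plan is to run the perturbed test function method of Evans, but to replace the (possibly nonexistent) exact correctors by approximate correctors, which always exist by a standard argument, and to show that the discontinuity at $p=0$ — the obstruction emphasized in \cite{CM14} — is harmless once one only asks for an inequality up to a vanishing error. Concretely, I would first establish uniform bounds: since $F$ is regular in the sense of (V), the solutions $u^\varepsilon$ are uniformly bounded and uniformly continuous (a comparison-principle/barrier argument, using that $u_0$ is uniformly continuous and that constants plus linear-in-$t$ functions are super/subsolutions by the boundedness in (V)(i)). Hence the half-relaxed limits $\overline{u} = \limsup{}^* u^\varepsilon$ and $\underline{u} = \liminf{}_* u^\varepsilon$ are well defined, finite, bounded, and satisfy $\overline u(\cdot,0) = \underline u(\cdot,0) = u_0$. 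By the comparison principle for \eqref{eq:ueffective} (which holds because $\overline F$ is continuous on $\mathbb{R}^n\setminus\{0\}$ and the limit equation is geometric), it suffices to prove that $\overline u$ is a viscosity subsolution and $\underline u$ a viscosity supersolution of \eqref{eq:ueffective}; then $\overline u \le \underline u$, forcing equality and locally uniform convergence.

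**Next**, for the subsolution property, fix a smooth $\phi$ touching $\overline u$ from above at $(x_0,t_0)$ with $t_0>0$, and set $p := D\phi(x_0,t_0)$. The genuinely geometric case is $p\neq 0$ (the case $p=0$ is exactly where discontinuity bites, and is handled separately below). Given $\eta>0$, let $v^\delta$ be a $\delta$-approximate corrector for the slope $p$: a $\mathbb{Z}^n$-periodic Lipschitz (or bounded, by (V)(i)) viscosity solution of $\delta v^\delta + F(D^2 v^\delta, p + D v^\delta, y) = 0$ or, alternatively, the approximate cell problem $F(D^2 v^\delta, p + Dv^\delta, y) = \overline F^\delta(p)$ with $\overline F^\delta(p)\to \overline F(p)$. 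Actually, since the hypothesis of the theorem gives us an \emph{exact} $\mathbb{Z}^n$-periodic corrector $v = v_p$ solving \eqref{eq:cellproblem}, I would use $v_p$ directly and only ``approximate'' in the sense of mollifying/perturbing the test function: form $\phi^\varepsilon(x,t) := \phi(x,t) + \varepsilon\, v_p\!\left(\tfrac{x}{\varepsilon}\right)$, and show $\phi^\varepsilon$ is a viscosity supersolution of \eqref{eq:uepsilon} near $(x_0,t_0)$ up to an error $o(1)$. Here is where care is needed: $v_p$ is only a viscosity, not classical, solution, so one cannot naively add second derivatives; the standard remedy (as in \cite{E92}) is to use the doubling-of-variables / sup-convolution in the $y = x/\varepsilon$ variable to regularize $v_p$, extract the matrix inequality, and invoke the structure conditions (II), (III), (V). The geometric scaling (III) is what lets the $\varepsilon D^2$ curvature term combine correctly with $D^2\phi$ and the rank-one $D v_p \otimes D v_p$ corrections, and continuity of $F$ at the nonzero slope $p + \varepsilon^{-1}Dv_p(\cdot)$... — more precisely at $p + Dv_p$, which stays in a compact subset of $\mathbb{R}^n\setminus\{0\}$ only if $p\neq0$ — delivers $\phi_t(x_0,t_0) + \overline F\!\left(\tfrac{p}{|p|}\right)|p| \le o(1)$, hence $\le 0$.

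**The main obstacle**, and the crux of the paper, is the degenerate case $D\phi(x_0,t_0) = 0$. There the limit equation's left side is $\phi_t(x_0,t_0)$ (by the convention defining geometric viscosity solutions, since $\overline F(\,\cdot/|\cdot|\,)|\cdot|$ extends by $0$ at the origin in the relevant semjet sense), so one must show $\phi_t(x_0,t_0)\le 0$. The difficulty is that there is no corrector for slope $0$, and $F$ itself is undefined/discontinuous at $p=0$, so the perturbed test function cannot be built from a corrector. The way I would handle this is the standard geometric trick: perturb the test function by a small multiple of a generic unit direction, i.e. consider $\phi(x,t) + \theta\, a\cdot(x-x_0)$ for $|a|=1$ and small $\theta>0$, apply the $p\neq0$ argument already established to the slope $\theta a$, obtaining $\phi_t(x_0,t_0) + \overline F(a)\,\theta \le o_\varepsilon(1) + o_\theta(1)$; since this holds for a dense set of directions $a$ and $\overline F$ is bounded (a consequence of (V)(i) via the comparison principle for the cell problem), letting $\theta\to0$ gives $\phi_t(x_0,t_0)\le 0$. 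One must check that the error terms are uniform as the slope shrinks — this is where the regularity hypothesis (V)(ii), which controls $F^* - F_*$ by a modulus independent of the slope, is essential, and it is precisely the point that replaces the stronger ``perturbed corrector'' hypothesis of \cite{CM14}. The supersolution property for $\underline u$ is symmetric. I expect the bookkeeping of the double limit $\varepsilon\to0$ then $\theta\to0$, and verifying the error estimates are genuinely independent of the slope, to be the only technically delicate part; the rest is a careful transcription of \cite{E92,CM14} with ``corrector'' replaced by ``corrector used only through a mollified/perturbed test function.''
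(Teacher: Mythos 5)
Your plan for the core case $p=D\phi(x_0,t_0)\neq 0$ is the classical perturbed test function method with the exact corrector $v_p$, and it contains precisely the gap that \cite{CM14} identified and that this theorem is designed to close. The assertion that the relevant slope ``$p+Dv_p$ stays in a compact subset of $\mathbb{R}^n\setminus\{0\}$'' when $p\neq 0$ is unfounded: $v_p$ is only a continuous $\mathbb{Z}^n$-periodic viscosity solution, with no gradient control whatsoever, so after the doubling-of-variables/Crandall--Ishii step the gradient $q$ at the contact point can vanish or be arbitrarily small even though $p\neq 0$. At such a point the inequality carried by the corrector involves $F_*$ while the inequality carried by $u^{\varepsilon}$ involves $F^*$ (or vice versa), and for a geometric operator these envelopes differ drastically at zero gradient, so the two viscosity inequalities cannot be chained. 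Sup-convolving or mollifying $v_p$ alone does not repair this, and condition (V)(ii) is a comparison-type condition along the specific matrices produced by Crandall--Ishii, not a modulus controlling $F^*-F_*$ at a common vanishing gradient. Locating the ``crux'' at $D\phi(x_0,t_0)=0$ therefore misidentifies the obstruction; also, under the hypothesis the cell problem is solvable for every $p\in\mathbb{R}^n$, including $p=0$, so ``there is no corrector for slope $0$'' is not the issue.

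The paper's proof is organized exactly to avoid this trap, and never inserts $v_p$ into the test function. It uses $v_p$ only to bound the oscillation of the discounted approximate correctors $w_{\lambda}$ and to get $\|(-\lambda w_{\lambda})-\overline F(p)\|_{L^{\infty}}$ small; it then replaces $w_{\lambda}$ by approximate correctors $w_{\lambda,2\eta}$, $w_{\lambda}^{2\eta}$ of the shifted operators $F_{2\eta}$, $F^{2\eta}$ (these exist unconditionally, which is how the perturbed-corrector hypothesis of \cite{CM14} is relaxed), combines them with a sup/inf ball convolution of radius $\varepsilon\eta$ and a quartic penalization whose direction is chosen according to the sign of $\overline F(p)$, and, crucially, uses the contradiction parameter $\theta>0$ together with the bound $F^*(\varepsilon X,q,y)\le c\,|q|$ valid for semijets of ball-convoluted functions (\cite[Lemma 13.1]{CM14}) to prove the separation $|q|\ge \theta/(2c)$ of the contact gradient from the origin; only after this separation is the uniform continuity of $F$ on a compact set away from zero gradient invoked to close the chain of inequalities. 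This separation step (and the $2\eta$ versus $\eta$ interplay between the shifted operator and the convolution radius) is the missing idea in your proposal; without it the argument fails at the very point the theorem addresses, so the proposal as written does not prove the statement.
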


It is noteworthy that Theorem \ref{thm:homogenizationofgeometricequations} applies also to the forced mean curvature flow under assumptions which have not been considered in Subsection \ref{subsec:literature} of literature, as the equation satisfies the conditions (I)--(V). It is also noteworthy that for the cases of nonhomogenization discussed in Subsection \ref{subsec:literature} of literature, there are some directions $p\in\mathbb{R}^n$ for which there does not exist a $\mathbb{Z}^n$-periodic viscosity solution $v:\mathbb{R}^n\to\mathbb{R}$ to \eqref{eq:cellproblem}.

As a corollary, we conclude homogenization of the curvature $G$-equation for the two dimensional cellular flow. Let us first state the main result of \cite[Theorem 1.1]{GLXY22} on the effective burning velocity and the uniform flatness.

\begin{theorem}\label{thm:effectiveburningvelocity}\cite[Theorem 1.1]{GLXY22}
For $\varepsilon>0,\ p\in\mathbb{R}^2$, let $u^{\varepsilon}$ denote the unique viscosity solution to
\begin{align}\label{eq:epsilong}
u^{\varepsilon}_t+\left(1-\varepsilon d\mathrm{div}\left(\frac{Du^{\varepsilon}}{|Du^{\varepsilon}|}\right)\right)_+|Du^{\varepsilon}|+V\left(\frac{x}{\varepsilon}\right)\cdot Du^{\varepsilon}=0, \quad &\text{in}\quad\mathbb{R}^2\times(0,\infty)
\end{align}
with $u^{\varepsilon}(x,0)=p\cdot x,\ x\in\mathbb{R}^2$, where $d>0$, and $V(x)=A(-\cos x_2\sin x_1,\cos x_1\sin x_2)$ is the two dimensional cellular flow with the flow intensity $A>0$ for $x=(x_1,x_2)\in\mathbb{R}^2$. Then, there exists a unique real number $\overline{H}(p)$ such that
\begin{align*}
|u^{\varepsilon}(x,t)-p\cdot x+\overline{H}(p)t|\leq C\varepsilon\qquad\textrm{on }\mathbb{R}^2\times[0,\infty),
\end{align*}
for some constant $C>0$ depending only on $d,A,|p|$. Moreover, the map $p\mapsto\overline{H}(p)$ is a continuous, 1-positive homogeneous function from $\mathbb{R}^2\setminus\{0\}$ to $(0,\infty)$.
\end{theorem}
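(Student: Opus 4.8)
The plan is to reconstruct the game-theoretic argument of \cite{GLXY22}. First I would pass to the microscopic scale: with $w(y,s):=\varepsilon^{-1}u^{\varepsilon}(\varepsilon y,\varepsilon s)$, equation \eqref{eq:epsilong} turns into the $\varepsilon$-free equation $w_{s}+\left(1-d\,\mathrm{div}\left(\frac{Dw}{|Dw|}\right)\right)_{+}|Dw|+V(y)\cdot Dw=0$ with $w(y,0)=p\cdot y$, so that the claimed bound $|u^{\varepsilon}(x,t)-p\cdot x+\overline H(p)t|\le C\varepsilon$ is equivalent to the $\varepsilon$-independent estimate $|w(y,s)-p\cdot y+\overline H(p)s|\le C$ on $\mathbb{R}^{2}\times[0,\infty)$. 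I would obtain the latter by trapping $w$ between the two global barriers $p\cdot y-\overline H(p)s\pm C$; since this operator is geometric and degenerate parabolic, well-posedness and comparison for \eqref{eq:epsilong} with linear initial data are standard once conditions (I)--(V) are verified for the present $F$, so it suffices to construct a $\mathbb{Z}^{2}$-periodic viscosity supersolution $v^{+}$ and subsolution $v^{-}$, both \emph{bounded}, of the cell problem $\left(1-d\,\mathrm{div}\left(\frac{p+Dv}{|p+Dv|}\right)\right)_{+}|p+Dv|+V\cdot(p+Dv)=\overline H(p)$ on $\mathbb{R}^{2}$: then $p\cdot y-\overline H(p)s+v^{\pm}(y)$ are super/subsolutions of the rescaled equation whose initial traces differ from $p\cdot y$ by at most $\|v^{\pm}\|_{\infty}$, and comparison closes the estimate. (The same bounded corrector also feeds Theorem~\ref{thm:homogenizationofgeometricequations} to yield qualitative homogenization, but the rate follows directly from comparison.)

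The whole weight of the proof therefore falls on producing correctors whose \emph{oscillation is bounded by a constant depending only on $d$, $A$, $|p|$}. I would first solve the discounted cell problems $\delta v^{\delta}+\left(1-d\,\mathrm{div}\left(\frac{p+Dv^{\delta}}{|p+Dv^{\delta}|}\right)\right)_{+}|p+Dv^{\delta}|+V\cdot(p+Dv^{\delta})=0$ on $\mathbb{T}^{2}$ by Perron's method with comparison, and then prove the a priori bound $\mathrm{osc}_{\mathbb{T}^{2}}\,v^{\delta}\le C(d,A,|p|)$ uniformly in $\delta\in(0,1]$. Letting $\delta\to0$ along a subsequence gives $-\delta v^{\delta}\to\overline H(p)$ (a constant, by the oscillation bound) and a bounded corrector, and a standard argument shows $\overline H(p)$ is the unique constant admitting a periodic solution, producing $v^{\pm}$. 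The a priori bound is exactly where the game interpretation of \cite{KS06} as used in \cite{GLXY22} enters: $v^{\delta}$ is the value of a two-player game in which Player~II perturbs the trajectory tangentially — encoding the curvature term — while Player~I chooses a unit direction each round, encoding $|p+Dv|$ together with the drift $V$. The decisive observation is that Player~I can follow a \emph{separatrix} of the cellular flow $V(y)=A(-\cos y_{2}\sin y_{1},\cos y_{1}\sin y_{2})$, which bounds a single cell $[0,\pi]^{2}$ up to translation: along such a streamline Player~II's tangential perturbations are flow-invariant and hence neutralized, and Player~I returns to the point it started from after one traversal, so the minimum principle on this closed curve controls $v^{\delta}$ from one side while the displacement of $p\cdot y$ accrued over the loop is $O(|p|)$; the opposite inequality comes from running the argument on the complementary cell. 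Patching these bounds over the finitely many cells of a period yields the global oscillation estimate.

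Granting existence of $\overline H(p)$, its three qualitative properties come relatively cheaply. Positive homogeneity of degree one is the scaling already built into property (III) with $\mu=0$, i.e.\ $F(\lambda X,\lambda q,y)=\lambda F(X,q,y)$: if $v$ solves the cell problem at $p$ with constant $\overline H(p)$, then $\lambda v$ solves it at $\lambda p$ with constant $\lambda\overline H(p)$, so $\overline H(\lambda p)=\lambda\overline H(p)$ for $\lambda>0$ and $\overline H$ is determined by its values on the unit circle. Continuity there — hence on all of $\mathbb{R}^{2}\setminus\{0\}$ — follows from stability of viscosity solutions plus the fact that the oscillation bound $C(d,A,|p|)$ is locally uniform in $p$: for $p_{k}\to p$ the normalized correctors converge along a subsequence and force $\overline H(p_{k})\to\overline H(p)$. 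Strict positivity $\overline H(p)>0$ I would extract from the game lower bound of the previous step, where riding the separatrix for one period yields a strictly positive net advance of the front thanks to the always-present laminar speed $1$ in $(1-d\kappa)_{+}$.

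The step I expect to be genuinely hard is the uniform oscillation bound on $v^{\delta}$. The operator is noncoercive — the flow intensity $A$ may far exceed the laminar speed $1$ — and it is discontinuous at $p+Dv=0$, so neither Lipschitz nor Bernstein-type a priori estimates are available, and one must exploit the very special streamline geometry of the two-dimensional cellular flow, in particular that Player~I can close a loop along a separatrix so that the minimum principle applies; obtaining $\overline H(p)>0$ for arbitrarily large $A$ sits inside this same difficulty. This argument is fragile under perturbation of the equation — Player~II gains extra moves and the loop no longer closes — which is precisely the obstruction that the present note sidesteps elsewhere by replacing perturbed correctors with perturbed approximate correctors.
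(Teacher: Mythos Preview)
The paper does not prove this theorem: it is quoted as \cite[Theorem 1.1]{GLXY22} and used as an input (via Proposition~\ref{prop:correctorofgeqn}) to derive Corollary~\ref{cor:homogenizationofgeqn}. There is therefore no ``paper's own proof'' to compare against. Your outline is a faithful high-level reconstruction of the \cite{GLXY22} strategy as summarized in Subsection~\ref{subsec:literature}: the Kohn--Serfaty game \cite{KS06}, Player~I riding a flow-invariant curve of the cellular flow so that Player~II's tangential moves are nullified and the trajectory closes, and then the minimum principle on that closed loop to bound the approximate correctors $v^{\delta}$ uniformly in $\delta$.

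One point where your sketch and the actual chain of results diverge: you write that letting $\delta\to0$ along a subsequence ``gives \ldots\ a bounded corrector''. With only an oscillation bound and no equicontinuity (unavailable here by noncoercivity), subsequential pointwise convergence of $v^{\delta}-v^{\delta}(0)$ to a viscosity solution of the cell problem is not automatic. In fact the route taken is the one recorded in Proposition~\ref{prop:correctorofgeqn} of the present paper: the uniform flatness of Theorem~\ref{thm:effectiveburningvelocity} (equivalently \cite[Corollary 3.3]{GLXY22}) yields bounded periodic sub- and supersolutions of the cell problem, and a genuine corrector is then produced by Perron's method rather than by compactness of the $v^{\delta}$. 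So your plan is correct in spirit but slightly misallocates the step from ``bounded approximate correctors'' to ``bounded corrector'': that step belongs to this paper, not to \cite{GLXY22}, and it is carried out via Perron, not via a limit of $v^{\delta}$.
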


As mentioned in \cite[Section 4]{GLXY22}, the existence of the effective burning velocities $\overline{H}(p)$ such that the above conclusion holds can be extended to more general two dimensional incompressible flows.


Owing to Theorem \ref{thm:homogenizationofgeometricequations}, homogenization of the curvature $G$-equation with the two dimensional cellular flow (as well as general two dimensional incompressible flows) follows from the fact that this uniform flatness result, Theorem \ref{thm:effectiveburningvelocity}, implies the existence of correctors with $\overline{F}(p)=\overline{H}(p)$ in \eqref{eq:cellproblem}, which is proved in Proposition \ref{prop:correctorofgeqn}. It is worth mentioning
here that a priori it is not clear to see that $\overline{H}(p)$ in Theorem \ref{thm:effectiveburningvelocity} coincides with the associated effective operator $\overline{F}(p)$.

\begin{corollary}\label{cor:homogenizationofgeqn}
For $\varepsilon>0$ and a uniformly continuous function $u_0$ on $\mathbb{R}^2$, let $u^{\varepsilon}$ be the unique viscosity solution to \eqref{eq:epsilong}
with $u^{\varepsilon}(\cdot,0)=u_0$. Let $\overline{H}=\overline{H}(p)$ be as in Theorem \ref{thm:effectiveburningvelocity} with $\overline{H}(0):=0$. Then, as $\varepsilon\to0$, the solution $u^{\varepsilon}$ converges locally uniformly on $\mathbb{R}^2\times[0,\infty)$ to the unique viscosity solution $u$ to
\begin{align}\label{eq:heffective}
\begin{cases}
u_t+\overline{H}(Du)=0, \quad &\text{in}\quad\mathbb{R}^2\times(0,\infty),\\
u(\cdot,0)=u_0, \quad &\text{on}\quad\mathbb{R}^2.
\end{cases}
\end{align}
\end{corollary}


We have stated the qualitative conclusion of homogenization deriving only from the the solvability of the cell problems, which is naturally expected but technically highly nontrivial. For mean curvature motions with a coercive forcing term satisfying \eqref{assumption:coercivity}, their perturbed forces also satisfy \eqref{assumption:coercivity} as stated in Proposition \ref{prop:correctors}, which we cannot expect in general (the curvature $G$-equation, for instance). We utilize this fact to obtain a rate of periodic homogenization of mean curvature motions with a coercive forcing term. For a Lipschitz function $f:\mathbb{R}^n\to\mathbb{R}^m$, we let $\|f\|_{C^{0,1}(\mathbb{R}^n)}:=\|f\|_{L^{\infty}(\mathbb{R}^n)}+\|Df\|_{L^{\infty}(\mathbb{R}^n)}$, where $Df$ is the Jacobian. Let $\hat{p}=\frac{p}{|p|}$ for $p\in\mathbb{R}^n\setminus\{0\}$.

\medskip

\begin{theorem}\label{thm:rateofforcedmcf}
Let
\begin{align*}
F(X,p,y)=-\mathrm{tr}\left\{\left(I_n-\widehat{p}\otimes\widehat{p}\right)X\right\}-c(y)|p|,
\end{align*}
for $(X,p,y)\in S^n\times\left(\mathbb{R}^n\setminus\{0\}\right)\times\mathbb{R}^n$, and suppose that a forcing term $c$ is Lipschitz continuous, $\mathbb{Z}^n$-periodic and satisfies \eqref{assumption:coercivity}. Let $u_0$ be a function on $\mathbb{R}^n$ with $\|Du_0\|_{C^{0,1}(\mathbb{R}^n)}<\infty$. For $\varepsilon>0$ let $u^{\varepsilon}$ denote the unique viscosity solution to \eqref{eq:uepsilon}, and $u$ denote the unique viscosity solution to \eqref{eq:ueffective}. Then, there exists a constant $C>0$ depending only on $n,\|c\|_{C^{0,1}\left(\mathbb{R}^n\right)},\|Du_0\|_{C^{0,1}\left(\mathbb{R}^n\right)},\delta$ such that
\begin{align*}
\|u^{\varepsilon}-u\|_{L^{\infty}\left(\mathbb{R}^n\times[0,T]\right)}\leq C(1+T)\varepsilon^{1/8}
\end{align*}
for all $T>0$, $\varepsilon\in(0,1)$.
\end{theorem}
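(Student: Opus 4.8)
The plan is to combine the qualitative homogenization from Theorem \ref{thm:homogenizationofgeometricequations} with a quantitative argument that exploits the fact, recorded in Proposition \ref{prop:correctors}, that the \emph{perturbed} operators for the forced mean curvature flow remain coercive (i.e.\ the perturbed force still satisfies \eqref{assumption:coercivity}, with a possibly worse but controlled $\delta$). Concretely, for $p \in \R^n \setminus \{0\}$ and small parameters $\lambda, \eta > 0$ I would consider the perturbed cell problem
\begin{align*}
\lambda v^{\lambda}_{\eta} + F(D^2 v^{\lambda}_{\eta} + \eta \, \text{(elliptic term)}, p + D v^{\lambda}_{\eta}, y) = 0 \quad \text{on } \R^n,
\end{align*}
whose solutions, by \cite{LS05,DKY08} applied to the perturbed force, enjoy uniform Lipschitz bounds $\|Dv^{\lambda}_{\eta}\|_{L^\infty} \le C$ and the approximation $|\lambda v^{\lambda}_{\eta}(y) + \overline{F}(\hat p)|p| | \le C\lambda^{1/2}$ or similar, with $C$ depending only on $n, \|c\|_{C^{0,1}}, \|Du_0\|_{C^{0,1}}, \delta$. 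The first step is therefore to quantify, in terms of $\lambda$ and $\eta$, how well $\lambda v^{\lambda}_{\eta}$ approximates the effective constant; this is where the coercivity of the perturbed problem is essential, since it gives Lipschitz regularity uniform in the perturbation, which is exactly what fails for the curvature $G$-equation.

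The second step is a standard doubling-of-variables / perturbed test function comparison at the quantitative level. I would fix $(x_0, t_0)$ and test $u^{\varepsilon}$ against $\phi(x) + \varepsilon v^{\lambda}_{\eta}(x/\varepsilon)$ where $\phi$ is a smooth approximation to $u$ near $(x_0,t_0)$, then run the comparison principle for \eqref{eq:uepsilon} keeping careful track of all error terms: the corrector defect $O(\lambda^{1/2})$ (after multiplying by $t_0$), the term $\lambda \varepsilon v^{\lambda}_{\eta} = O(\varepsilon)$ coming from the $\lambda v^\lambda_\eta$ term, the second-order consistency error of order $\varepsilon / \eta^{?}$ from replacing $\varepsilon D^2 u^\varepsilon$ by the frozen Hessian (controlled using the geometric structure and the regularity condition (V)(ii)), and the regularity of $u$ itself, which is Lipschitz in space and $\tfrac12$-Hölder in time by \eqref{eq:ueffective} being a first-order geometric equation with Lipschitz $\overline{F}(\hat\cdot)|\cdot|$. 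Collecting these gives a bound of the shape
\begin{align*}
\|u^{\varepsilon} - u\|_{L^\infty(\R^n \times [0,T])} \le C(1+T)\Bigl( \lambda^{1/2} + \varepsilon \lambda^{-1} + \text{(smoothing error in }\eta\text{)} + \eta \Bigr),
\end{align*}
and the exponent $1/8$ in the statement signals that one optimizes over two nested scales (first $\eta$ against $\varepsilon$, then $\lambda$ against the result), each optimization costing a square root, so $\varepsilon \mapsto \varepsilon^{1/2} \mapsto \varepsilon^{1/4}$ and a further loss from the geometric (degenerate) nature of the second-order term down to $\varepsilon^{1/8}$.

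The main obstacle I anticipate is controlling the second-order consistency error without any coercivity in the gradient: because $F$ is only degenerate elliptic and geometric, one cannot absorb the Hessian error term into a uniformly elliptic regularization, so the frozen-Hessian replacement has to be handled through the sup/inf-convolution regularization of $u^{\varepsilon}$ (as in \cite[Lemma 13.1]{CM14}) together with the regularity modulus $\omega$ in (V)(ii). Tracking how the convolution radius interacts with $\varepsilon$, $\lambda$ and the corrector's Lipschitz constant, and making sure the geometric scaling (III) is used correctly so that the test function $\phi + \varepsilon v^{\lambda}_\eta(\cdot/\varepsilon)$ has nonvanishing gradient where it matters (or else the evaluation is moved to a point where it does), is the delicate bookkeeping that produces the somewhat lossy $\varepsilon^{1/8}$ rate rather than the $\varepsilon^{1/2}$ available in the laminated case \cite{J23}. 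Everything else — existence, uniqueness, and comparison for \eqref{eq:uepsilon} and \eqref{eq:ueffective}, and the Lipschitz-in-space / Hölder-in-time bounds on $u$ — is classical viscosity solution theory and I would cite it rather than reprove it.
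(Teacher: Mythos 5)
Your overall strategy is the right one, and it is the same family of argument the paper uses: the Capuzzo-Dolcetta--Ishii framework \cite{C-DI01} with perturbed approximate correctors whose Lipschitz bounds and defect estimates come from the fact that the perturbed force still satisfies \eqref{assumption:coercivity} (Proposition \ref{prop:correctors}, via \cite{LS05}), combined with sup/inf-convolutions in the spirit of \cite[Lemma 13.1]{CM14} to cope with the discontinuity of $F$ at $p=0$. However, the proposal has a genuine gap exactly at the step that makes this scheme work for a geometric operator and that produces the exponent $1/8$: you never supply a mechanism for \emph{quantitatively} bounding the gradient of the test configuration away from the origin at the contact point. Saying that the evaluation ``is moved to a point where the gradient does not vanish'' is not enough: without a lower bound of the form $|p|\gtrsim K_1\varepsilon^{2\beta}$ at the touching point, the modulus of continuity of $F$ in the $p$-slot (which degenerates like $\|X\|\,|\Delta p|/|p|$ because of the $\widehat p\otimes\widehat p$ structure) gives error terms with uncontrolled constants, and no rate follows. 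In the paper this is achieved by adding the penalization $K(t+s)$ with $K=K_1\varepsilon^{\beta}$, showing by contradiction (for $K_1$ large, depending only on the data) that the doubled maximum must occur at $\hat t=0$ or $\hat s=0$, and, inside that contradiction argument, extracting the gradient separation by playing the effective supersolution inequality against the approximate-corrector equation together with the bound $F^*(\varepsilon X,q,\cdot)\le c|q|$ valid for sup-convoluted functions. This is the crux, not bookkeeping, and it is also what fixes $\beta=1/8$: the competing terms are $K_1\varepsilon^{\beta}$ versus an error of size $\varepsilon^{-\beta}\cdot\varepsilon^{-2\beta}\cdot\varepsilon^{\min\{1/2,\,1-\theta-\beta\}}$ coming from dividing by the gradient lower bound, so the exponent is derived, not obtained by ``two nested square roots plus a further loss.''

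Two further points where your sketch deviates in ways that would need repair. First, your perturbed cell problem adds an $\eta$-elliptic term in the Hessian slot, whereas what is actually needed is the spatial perturbation $F^{\eta},F_{\eta}$ (sup/inf of $c$ over $\eta$-balls in the fast variable): its role is to absorb the $O(\varepsilon\eta)$ spatial shift produced by the sup/inf-convolution of $u^{\varepsilon}$, so that the two viscosity inequalities can be chained at \emph{different} base points with a one-sided sign; an elliptic regularization does not serve this purpose, and your claimed corrector defect $O(\lambda^{1/2})$ should be the sharper $C|p|(\lambda+\eta)$ of \eqref{line:correctors}. Second, testing against a single frozen $\phi+\varepsilon v^{\lambda}_{\eta}(\cdot/\varepsilon)$ near a point ignores the dependence of the corrector on the slope; the paper handles this by a triple doubling in space (the extra variable $z$, with the corrector evaluated at the slope $(z-y)/\varepsilon^{\beta}$) together with the Lipschitz estimate $|v^{\lambda,\eta}(x,p)-v^{\lambda,\eta}(x,q)|\le C\lambda^{-1}|p-q|$, and this interaction contributes to the exponent budget as well. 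As written, your proposal identifies the correct ingredients but does not contain the argument that closes them into a rate.
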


The optimal rate is of natural interest, and as far as the author knows, the optimal rate has not been established even under the coercivity condition \eqref{assumption:coercivity}.

The following example shows that the optimal rate is slower than $O(\varepsilon)$. The forcing term is a constant in Proposition \ref{prop:example} (as well as in Proposition \ref{prop:example2}), and this is a vanishing viscosity problem rather than a homogenization problem as there is no oscillation effect. The slowing effect in Proposition \ref{prop:example} comes purely from the curvature as well.

\medskip

\begin{proposition}\label{prop:example}
Let $n=2$, $c(x)\equiv1$, and let $u_0(x)=-|x|$. Let $F,u^{\varepsilon},u$ be as in the statement of Theorem \ref{thm:rateofforcedmcf}. Then, for any $\varepsilon>0$ and for any $(x,t)\in\mathbb{R}^2\times[0,\infty)$ with $|x|=t>\varepsilon(1+e^{-1})$, we have
\begin{align}
\left|u^{\varepsilon}(x,t)-u(x,t)\right|\geq \frac{1}{2}\varepsilon\left(\log\left(\frac{t}{\varepsilon}-1\right)+1\right).
\end{align}
\end{proposition}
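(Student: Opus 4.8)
The plan is to compute both sides explicitly using the geometric meaning of the two flows when the initial datum is $u_0(x) = -|x|$ and $c \equiv 1$. First I would handle the effective equation: since $c\equiv 1$ is constant, $\overline{F}$ is simply $\overline{F}(\hat p) = -1$, so \eqref{eq:ueffective} becomes $u_t = |Du|$, whose level sets move outward with unit normal speed (no curvature term). Starting from $u_0 = -|x|$, the zero level set is the origin, and the solution is radial; by the Lax--Oleinik / distance-function representation one gets $u(x,t) = -|x| + t$ for all $(x,t)$, in particular $u(x,t) = 0$ exactly on $|x| = t$. Next I would analyze $u^{\varepsilon}$. Rescaling $y = x/\varepsilon$, $s = t/\varepsilon$ and $w(y,s) = u^{\varepsilon}(\varepsilon y, \varepsilon s)/\varepsilon$, equation \eqref{eq:uepsilon} becomes the \emph{unscaled} forced mean curvature flow $w_s - \mathrm{tr}\{(I - \hat q \otimes \hat q)D^2 w\} - |Dw| = 0$ with $w(y,0) = -|y|$; here the operator no longer carries $\varepsilon$. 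So $u^{\varepsilon}(x,t) = \varepsilon\, w(x/\varepsilon, t/\varepsilon)$ and the whole question reduces to understanding a single, $\varepsilon$-independent flow $w$ at large times and large radii.

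The heart of the matter is then the following geometric fact about the unscaled flow $V = \kappa + 1$ (curvature plus unit forcing) started from a point: the zero level set $\Gamma_s = \{w(\cdot,s) = 0\}$ is a circle of radius $r(s)$, and $r$ solves the ODE $\dot r = 1 - \tfrac{1}{r}$ (mean curvature of a circle of radius $r$ in the plane is $-1/r$ with the sign convention that makes convex sets shrink). For $w$ we actually care about the evolution of circular level sets $\{w = \lambda\}$, but by the radial symmetry and the comparison principle the signed distance structure is preserved: $w(y,s) = \phi(|y|, s)$ where $\phi(\cdot,s)$ is decreasing in the radial variable and $\{\phi(\cdot,s) = 0\}$ is at radius $r(s)$ with $\dot r = 1 - 1/r$, $r(0) = 0$. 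Solving this ODE: separating variables, $s = \int_0^{r} \frac{\rho}{\rho - 1}\,d\rho = r + \log\frac{1}{|1-r|}$-type expression — more precisely for $r > 1$ one finds $s = (r-1) + \log(r-1) + C$ after fixing the initial behavior, so that $r(s) = s - \log(r(s)-1) + O(1)$. The key quantitative consequence: when the circle has reached radius $R$, the time elapsed is $R - \log(R-1) - (\text{const})$, i.e. it is \emph{ahead} of the pure-transport front (which would reach $R$ at time $R$) by an amount $\approx \log(R-1)$. Undoing the rescaling: at time $t$ and radius $|x| = t$, the rescaled radius is $R = t/\varepsilon$, the rescaled time is $s = t/\varepsilon$, and the front $\Gamma_s$ has already passed radius $R$; the gap between where $w$ vanishes and where the transport solution $-|y| + s$ vanishes is $\gtrsim \log(R - 1) = \log(t/\varepsilon - 1)$. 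Translating through $|Dw|$ being of order one near the front and multiplying back by $\varepsilon$ yields $|u^{\varepsilon}(x,t) - u(x,t)| \gtrsim \varepsilon\log(t/\varepsilon - 1)$, with the explicit constant $\tfrac12$ and the $+1$ coming from a careful bookkeeping of the ODE constant and the condition $t > \varepsilon(1 + e^{-1})$ ensuring $r - 1 > e^{-1}$ so the logarithm is controlled from below.

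The main obstacle, I expect, is making the reduction to the circular ODE fully rigorous at the level of viscosity solutions rather than classical geometric flows: one must justify that the radial symmetry and the ordering of level sets are preserved (a comparison-principle argument, comparing $u^{\varepsilon}$ with radial sub/supersolutions built from the ODE $\dot r = 1 - 1/r$), and that the circular front remains smooth and does not develop singularities for $r > 0$ — which is clear here since circles stay circles under this flow and $\dot r > 0$ once $r > 1$, so there is no pinching. A secondary technical point is converting "the front is ahead by $\log(t/\varepsilon-1)$" into a pointwise lower bound on $|u^{\varepsilon} - u|$ at the specific point $x$ with $|x| = t$: this uses that $u(x,t) = 0$ there while $u^{\varepsilon}(x,t) > 0$ because $x$ is strictly inside $\{u^{\varepsilon}(\cdot,t) > 0\}$, together with a lower bound on how negative $u^{\varepsilon}$'s gradient is — but since $u^{\varepsilon}$ rescales to $w$ which is $1$-Lipschitz-like in the radial direction away from the tip, one gets $u^{\varepsilon}(x,t) \geq \varepsilon \cdot (\text{rescaled gap}) \geq \tfrac12 \varepsilon(\log(t/\varepsilon - 1) + 1)$ directly. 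I would keep the constants honest by tracking the ODE solution exactly rather than asymptotically.
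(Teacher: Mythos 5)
Your overall mechanism (radial symmetry, the ODE $\dot r = 1-\frac1r$ for the level circles, and a logarithmic correction of size $\varepsilon\log(t/\varepsilon-1)$) is the right one and is close in spirit to the paper's proof, but as written the argument contains a genuine error of direction and leaves the decisive quantitative step undone. From your own integral $s=\int \frac{\rho}{\rho-1}\,d\rho = r+\log(r-1)+C$, a level circle reaches radius $R$ at time $R+\log(R-1)+C>R$: the curvature term makes a convex expanding front move with speed $1-\frac1r<1$, so the $\varepsilon$-front \emph{lags} the pure-transport front rather than leading it. Your statement ``the time elapsed is $R-\log(R-1)-\mathrm{const}$'' flips this sign, and it propagates into the false conclusions that $\Gamma_s$ ``has already passed radius $R$'' and that $u^{\varepsilon}(x,t)>0$ at $|x|=t$; in fact $u^{\varepsilon}(x,t)<0=u(x,t)$ there. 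Relatedly, the central geometric claim is not available: with $w(\cdot,0)=-|y|$ the zero level set is a single point, the ODE $\dot r=1-\frac1r$ cannot be started at $r(0)=0$ (circles of radius less than $1$ shrink and vanish), and indeed $\{w(\cdot,s)=0\}=\emptyset$ for $s>0$. The correct object is the level set at the (negative) value one is solving for, or better the value itself.

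The remaining step you defer to ``careful bookkeeping'' --- converting a front displacement into the pointwise bound with the constant $\frac12$ and the $+1$, via ``$|Dw|$ of order one near the front'' --- is precisely where the content of the proposition lies, and it is not carried out. The paper bypasses both difficulties: by radial symmetry, $u^{\varepsilon}(x,t)=\varphi^{\varepsilon}(r,t)$ solves the one-dimensional convex equation $\varphi^{\varepsilon}_t-\frac{\varepsilon}{r}\varphi^{\varepsilon}_r-|\varphi^{\varepsilon}_r|=0$, whose optimal control representation gives, for $r\geq t$, the exact value $\varphi^{\varepsilon}(r,t)=-\varepsilon\xi_1(t/\varepsilon)$ where $\dot\xi_1=-1+\frac{1}{\xi_1}$, $\xi_1(0)=\frac r\varepsilon$ (the inward-optimal trajectory, i.e. the time-reversed version of your ODE). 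This trajectory is solved in closed form through the Lambert $W$ function, and the elementary inequality $W(z)\geq\frac12\log z$, together with the hypothesis $t>\varepsilon(1+e^{-1})$, yields exactly $u^{\varepsilon}(x,t)\leq-\frac12\varepsilon\bigl(\log(\frac t\varepsilon-1)+1\bigr)$ while $u(x,t)=0$ on $|x|=t$. To repair your proposal you would need to (i) track the value-level circles (not the zero level set) with the correct sign of the deviation, and (ii) replace the gradient heuristic by an exact computation of the value, which in effect reproduces the paper's control-formula argument.
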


In the next, we present examples of traveling graphs with prescribed asymptotics when a forcing term is a positive constant, as they demonstrate that homogenization rate is related to the stability of the traveling waves if we start with 1-positively homogeneous initial data. The traveling graphs are known as the ``V-shaped traveling fronts" \cite{NT05} in 2-d, and they are studied in \cite{MRR-M13} in arbitrary spatial dimensions.

\medskip

\begin{proposition}\label{prop:example2}
Let $n\geq2,\ \alpha\in\left(0,\frac{\pi}{2}\right],\ c(x)\equiv1$. Let $A$ be a nonempty finite subset of the sphere $\mathbb{S}^{n-2}$, and for $\nu\in A$, let $(\nu,0)$ denote the unit vector in $\mathbb{R}^n$ whose first $(n-1)$-component is $\nu$ and last component is 0. Let $$u_0(x)=\sup_{\nu\in A}\left\{(\cot \alpha) x\cdot(\nu,0)\right\}.$$ Let $F,u^{\varepsilon},u$ be as in the statement of Theorem \ref{thm:rateofforcedmcf}. Then, there exists a constant $C>0$ depending only on $\alpha,|A|$ such that $$\|u^{\varepsilon}-u\|_{L^{\infty}(\mathbb{R}^{n}\times[0,\infty))}\leq C\varepsilon$$ for all $\varepsilon>0$.
\end{proposition}

We can ask whether the rate $O(\varepsilon)$ is optimal or not in laminated media when a forcing term satisfies \eqref{assumption:coercivity}. A contrasting case in general media is the example in Proposition \ref{prop:example}.

\subsection*{Organization of the paper}
We prove Theorem \ref{thm:homogenizationofgeometricequations} and Corollary \ref{cor:homogenizationofgeqn} in Section \ref{sec:geqn}, and we prove Theorem \ref{thm:rateofforcedmcf} and Propositions \ref{prop:example}, \ref{prop:example2} in Section \ref{sec:forcedmcf}.

\subsection*{Notations and conventions}
For each $n\geq1$, we set and use the following notations throughout the paper.
\begin{itemize}
    \item[$\cdot$] $x_+=\max\left\{x,0\right\}$ for $x\in\mathbb{R}$.
    \item[$\cdot$] $\hat{p}=\frac{p}{|p|}$ for $p\in\mathbb{R}^n\setminus\{0\}$.
    \item[$\cdot$] $\langle p\rangle=\sqrt{1+|p|^2}$ for $p\in\mathbb{R}^n$.
    \item[$\cdot$] $S^n$ : the set of $n\times n$ symmetric matrices, for each $n\geq1$.
    \item[$\cdot$] $I_n$ : the $n\times n$ identity matrix.
    \item[$\cdot$] $p\otimes p$ : the matrix $\left(p^ip^j\right)^n_{i,j=1}$ for $p=(p^1,\cdots,p^n)\in\mathbb{R}^n$.
    \item[$\cdot$] $\text{tr}\left\{A\right\}$ : the trace of a square matrix $A$.
    \item[$\cdot$]
    $\|A\|=\sup_{v\in\mathbb{R}^n:|v|=1}|(Av)\cdot v)|$ for each $n\times n$ matrix $A$.
    \item[$\cdot$] $B_r(x)=\left\{y\in\mathbb{R}^n:|y-x|<r\right\}$ for $x\in\mathbb{R}^n,\ r>0$.
    \item[$\cdot$] $Q_r(P)=B_r(x)\times\left((t-r,t+r)\cap[0,\infty)\right)$ for $P=(x,t)\in\mathbb{R}^n\times[0,\infty),\ r>0$.
    \item[$\cdot$] $\overline{Q}_r(P)$ : the closure of $Q_r(P)$ in $\mathbb{R}^n\times[0,\infty)$ for $P\in\mathbb{R}^n\times[0,\infty),\ r>0$.
    \item[$\cdot$] $\|f\|_{C^{0,1}(\mathbb{R}^n)}=\|f\|_{L^{\infty}(\mathbb{R}^n)}+\|Df\|_{L^{\infty}(\mathbb{R}^n)}$ for a Lipschitz function $f:\mathbb{R}^n\to\mathbb{R}^m$ for $m\geq1$, where $Df$ denotes the Jacobian. 
    \item[$\cdot$] $F^*$ and $F_*$ : the upper and lower-semicontinuous envelope of $F:S^n\times (\mathbb{R}^n\setminus\{0\})\times \mathbb{R}^n\to\mathbb{R}$, respectively.
\end{itemize}

We follow the convention throughout the paper that a number $C=C(\cdot)>0$ denotes a positive constant that may vary line by line, and that its dependency on parameters (such as, $\varepsilon,\eta,\mu,r,\cdots$) is specified in its arguments. Specifying the dependency in the arguments is also applied to various parameters that appear in this paper, not just to $C>0$.

\section{Proof of Theorem \ref{thm:homogenizationofgeometricequations}}\label{sec:geqn}
This section is mainly devoted to the proof of Theorem \ref{thm:homogenizationofgeometricequations}.

Let $F:S^n\times\left(\mathbb{R}^n\setminus\{0\}\right)\times\mathbb{R}^n\to\mathbb{R}$ be given. Throughout this section, we let
\begin{equation}
\begin{cases}
F^{\eta}(X,p,y):=\inf_{|e|\leq\eta}F(X,p,y+e), \quad & \\
F_{\eta}(X,p,y):=\sup_{|e|\leq\eta}F(X,p,y+e) \quad & 
\end{cases}\label{line:perturbedoperators}
\end{equation}
for $(X,p,y)\in S^n\times\left(\mathbb{R}^n\setminus\{0\}\right)\times\mathbb{R}^n$ and $\eta\geq0$.

\begin{proof}[Proof of Theorem \ref{thm:homogenizationofgeometricequations}]

$\ \ $

\medskip

We first of all note that there are sub/supersolutions $u^{\mp}$ to \eqref{eq:uepsilon}, which are independent of $\varepsilon\in(0,1]$ (see \cite[Lemma 4.3.4, Theorem 4.3.1]{G06}), such that
$$
u^-\leq u^+,\quad\textrm{and}\qquad\lim_{t\to0}\sup_{x\in\mathbb{R}^n}|u^{\mp}(x,t)-u_0(x)|=0,
$$
which follows from the conditions (I), (II), (III) and (i) of (V). By Perron's method, there exists a viscosity solution $u^{\varepsilon}$ to \eqref{eq:uepsilon}. By the comparison principle for \eqref{eq:uepsilon} (see \cite[Subsection 13.2]{CM14}), we see that $u^-\leq u^{\varepsilon}\leq u^+$, and that $u^{\varepsilon}$ is a unique solution and it is continuous.

Let $\overline{u}:=\limsup_{\varepsilon\to0}^*u^{\varepsilon}$ and $\underline{u}:=\liminf_{\varepsilon\to0\ *}u^{\varepsilon}$. Then, we have that, by the definition of $\underline{u},\overline{u}$,
$$
u^-\leq\underline{u}\leq\overline{u}\leq u^+,
$$
and thus that $\underline{u}(\cdot,0)=\overline{u}(\cdot,0)=u_0$. Therefore, it suffices to prove that $\overline{u}$ is a viscosity subsolution to \eqref{eq:ueffective} in $\mathbb{R}^n\times(0,\infty)$, and that $\underline{u}$ is a viscosity supersolution to \eqref{eq:ueffective} in $\mathbb{R}^n\times(0,\infty)$. Then, the comparison principle for \eqref{eq:ueffective} implies that $\overline{u}\leq\underline{u}$, which then implies the local uniform convergence of $u^{\varepsilon}$ to $\overline{u}=\underline{u}(=:u)$ on $\mathbb{R}^n\times[0,\infty)$.

\medskip

\textbf{Claim 1: The function $\underline{u}$ is a viscosity supersolution to \eqref{eq:ueffective} in $\mathbb{R}^n\times(0,\infty)$.} 

\medskip

\textbf{Step 1.1: Introduce parameters $r,\theta>0$ from the assumption for the contrary.}

\medskip

Suppose the contrary for contradiction. Then, there exist $P_0=(x_0,t_0)\in\mathbb{R}^n\times(0,\infty)$, $r\in(0,t_0)$ and a $C^2$ function $\varphi$ in $Q_r(P_0)$ such that
\begin{equation*}
\begin{cases}
\underline{u}(P_0)=\varphi(P_0), \quad &\\
\underline{u}\geq\varphi \quad &\text{on}\quad \overline{Q}_r(P_0),\\
\varphi_t(P_0)+\overline{F}(D\varphi(P_0))=:-\theta<0.\quad&
\end{cases}
\end{equation*}
Let $p=D\varphi(P_0),\ \overline{\lambda}=\overline{F}(p),\ \lambda_t=\varphi_t(P_0)$ so that we have
\begin{align}
\overline{\lambda}+\theta=-\lambda_t.\label{line:overlinelambda}
\end{align}

By replacing $\varphi$ by $-|(x,t)-P_0|^4+\varphi$ if necessary, we can assume without loss of generality that there exists $\delta_1=\delta_1(r)>0$ such that
\begin{equation*}
\begin{cases}
\underline{u}-2\delta_1\geq\varphi \quad & \text{on}\quad \overline{Q}_r(P_0)\setminus Q_{r/2}(P_0),\\
\underline{u}>\varphi \quad & \text{on}\quad \overline{Q}_r(P_0)\setminus \{P_0\}.
\end{cases}
\end{equation*}

\medskip

We only cover the case $\overline{\lambda}=\overline{F}(p)\geq0$ in the proof of Claim 1 to avoid a lengthy paper. The other case $\overline{\lambda}=\overline{F}(p)<0$ of Claim 1, i.e., proving that $\underline{u}$ is a supersolution when $\overline{\lambda}=\overline{F}(p)<0$ is omitted, as this case corresponds to the argument of \cite[Section 8]{CM14} that shows $\underline{u}$ is a supersolution. We use perturbed approximate correctors instead of perturbed correctors in both of the cases, and how we use perturbed correctors is demonstrated in detail from now on.

\medskip

\textbf{Step 1.2: Introduce approximate correctors $w_{\lambda}$ and $w_{\lambda,2\eta}$.}

\medskip

For $\lambda>0$, we let $w_{\lambda}$ be the solution to
\begin{align*}
\lambda w_{\lambda}+F\left(D^2w_{\lambda},p+Dw_{\lambda},y\right)=0 \quad \text{on}\quad\mathbb{R}^n.
\end{align*}
By a simple comparison argument, we see that $$-\sup v-\frac{\overline{F}(p)}{\lambda}+v\leq w_{\lambda}\leq-\inf v-\frac{\overline{F}(p)}{\lambda}+v,$$
where $v$ is a $\mathbb{Z}^n$-periodic viscosity solution to \eqref{eq:cellproblem} satisfying $$\sup v-\inf v\leq\frac{1}{4}\overline{\kappa}_0$$ for some constant $\overline{\kappa}_0=\overline{\kappa}_0(p)>0$. The existence of such a number $\overline{\kappa}_0=\overline{\kappa}_0(p)>0$ is from the hypothesis of Theorem \ref{thm:homogenizationofgeometricequations}, as the upper-semicontinuous (and lower-semicontinuous) envelope of $v$ is bounded from above (below, respectively), by the definition of viscosity solutions. Accordingly, we have
\begin{align*}
\sup w_{\lambda}-\inf w_{\lambda}\leq\frac{1}{2}\overline{\kappa}_0,
\end{align*}
and there exists $\lambda=\lambda(\theta,p)\in(0,1)$ such that
\begin{align*}
\|(-\lambda w_{\lambda})-\overline{\lambda}\|_{L^{\infty}(\mathbb{R}^n)}\leq\frac{1}{16}\theta.
\end{align*}

We now consider an approximate corrector of the perturbed problem; for $\eta\geq0$, let $w_{\lambda,2\eta}$ the solution to
\begin{align}
\lambda w_{\lambda,2\eta}+F_{2\eta}\left(D^2w_{\lambda,2\eta},p+Dw_{\lambda,2\eta},y\right)=0 \qquad \text{on}\quad\mathbb{R}^n.\label{line:eqnofwlambdaeta}
\end{align}
Here, $F_{2\eta}$ is given as in \eqref{line:perturbedoperators}. Choose $\eta=\eta(\lambda,\theta,\overline{\kappa}_0,p)=\eta(\theta,p)\in(0,1)$ such that
\begin{align*}
\|w_{\lambda,2\eta}-w_{\lambda}\|_{L^{\infty}(\mathbb{R}^n)}\leq\min\left\{\frac{1}{16}\theta,\frac{1}{4}\overline{\kappa}_0\right\}
\end{align*}
so that
\begin{align}\label{line:amplitudeofcorrector}
\sup w_{\lambda,2\eta}-\inf w_{\lambda,2\eta}\leq\overline{\kappa}_0
\end{align}
and
\begin{align}
\|(-\lambda w_{\lambda,2\eta})-\overline{\lambda}\|_{L^{\infty}(\mathbb{R}^n)}\leq\frac{1}{8}\theta.\label{line:convergenceofw}
\end{align}

\medskip

\textbf{Step 1.3: Define an extension and the (sup-)convolution of the test function $\varphi$.}

\medskip

Write
\begin{align}
\varphi(x,t)=\varphi(P_0)+p\cdot(x-x_0)+\lambda_t(t-t_0)+\psi(x,t)\label{line:psi}
\end{align}
with
\begin{align}
|D\psi|,|\psi_t|\leq\mu \quad \text{on}\quad \overline{Q}_r(P_0)\label{line:psiderivative}
\end{align}
for some $\mu=\mu(r)>0$ that goes to 0 as $r\to0$. We extend $\psi$ to a $C^2$ function on $\mathbb{R}^n\times[0,\infty)$, still denoted by $\psi$, that satisfies
\begin{align}
|D\psi|+\|D^2\psi\|\leq\mu_0 \quad \text{on }\quad \mathbb{R}^n\times[0,\infty)\label{line:mu0}
\end{align}
for some $\mu_0\leq1$ by replacing $r>0$ by a smaller number if necessary. We also keep the notation for $\varphi$.

\medskip

Let
\begin{equation}\label{line:convolution}
\begin{cases}
\widetilde{\varphi}^{\varepsilon}(x,t):=\varphi(x,t)+\varepsilon\left(w_{\lambda.2\eta}\left(\frac{x}{\varepsilon}\right)-w_{\lambda,2\eta}(0)\right),\\
\overline{\varphi}^{\varepsilon}(x,t):=\sup_{z\in\overline{B}_{\varepsilon\eta}(x)}\widetilde{\varphi}^{\varepsilon}(z,t),\\
\varphi^{\varepsilon}(x,t):=\sup_{z\in\mathbb{R}^n}\left(\overline{\varphi}^{\varepsilon}(z,t)-\frac{|x-z|^4}{4\varepsilon^3\rho}\right),
\end{cases}
\end{equation}
where $\rho\in(0,1)$ is to be determined later. By \cite[Lemma 13.2.(B)]{CM14}, there exists $\varepsilon=\varepsilon(\delta_1,\overline{\kappa}_0,|p|+\mu_0)=\varepsilon(r,p)\in(0,1)$ such that
\begin{align*}
u^{\varepsilon}-\delta_1\geq\varphi^{\varepsilon} \quad \text{on}\quad \overline{Q}_r(P_0)\setminus Q_{r/2}(P_0), 
\end{align*}
and that the infimum of $u^{\varepsilon}-\varphi^{\varepsilon}$ on $\overline{Q}_r(P_0)$ is attained in $Q_{r/2}(P_0)$, say at $P_{\varepsilon}=(x_{\varepsilon},t_{\varepsilon})\in Q_{r/2}(P_0)$. By \cite[Lemma 13.2.(B)]{CM14}, and also by \eqref{line:amplitudeofcorrector}, there exist $\rho=\rho(\eta,\overline{\kappa}_0,|p|+\mu_0)=\rho(\theta,p)\in(0,1)$, $\overline{x}_{\varepsilon}\in\mathbb{R}^n$ such that
\begin{align}
\varphi^{\varepsilon}(x_{\varepsilon},t_{\varepsilon})=\overline{\varphi}^{\varepsilon}(\overline{x}_{\varepsilon},t_{\varepsilon})-\frac{|x_{\varepsilon}-\overline{x}_{\varepsilon}|^4}{4\varepsilon^3\rho}\quad\text{and}\quad|\overline{x}_{\varepsilon}-x_{\varepsilon}|\leq\varepsilon\eta.\label{line:overlinex}
\end{align}

\medskip

\textbf{Step 1.4: Obtain the viscosity inequalities from the Crandall-Ishii lemma}

\medskip

Unraveling the infimum in \eqref{line:convolution} and by the choice of $\overline{x}_{\varepsilon}\in\mathbb{R}^n$, we see that
\begin{align*}
(x,(y,t))\in\mathbb{R}^n\times\overline{Q}_r(P_0)\longmapsto\overline{\varphi}^{\varepsilon}(x,t)-u^{\varepsilon}(y,t)-\frac{|x-y|^4}{4\varepsilon^3\rho}
\end{align*}
attains a maximum at $(\overline{x}_{\varepsilon},(x_{\varepsilon},t_{\varepsilon}))\in\mathbb{R}^n\times Q_{r/2}(P_0)$. Since \cite[(8.5)]{CIL92} holds for our $F$ (from the condition (i) of (V)) and for $\overline{\varphi}^{\varepsilon},u^{\varepsilon}$, we can apply the Crandall-Ishii lemma \cite[Theorem 8.3]{CIL92} to see that for every $\gamma>0$, there exist $X,Y\in S^n$ such that
\begin{equation}
\begin{cases}
(b_1,q,X)\in\overline{\mathcal{P}}^{2,+}\overline{\varphi}^{\varepsilon}(\overline{x}_{\varepsilon},t_{\varepsilon}), \quad &\\
(b_2,q,Y)\in\overline{\mathcal{P}}^{2,-}u^{\varepsilon}(x_{\varepsilon},t_{\varepsilon}), \quad &\\
b_1-b_2=0=\Phi_t(\overline{x}_{\varepsilon},x_{\varepsilon},t_{\varepsilon}), \quad &\\
-\left(\frac{1}{\gamma}+\|A\|\right)I_{2n}\leq\begin{pmatrix}X &0 \\0 &-Y \end{pmatrix}\leq A+\gamma A^2,
\end{cases}\label{line:semijets}
\end{equation}
where $\Phi(x,y,t):=\frac{|x-y|^4}{4\varepsilon^3\rho}$ and
\begin{align*}
q&:=D_x\Phi(\overline{x}_{\varepsilon},x_{\varepsilon},t_{\varepsilon})=-D_y\Phi(\overline{x}_{\varepsilon},x_{\varepsilon},t_{\varepsilon})=\delta(\overline{x}_{\varepsilon}-x_{\varepsilon})\quad\text{with}\quad\delta:=\frac{|x_{\varepsilon}-\overline{x}_{\varepsilon}|^2}{\varepsilon^3\rho},\\
A:&=D^2_{(x,y)}\Phi(\overline{x}_{\varepsilon},x_{\varepsilon},t_{\varepsilon})=\delta\begin{pmatrix}I_n+2\widehat{q}\otimes\widehat{q} &-I_n-2\widehat{q}\otimes\widehat{q} \\-I_n-2\widehat{q}\otimes\widehat{q} &I_n+2\widehat{q}\otimes\widehat{q} \end{pmatrix}\ \text{with}\  \widehat{q}:=\frac{q}{|q|}\ (\text{if }q\neq0).
\end{align*}

\medskip

By \cite[Lemma 13.1.(ii)]{CM14}, there exists $\widetilde{x}_{\varepsilon}\in\mathbb{R}^n$ such that
\begin{align}
(b_1,q,X)\in\overline{\mathcal{P}}^{2,+}\widetilde{\varphi}^{\varepsilon}(\widetilde{x}_{\varepsilon},t_{\varepsilon}),\quad\overline{\varphi}^{\varepsilon}(\overline{x}_{\varepsilon},t_{\varepsilon})=\widetilde{\varphi}^{\varepsilon}(\widetilde{x}_{\varepsilon},t_{\varepsilon}),\quad|\widetilde{x}_{\varepsilon}-\overline{x}_{\varepsilon}|\leq\varepsilon\eta.\label{line:xtilde}
\end{align}
Note that $v_{\lambda,2\eta}(x):=\varepsilon\left(w_{\lambda,2\eta}\left(\frac{x}{\varepsilon}\right)-w_{\lambda,2\eta}(0)\right)$ is a viscosity solution to
\begin{align*}
F_{2\eta}\left(\varepsilon D^2v_{\lambda,2\eta}(x),p+Dv_{\lambda,2\eta}(x),\frac{x}{\varepsilon}\right)\leq\overline{\lambda}+\frac{1}{8}\theta\quad\text{on}\quad\mathbb{R}^n,
\end{align*}
from \eqref{line:eqnofwlambdaeta}, \eqref{line:convergenceofw}. Therefore, from \eqref{line:xtilde} and $\widetilde{\varphi}^{\varepsilon}=\varphi+v_{\lambda,2\eta}$, we have
\begin{align}
b_1=\varphi_t(\widetilde{P}_{\varepsilon}),\ (q-D\varphi(\widetilde{P}_{\varepsilon}),X-D^2\varphi(\widetilde{P}_{\varepsilon}))\in\overline{\mathcal{J}}^{2,+}v_{\lambda,2\eta}(\widetilde{x}_{\varepsilon})\ \text{with}\ \widetilde{P}_{\varepsilon}=(\widetilde{x}_{\varepsilon},t_{\varepsilon})\label{line:semijetofv}
\end{align}
and
\begin{align*}
\left(F_{2\eta}\right)_*\left(\varepsilon X-\varepsilon D^2\varphi(\widetilde{P}_{\varepsilon}),p+q-D\varphi(\widetilde{P}_{\varepsilon}),\frac{\widetilde{x}_{\varepsilon}}{\varepsilon}\right)\leq\overline{\lambda}+\frac{1}{8}\theta.
\end{align*}
Finally, by \eqref{line:psi} and $(b_2,q,Y)\in\overline{\mathcal{P}}^{2,-}u^{\varepsilon}(x_{\varepsilon},t_{\varepsilon})$, we obtain the viscosity inequalities
\begin{equation}
\begin{cases}
\left(F_{2\eta}\right)_*\left(\varepsilon X-\varepsilon D^2\psi(\widetilde{P}_{\varepsilon}),q-D\psi(\widetilde{P}_{\varepsilon}),\frac{\widetilde{x}_{\varepsilon}}{\varepsilon}\right)\leq\overline{\lambda}+\frac{1}{8}\theta, \quad &\\
b_2+F^*\left(\varepsilon Y,q,\frac{x_{\varepsilon}}{\varepsilon}\right)\geq0. \quad &
\end{cases}\label{line:viscosityinequalities}
\end{equation}

\medskip

\textbf{Step 1.5: Obtain bounds of the gradient $q$ and of the Hessians $X,Y$.}

\medskip

We can check that $\|A\|=6\delta$ and
\begin{align*}
A^2\leq 18\delta^2 \begin{pmatrix}I_n &-I_n \\-I_n &I_n \end{pmatrix}.
\end{align*}
Setting $\gamma=\frac{1}{3\delta}$ in \eqref{line:semijets}, we obtain
\begin{align*}
-9\delta I_n\leq X\leq Y\leq 9\delta I_n
\end{align*}
and in turn, with $\delta\leq\frac{\eta^2}{\varepsilon\rho}$ (from \eqref{line:overlinex}),
\begin{align}
|q|\leq\frac{\eta^3}{\rho}\quad\text{and}\quad-\frac{9\eta^2}{\rho}I_n\leq \varepsilon X\leq \varepsilon Y\leq \frac{9\eta^2}{\rho}I_n.\label{line:boundfromabove}
\end{align}

\medskip

Now, as a crucial step, we separate the gradient $q$ from the origin with a constant that depends only on $\theta,p$. Choose $r=r(\theta)\in(0,1)$ small, $\varepsilon=\varepsilon(r,p)=\varepsilon(\theta,p)\in(0,1)$ small enough that $\mu=\mu(r)\leq\frac{\theta}{4}$ where $\mu>0$ is given in \eqref{line:psiderivative}, and that
\begin{align*}
P_\varepsilon=(x_{\varepsilon},t_{\varepsilon}),\quad|\overline{x}_{\varepsilon}-x_{\varepsilon}|\leq\varepsilon\eta,\quad|\widetilde{x}_{\varepsilon}-\overline{x}_{\varepsilon}|\leq\varepsilon\eta
\end{align*}
imply $\widetilde{P}_{\varepsilon}=(\widetilde{x}_{\varepsilon},t_{\varepsilon})\in Q_r(P_0)$, and thus that
\begin{align}
|\psi_{t}(\widetilde{P}_{\varepsilon})|\leq\mu\leq\frac{\theta}{4}.\label{line:psi_t}
\end{align}
From \eqref{line:psi}, \eqref{line:semijets}, \eqref{line:semijetofv}, we have $b_1=b_2=\lambda_t+\psi_t(\widetilde{P}_{\varepsilon})$. Using the assumption $\overline{\lambda}=\overline{F}(p)\geq0$, we link the identities/inequalities \eqref{line:overlinelambda}, \eqref{line:semijets}, \eqref{line:viscosityinequalities}, \eqref{line:boundfromabove}, \eqref{line:psi_t} to obtain a lower bound as follows:
\begin{align*}
\frac{\theta}{2}\leq\overline{\lambda}+\theta-\psi_t(\widetilde{P}_{\varepsilon})=-\lambda_t-\psi_t(\widetilde{P}_{\varepsilon})&=-b_2\\
&\leq F^*\left(\varepsilon Y,q,\frac{x_{\varepsilon}}{\varepsilon}\right)\leq F^*\left(\varepsilon X, q, \frac{x_{\varepsilon}}{\varepsilon}\right).
\end{align*}
Moreover, the function $\overline{\varphi}^{\varepsilon}$ is sup-convoluted by the definition \eqref{line:convolution}, and therefore, its semijet $(b_1,q,X)\in\overline{\varphi}^{\varepsilon}$ enjoys a bound with the geometric operator $F$ as follows (\cite[Lemma 13.1.(ii)]{CM14}):
\begin{align*}
\frac{\theta}{2}\leq F^*\left(\varepsilon X, q, \frac{x_{\varepsilon}}{\varepsilon}\right)\leq c|q|
\end{align*}
for some $c=c(\eta)=c(\theta,p)>0$. Therefore, we obtain
\begin{align}
\frac{\theta}{2c}\leq|q|.\label{line:separatefromtheorigin}
\end{align}

\medskip

\textbf{Step 1.6: Derive a contradiction.}

\medskip

Note that the operator $F$ is uniformly continuous on
\begin{align*}
\left\{(X',p',y')\in S^n\times\mathbb{R}^n\times\mathbb{R}^n:\|X'\|\leq1+\frac{9\eta^2}{\rho},\ \frac{\theta}{4c}\leq|p'|\leq1+\frac{\eta^3}{\rho}\right\}.
\end{align*}
By this fact, we can choose $r=r(\theta,p)\in(0,1),$ $\varepsilon=\varepsilon(r,\theta,p)\in(0,1)$ such that
\begin{align*}
\varepsilon\|D^2\psi(\widetilde{P}_{\varepsilon})\|,|D\psi(\widetilde{P}_{\varepsilon})|\leq\mu(r)\leq\min\left\{1,\frac{\theta}{2},\frac{\theta}{4c}\right\},
\end{align*}
and thus that, by \eqref{line:boundfromabove}, \eqref{line:separatefromtheorigin},
\begin{align*}
\|\varepsilon X\|,\|\varepsilon X-\varepsilon D^2\psi(\widetilde{P}_{\varepsilon})\|\leq1+\frac{9\eta^2}{\rho}\ \ \text{and}\ \ \frac{\theta}{4c}\leq|q|,|q-D\psi(\widetilde{P}_{\varepsilon})|\leq1+\frac{\eta^3}{\rho},
\end{align*}
and finally that
\begin{align}
F\left(\varepsilon X-\varepsilon D^2\psi(\widetilde{P}_{\varepsilon}),q-D\psi(\widetilde{P}_{\varepsilon}),\frac{x_{\varepsilon}}{\varepsilon}\right)\geq-\frac{1}{4}\theta+ F\left(\varepsilon X,q,\frac{x_{\varepsilon}}{\varepsilon}\right).\label{line:uniformcontinuity}
\end{align}
We derive a contradiction by linking the inequalities \eqref{line:overlinelambda}, \eqref{line:viscosityinequalities}, \eqref{line:boundfromabove}, \eqref{line:psi_t}, \eqref{line:uniformcontinuity}:
\begin{align*}
\overline{\lambda}+\frac{1}{8}\theta&\geq\left(F_{2\eta}\right)_*\left(\varepsilon X-\varepsilon D^2\psi(\widetilde{P}_{\varepsilon}),q-D\psi(\widetilde{P}_{\varepsilon}),\frac{\widetilde{x}_{\varepsilon}}{\varepsilon}\right)\\
&=F_{2\eta}\left(\varepsilon X-\varepsilon D^2\psi(\widetilde{P}_{\varepsilon}),q-D\psi(\widetilde{P}_{\varepsilon}),\frac{\widetilde{x}_{\varepsilon}}{\varepsilon}\right)\\
&\geq F\left(\varepsilon X-\varepsilon D^2\psi(\widetilde{P}_{\varepsilon}),q-D\psi(\widetilde{P}_{\varepsilon}),\frac{x_{\varepsilon}}{\varepsilon}\right)\\
&\geq-\frac{1}{4}\theta+ F\left(\varepsilon X,q,\frac{x_{\varepsilon}}{\varepsilon}\right)\\
&\geq-\frac{1}{4}\theta+ F\left(\varepsilon Y,q,\frac{x_{\varepsilon}}{\varepsilon}\right)\\
&\geq-\frac{1}{4}\theta-b_2\\
&=-\frac{1}{4}\theta-\lambda_t-\psi_t(\widetilde{P}_{\varepsilon})\\
&\geq\overline{\lambda}+\frac{1}{2}\theta,
\end{align*}
which completes the proof of Claim 1 in the case $\overline{\lambda}=\overline{F}(p)\geq0$.

\medskip

\textbf{Claim 2: The function $\overline{u}$ is a viscosity subsolution to \eqref{eq:ueffective} in $\mathbb{R}^n\times(0,\infty)$.} 

\medskip

\textbf{Step 2.1: Introduce parameters $r,\theta>0$ from the assumption for the contrary.}

\medskip

Suppose the contrary for contradiction. Then, there exist $P_0=(x_0,t_0)\in\mathbb{R}^n\times(0,\infty)$, $r\in(0,t_0)$ and a $C^2$ function $\varphi$ in $Q_r(P_0)$ such that
\begin{equation*}
\begin{cases}
\overline{u}(P_0)=\varphi(P_0), \quad &\\
\overline{u}\leq\varphi \quad &\text{on}\quad \overline{Q}_r(P_0),\\
\varphi_t(P_0)+\overline{F}(D\varphi(P_0))=:\theta>0.\quad&
\end{cases}
\end{equation*}
Let $p=D\varphi(P_0),\ \overline{\lambda}=\overline{F}(p),\ \lambda_t=\varphi_t(P_0)$ so that we have
\begin{align}
\lambda_t+\overline{\lambda}=\theta.\label{line:overlinelambda2}
\end{align}

By replacing $\varphi$ by $|(x,t)-P_0|^4+\varphi$ if necessary, we can assume without loss of generality that there exists $\delta_1=\delta_1(r)>0$ such that
\begin{equation*}
\begin{cases}
\overline{u}+2\delta_1\leq\varphi \quad & \text{on}\quad \overline{Q}_r(P_0)\setminus Q_{r/2}(P_0),\\
\overline{u}<\varphi \quad & \text{on}\quad \overline{Q}_r(P_0)\setminus \{P_0\}.
\end{cases}
\end{equation*}

\medskip

We only handle the case $\overline{\lambda}=\overline{F}(p)>0$ in the proof of Claim 2. The other case $\overline{\lambda}=\overline{F}(p)\leq0$ of Claim 2, i.e., proving that $\overline{u}$ is a subsolution when $\overline{\lambda}=\overline{F}(p)\leq0$ is omitted, as this case corresponds to the argument of \cite[Section 8]{CM14} that shows $\overline{u}$ is a subsolution. We instead explain the use of perturbed approximate correctors in detail in the below (but in the opposite direction of perturbation to the proof of Claim 1).

\medskip

\textbf{Step 2.2: Introduce approximate correctors $w_{\lambda}$ and $w_{\lambda}^{2\eta}$.}

\medskip

Let $w_{\lambda}$ be the solution to
\begin{align*}
\lambda w_{\lambda}+F\left(D^2w_{\lambda},p+Dw_{\lambda},y\right)=0 \quad \text{on}\quad \mathbb{R}^n.
\end{align*}
Similarly as in Step 1.1.1, namely by comparing $w_{\lambda}$ with $v$ (with additional constants), we see that there exist $\overline{\kappa}_0=\overline{\kappa}_0(p)>0$ and $\lambda=\lambda(\theta,p)\in(0,1)$ such that
\begin{align*}
\sup w_{\lambda}-\inf w_{\lambda}\leq\frac{1}{2}\overline{\kappa}_0
\end{align*}
and
\begin{align*}
\|(-\lambda w_{\lambda})-\overline{\lambda}\|_{L^{\infty}(\mathbb{R}^n)}\leq\min\left\{\frac{1}{16}\theta,\frac{1}{16}\overline{\lambda}\right\}.
\end{align*}
We consider an approximate corrector of the perturbed problem; for $\eta\geq0$, let $w_{\lambda}^{2\eta}$ the solution to
\begin{align}
\lambda w_{\lambda}^{2\eta}+F^{2\eta}\left(D^2w_{\lambda}^{2\eta},p+Dw_{\lambda}^{2\eta},y\right)=0 \quad \text{on}\quad \mathbb{R}^n.\label{line:eqnofwlambdaeta2}
\end{align}
Here, $F^{2\eta}$ is given as in \eqref{line:perturbedoperators}. Choose $\eta=\eta(\lambda,\theta,\overline{\kappa}_0,p)=\eta(\theta,p)\in(0,1)$ such that
\begin{align*}
\|w_{\lambda}^{2\eta}-w_{\lambda}\|_{L^{\infty}(\mathbb{R}^n)}\leq\min\left\{\frac{1}{16}\theta,\frac{1}{16}\overline{\lambda},\frac{1}{4}\overline{\kappa}_0\right\}
\end{align*}
so that
\begin{align}\label{line:amplitudeofcorrector2}
\sup w_{\lambda}^{2\eta}-\inf w_{\lambda}^{2\eta}\leq\overline{\kappa}_0
\end{align}
and
\begin{align}
\|(-\lambda w_{\lambda}^{2\eta})-\overline{\lambda}\|_{L^{\infty}(\mathbb{R}^n)}\leq\min\left\{\frac{1}{8}\theta,\frac{1}{8}\overline{\lambda}\right\}.\label{line:convergenceofw2}
\end{align}

\medskip

\textbf{Step 2.3: Define an extension of the test function $\varphi$, a linear functional $\ell$ and the (sup-)convolution of $u^{\varepsilon}$.}

\medskip

Let $\psi$ be a $C^2$ function on $\mathbb{R}^n\times[0,\infty)$ defined as in \eqref{line:psi} (with abuse of notations for the extension) satisfying \eqref{line:psiderivative}, \eqref{line:mu0}. For $(x,t)\in\mathbb{R}^n\times[0,\infty)$, let 
\begin{equation}\label{line:ell}
\begin{cases}
\ell(x,t):=\varphi(P_0)+p\cdot(x-x_0)+\left(-\lambda w_{\lambda}^{2\eta}(0)\right)(t-t_0),\\
\widetilde{\ell}^{\varepsilon}(x,t):=\ell(x,t)+\varepsilon\left(w_{\lambda}^{2\eta}\left(\frac{x}{\varepsilon}\right)-w_{\lambda}^{2\eta}(0)\right),\\
\widehat{\ell}^{\varepsilon}(x,t):=\inf_{z\in\mathbb{R}^n}\left(\widetilde{\ell}^{\varepsilon}(z,t)+\frac{|x-z|^4}{4\varepsilon^3\rho}\right),
\end{cases}
\end{equation}
where $\rho\in(0,1)$ is to be determined later, and
\begin{equation}\label{line:uhat}
\begin{cases}
\overline{u}^{\varepsilon}(x,t):=u^{\varepsilon}(x,t)+\left((-\lambda w_{\lambda}^{2\eta}(0))-\lambda_t\right)(t-t_0)-\psi(x,t),\\
\widehat{u}^{\varepsilon}(x,t):=\sup_{z\in\overline{B}_{\varepsilon\eta}(x)}\overline{u}^{\varepsilon}(z,t).
\end{cases}
\end{equation}

For $\varepsilon=\varepsilon(\delta_1)=\varepsilon(r)\in(0,1)$ small enough,
\begin{align*}
u^{\varepsilon}+\delta_1\leq\varphi \quad \text{on}\quad \overline{Q}_r(P_0)\setminus Q_{r/2}(P_0), 
\end{align*}
which in turn implies, by \eqref{line:amplitudeofcorrector2}, \eqref{line:ell}, \eqref{line:uhat},
\begin{align*}
\overline{u}^{\varepsilon}+\delta_1\leq\widetilde{\ell}^{\varepsilon}+\varepsilon\overline{\kappa}_0 \quad \text{on}\quad \overline{Q}_r(P_0)\setminus Q_{r/2}(P_0). 
\end{align*}
By the fact that $\limsup_{\varepsilon}^*\widehat{u}^{\varepsilon}=\limsup_{\varepsilon}^*\overline{u}^{\varepsilon}$ and by \cite[Lemma 13.2.(A).(vii)]{CM14}, we see that there exists $\varepsilon=\varepsilon(P_0,r,\delta_1,\overline{\kappa}_0,|p|)=\varepsilon(P_0,r,p)\in(0,1)$ small enough such that
\begin{align*}
\widehat{u}^{\varepsilon}+\frac{1}{2}\delta_1\leq\widehat{\ell}^{\varepsilon}\quad \text{on}\quad \overline{Q}_r(P_0)\setminus Q_{r/2}(P_0), 
\end{align*}
and that the supremum of $\widehat{u}^{\varepsilon}-\widehat{\ell}^{\varepsilon}$ on $\overline{Q}_r(P_0)$ is attained in $Q_{r/2}(P_0)$, say at $\widehat{P}_{\varepsilon}=(\widehat{x}_{\varepsilon},t_{\varepsilon})\in Q_{r/2}(P_0).$ Also, by \cite[Lemma 13.2.(A).(vii)]{CM14}, there exist $\rho=\rho(\eta,\overline{\kappa}_0,|p|)=\rho(\theta,p)\in(0,1)$, $\widetilde{x}_{\varepsilon}\in\mathbb{R}^n$ such that
\begin{align}
\widehat{\ell}^{\varepsilon}(\widehat{x}_{\varepsilon},t_{\varepsilon})=\widetilde{\ell}^{\varepsilon}(\widetilde{x}_{\varepsilon},t_{\varepsilon})+\frac{|\widehat{x}_{\varepsilon}-\widetilde{x}_{\varepsilon}|^4}{4\varepsilon^3\rho}\quad\text{and}\quad|\widehat{x}_{\varepsilon}-\widetilde{x}_{\varepsilon}|\leq\varepsilon\eta.\label{line:widetildex}
\end{align}

\medskip

\textbf{Step 2.4: Obtain the viscosity inequalities from the Crandall-Ishii lemma.}

\medskip

Unraveling the infimum in \eqref{line:ell}, the supremum in \eqref{line:uhat} and by the choice of $\widetilde{x}_{\varepsilon}\in\mathbb{R}^n$, we see that
\begin{align*}
(x,y,t)\in\overline{B}_r(x_0)\times\mathbb{R}^n\times[t_0-r,t_0+r]\longmapsto\widehat{u}^{\varepsilon}(x,t)-\widetilde{\ell}^{\varepsilon}(y,t)-\frac{|x-y|^4}{4\varepsilon^3\rho}
\end{align*}
attains a maximum at $(\widehat{x}_{\varepsilon},\widetilde{x}_{\varepsilon},t_{\varepsilon})\in B_{r/2}(x_0)\times\mathbb{R}^n\times\left(t_0-\frac{1}{2}r,t_0+\frac{1}{2}r\right)$. Since \cite[(8.5)]{CIL92} holds for our $F$ (by the condition (i) of (V)) and for $\widehat{u}^{\varepsilon},\widetilde{\ell}^{\varepsilon}$ (with the aid of \cite[Lemma 13.1.(ii)]{CM14} for $\widehat{u}^{\varepsilon}$), we can apply the Crandall-Ishii lemma \cite[Theorem 8.3]{CIL92} to see that for every $\gamma>0$, there exist $X,Y\in S^n$ such that
\begin{equation}
\begin{cases}
(b_1,q,X)\in\overline{\mathcal{P}}^{2,+}\widehat{u}^{\varepsilon}(\widehat{x}_{\varepsilon},t_{\varepsilon}), \quad &\\
(b_2,q,Y)\in\overline{\mathcal{P}}^{2,-}\widetilde{\ell}^{\varepsilon}(\widetilde{x}_{\varepsilon},t_{\varepsilon}), \quad &\\
b_1-b_2=0=\Phi_t(\overline{x}_{\varepsilon},x_{\varepsilon},t_{\varepsilon}), \quad &\\
-\left(\frac{1}{\gamma}+\|A\|\right)I_{2n}\leq\begin{pmatrix}X &0 \\0 &-Y \end{pmatrix}\leq A+\gamma A^2,
\end{cases}\label{line:semijets2}
\end{equation}
where $\Phi(x,y,t):=\frac{|x-y|^4}{4\varepsilon^3\rho}$ and
\begin{align*}
q&:=D_x\Phi(\widehat{x}_{\varepsilon},\widetilde{x}_{\varepsilon},t_{\varepsilon})=-D_y\Phi(\widehat{x}_{\varepsilon},\widetilde{x}_{\varepsilon},t_{\varepsilon})=\delta(\widehat{x}_{\varepsilon}-\widetilde{x}_{\varepsilon})\quad\text{with}\quad\delta:=\frac{|\widehat{x}_{\varepsilon}-\widetilde{x}_{\varepsilon}|^2}{\varepsilon^3\rho},\\
A:&=D^2_{(x,y)}\Phi(\widehat{x}_{\varepsilon},\widetilde{x}_{\varepsilon},t_{\varepsilon})=\delta\begin{pmatrix}I_n+2\widehat{q}\otimes\widehat{q} &-I_n-2\widehat{q}\otimes\widehat{q} \\-I_n-2\widehat{q}\otimes\widehat{q} &I_n+2\widehat{q}\otimes\widehat{q} \end{pmatrix}\ \text{with}\  \widehat{q}:=\frac{q}{|q|}\ (\text{if }q\neq0).
\end{align*}

\medskip

By \cite[Lemma 13.1.(ii)]{CM14}, there exists $\overline{x}_{\varepsilon}\in\mathbb{R}^n$ such that
\begin{align}
(b_1,q,X)\in\overline{\mathcal{P}}^{2,+}\overline{u}^{\varepsilon}(\overline{x}_{\varepsilon},t_{\varepsilon}),\quad\widehat{u}^{\varepsilon}(\widehat{x}_{\varepsilon},t_{\varepsilon})=\overline{u}^{\varepsilon}(\overline{x}_{\varepsilon},t_{\varepsilon}),\quad|\widehat{x}_{\varepsilon}-\overline{x}_{\varepsilon}|\leq\varepsilon\eta,\label{line:overlinex2}
\end{align}
and consequently, by \eqref{line:uhat}, that
\begin{align}
\left(b_1-(-\lambda w_{\lambda}^{2\eta}(0)-\lambda_t)+\psi_t(\overline{P}_{\varepsilon}),q+D\psi(\overline{P}_{\varepsilon}),X+D^2\psi(\overline{P}_{\varepsilon})\right)\in\overline{\mathcal{P}}^{2,+}u^{\varepsilon}(\overline{x}_{\varepsilon},t_{\varepsilon})\label{line:}
\end{align}
with $\overline{P}_{\varepsilon}:=(\overline{x}_{\varepsilon},t_{\varepsilon})$.

Also, we note that $v_{\lambda}^{2\eta}(x):=\varepsilon\left(w_{\lambda}^{2\eta}\left(\frac{x}{\varepsilon}\right)-w_{\lambda}^{2\eta}(0)\right)$ is a viscosity solution to
\begin{align*}
F^{2\eta}\left(\varepsilon D^2v_{\lambda}^{2\eta}(x),p+Dv_{\lambda}^{2\eta}(x),\frac{x}{\varepsilon}\right)\geq\overline{\lambda}-\min\left\{\frac{1}{8}\theta,\frac{1}{8}\overline{\lambda}\right\}\quad\text{on}\quad\mathbb{R}^n,
\end{align*}
from \eqref{line:eqnofwlambdaeta2}, \eqref{line:convergenceofw2}. Therefore, from \eqref{line:ell} and the second line of \eqref{line:semijets2}, we have
\begin{align}
b_2=-\lambda w_{\lambda}^{2\eta}(0),\ (q-p,Y)\in\overline{\mathcal{J}}^{2,-}v_{\lambda}^{2\eta}(\widetilde{x}_{\varepsilon}).\label{line:semijetofv2}
\end{align}
Hence, by \eqref{line:} and \eqref{line:semijetofv2}, we obtain the viscosity inequalities
\begin{equation}
\begin{cases}
\lambda_t+\psi_t(\overline{P}_{\varepsilon})+F_*\left(\varepsilon X+\varepsilon D^2\psi(\overline{P}_{\varepsilon}),q+D\psi(\overline{P}_{\varepsilon}),\frac{\overline{x}_{\varepsilon}}{\varepsilon}\right)\leq0, \quad &\\
(F^{2\eta})^*\left(\varepsilon Y,q,\frac{\widetilde{x}_{\varepsilon}}{\varepsilon}\right)\geq\overline{\lambda}-\min\left\{\frac{1}{8}\theta,\frac{1}{8}\overline{\lambda}\right\}. \quad &
\end{cases}\label{line:viscosityinequalities2}
\end{equation}

\medskip

\textbf{Step 2.5: Obtain bounds of the gradient $q$ and of the Hessians $X,Y$.}

\medskip

Similarly as before, we set $\gamma=\frac{1}{3\delta}$ in \eqref{line:semijets2} and combine the fact that $\delta\leq\frac{\eta^2}{\varepsilon\rho}$ (from \eqref{line:overlinex2}) to obtain
\begin{align}
|q|\leq\frac{\eta^3}{\rho}\quad\text{and}\quad-\frac{9\eta^2}{\rho}I_n\leq \varepsilon X\leq \varepsilon Y\leq \frac{9\eta^2}{\rho}I_n.\label{line:boundfromabove2}
\end{align}

\medskip

We separate the gradient $q$ from the origin with a constant that depends only on $\theta,p$. Note that $(b_1,q,X)\in\overline{\mathcal{P}}^{2,+}\widehat{u}^{\varepsilon}(\widehat{x}_{\varepsilon},t_{\varepsilon})$ and that the function $\widehat{u}^{\varepsilon}$ is sup-convoluted by the definition \eqref{line:uhat}. Therefore, \cite[Lemma 13.1(ii)]{CM14} implies, together with \eqref{line:viscosityinequalities2}, \eqref{line:boundfromabove2}, that
\begin{align*}
\frac{1}{2}\overline{\lambda}\leq(F^{2\eta})^*\left(\varepsilon Y,q,\frac{\widetilde{x}_{\varepsilon}}{\varepsilon}\right)\leq(F^{2\eta})^*\left(\varepsilon X,q,\frac{\widetilde{x}_{\varepsilon}}{\varepsilon}\right)\leq c|q|
\end{align*}
for some constant $c=c(\eta)=c(\theta,p)>0$, which then yields
\begin{align}
\frac{\overline{\lambda}}{2c}\leq|q|.\label{line:separatefromtheorigin2}
\end{align}

\medskip

\textbf{Step 2.6: Derive a contradiction.}

\medskip

Note that the operator $F$ is uniformly continuous on
\begin{align*}
\left\{(X',p',y')\in S^n\times\mathbb{R}^n\times\mathbb{R}^n:\|X'\|\leq1+\frac{9\eta^2}{\rho},\ \frac{\overline{\lambda}}{4c}\leq|p'|\leq1+\frac{\eta^3}{\rho}\right\}.
\end{align*}
Combining the above uniform continuity with \eqref{line:widetildex}, \eqref{line:overlinex2}, we can choose $r=r(\theta,p)\in(0,1),$ $\varepsilon=\varepsilon(P_0,r,\theta,p)\in(0,1)$ satisfying
\begin{align}
\varepsilon\|D^2\psi(\overline{P}_{\varepsilon})\|,|D\psi(\overline{P}_{\varepsilon})|,|\psi_t(\overline{P}_{\varepsilon})|\leq\mu(r)\leq\min\left\{1,\frac{\theta}{4},\frac{\overline{\lambda}}{4c}\right\},\label{line:psi_t2}
\end{align}
so that, by \eqref{line:boundfromabove2}, \eqref{line:separatefromtheorigin2},
\begin{align*}
\|\varepsilon X\|,\|\varepsilon X+\varepsilon D^2\psi(\overline{P}_{\varepsilon})\|\leq1+\frac{9\eta^2}{\rho}\ \ \text{and}\ \ \frac{\overline{\lambda}}{4c}\leq|q|,|q+D\psi(\overline{P}_{\varepsilon})|\leq1+\frac{\eta^3}{\rho},
\end{align*}
and so that
\begin{align}
F\left(\varepsilon X,q,\frac{\overline{x}_{\varepsilon}}{\varepsilon}\right)\leq\frac{1}{4}\theta+F\left(\varepsilon X+\varepsilon D^2\psi(\overline{P}_{\varepsilon}),q+D\psi(\overline{P}_{\varepsilon}),\frac{\overline{x}_{\varepsilon}}{\varepsilon}\right).\label{line:uniformcontinuity2}
\end{align}
We derive a contradiction by linking the inequalities \eqref{line:overlinelambda2}, \eqref{line:viscosityinequalities2}, \eqref{line:boundfromabove2}, \eqref{line:psi_t2}, \eqref{line:uniformcontinuity2}:
\begin{align*}
\overline{\lambda}-\frac{1}{8}\theta&\leq\left(F^{2\eta}\right)^*\left(\varepsilon X,q,\frac{\widetilde{x}_{\varepsilon}}{\varepsilon}\right)\\
&= F^{2\eta}\left(\varepsilon X,q,\frac{\widetilde{x}_{\varepsilon}}{\varepsilon}\right)\\
&\leq F\left(\varepsilon X,q,\frac{\overline{x}_{\varepsilon}}{\varepsilon}\right)\\
&\leq\frac{1}{4}\theta+F\left(\varepsilon X+\varepsilon D^2\psi(\widetilde{P}_{\varepsilon}),q+D\psi(\widetilde{P}_{\varepsilon}),\frac{\overline{x}_{\varepsilon}}{\varepsilon}\right)\\
&\leq \frac{1}{4}\theta-\lambda_t-\psi_t(\overline{P}_{\varepsilon})\\
&\leq \frac{1}{4}\theta+\overline{\lambda}-\theta-\psi_t(\overline{P}_{\varepsilon})\\
&\leq \overline{\lambda}-\frac{1}{2}\theta,
\end{align*}
which completes the proof of Claim 2 in the case $\overline{\lambda}=\overline{F}(p)>0$.

\medskip

\end{proof}

We finish this section by proving the following proposition, which implies Corollary \ref{cor:homogenizationofgeqn} together with Theorem \ref{thm:homogenizationofgeometricequations}. The proof is a simple argument using Perron's method.

\begin{proposition}\label{prop:correctorofgeqn}
Let $d,A>0$ and $V(x)=A(-\cos x_2\sin x_1,\cos x_1\sin x_2)$ for $x=(x_1,x_2)\in\mathbb{R}^2$. Let $$F(X,p,y)=\left(|p|-d\text{tr}\left\{\left(I_2-\hat{p}\otimes\hat{p}\right)X\right\}\right)_++V(y)\cdot p$$ for $(X,p,y)\in S^2\times(\mathbb{R}^2\setminus\left\{0\right\})\times\mathbb{R}^2$. For each $p\in\mathbb{R}^2$, let $\overline{H}(p)$ be the unique real number as in the statement of Corollary \ref{cor:homogenizationofgeqn}. Then, \eqref{eq:cellproblem} with the right-hand side replaced by $\overline{H}(p)$ admits a $\mathbb{Z}^2$-periodic viscosity solution $v$.
\end{proposition}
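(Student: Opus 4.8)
The plan is to read off a $\mathbb{Z}^2$-periodic corrector directly from the quantitative flatness of Theorem~\ref{thm:effectiveburningvelocity} by a Perron-type/half-relaxed-limit argument — using long-time limits of solutions (approximate correctors) in place of correctors of perturbed problems, as announced in the introduction. \emph{First}, I would scale out $\varepsilon$: since the curvature term in \eqref{eq:epsilong} scales exactly, uniqueness forces $u^\varepsilon(x,t)=\varepsilon\,U(x/\varepsilon,t/\varepsilon)$ where $U$ is the unique viscosity solution of the $\varepsilon$-free equation $U_t+F(D^2U,DU,y)=0$ in $\mathbb{R}^2\times(0,\infty)$, $U(\cdot,0)=p\cdot(\cdot)$, with $F$ as in the statement. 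Theorem~\ref{thm:effectiveburningvelocity} then reads as the $s$-uniform bound $|U(y,s)-p\cdot y+\overline H(p)\,s|\le C$ on $\mathbb{R}^2\times[0,\infty)$. Put $\Psi(y,s):=U(y,s)+\overline H(p)\,s-p\cdot y$; because $V$ is $\mathbb{Z}^2$-periodic and the equation has no zeroth-order term, adding $p\cdot k$ to $U$ and invoking uniqueness gives $U(y+k,s)=U(y,s)+p\cdot k$, so $\Psi$ is $\mathbb{Z}^2$-periodic in $y$, $|\Psi|\le C$, and $\Psi$ solves the evolution cell problem $\Psi_t+F(D^2\Psi,p+D\Psi,y)=\overline H(p)$. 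A solution of \eqref{eq:cellproblem} with right-hand side $\overline H(p)$ is precisely a $t$-independent $\mathbb{Z}^2$-periodic solution of this equation; if $p=0$ then $\overline H(0)=0$ by homogeneity and $v\equiv0$ works, so I assume $p\neq0$ below.

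\emph{Second}, I would pass to the long-time half-relaxed limits $\overline v(y):=\limsup_{z\to y,\ s\to\infty}\Psi(z,s)$ and $\underline v(y):=\liminf_{z\to y,\ s\to\infty}\Psi(z,s)$, which are bounded, $\mathbb{Z}^2$-periodic, respectively upper and lower semicontinuous, with $\underline v\le\overline v$. By the standard stability of viscosity sub/supersolutions of geometric equations under half-relaxed limits — applied to the uniformly bounded family of time-translates $\Psi(\cdot,\cdot+s)$, $s\to\infty$, and using that a $t$-independent sub/supersolution of the evolution equation is one of the stationary equation — $\overline v$ is a viscosity subsolution and $\underline v$ a viscosity supersolution of $F(D^2v,p+Dv,y)=\overline H(p)$, the equation being understood through $F^*$, $F_*$ at points where $p+Dv=0$.

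\emph{The main obstacle} is the final step: showing that the half-relaxed limit is in fact a solution, so that one may take $v:=\overline v$ (equivalently $v:=\underline v$). This does not follow from abstract Perron's method, because the cell problem of a geometric equation has no comparison principle — indeed homogenization, hence a corrector, can fail, cf.\ the $3$-d shear flows of \cite{MMTXY23} — so Perron between the ordered pair $\underline v\le\overline v$ only returns the pair. What must be exploited is the \emph{two-sided} bound $|\Psi|\le C$. The natural route is to prove $\overline v$ is also a supersolution by contradiction: were the supersolution inequality to fail at some $y_0$ for a test function $\phi$ (the case $p+D\phi(y_0)=0$ handled via $F_*$ and a small quadratic perturbation), then, after localization and with a sequence $z_k\to y_0$, $s_k\to\infty$ realizing $\Psi(z_k,s_k)\to\overline v(y_0)$, one would compare $\Psi$ on a parabolic cylinder $\overline{B}_\delta(y_0)\times[s_k-T,s_k]$ against the strict classical subsolution $(y,s)\mapsto\phi(y)-\tfrac\theta2(s-s_k)+a|y-y_0|^2+\mathrm{const}$ of the evolution equation; here $|\Psi|\le C$ controls the bottom face once $T$ is large, and the genuinely delicate part — which I expect to be the hard point — is to also keep the minimum off the lateral face for $s$ near $s_k$, where the $\limsup$ only bounds $\Psi$ from above, so that the argument really uses the rigidity of the large-time profile of $\Psi$ rather than mere convergence of $u^\varepsilon$. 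The positivity $\overline H(p)>0$ from Theorem~\ref{thm:effectiveburningvelocity} is convenient when building these barriers, mirroring the way the sign of $\overline F$ selects the direction of the sup/inf-convolutions in the proof of Theorem~\ref{thm:homogenizationofgeometricequations}.
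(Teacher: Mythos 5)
Your first two steps are sound, and they essentially reconstruct from the uniform flatness of Theorem \ref{thm:effectiveburningvelocity} what the paper simply quotes from \cite[Corollary 3.3]{GLXY22}: a bounded, $\mathbb{Z}^2$-periodic viscosity subsolution $\overline v$ and supersolution $\underline v$ of $F(D^2v,p+Dv,y)=\overline H(p)$. The scaling $u^\varepsilon(x,t)=\varepsilon U\left(\frac{x}{\varepsilon},\frac{t}{\varepsilon}\right)$, the periodicity of $\Psi$, and the large-time half-relaxed limits are all fine (modulo the routine lemma that time-independent relaxed limits of bounded parabolic sub/supersolutions satisfy the stationary inequalities).

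The genuine gap is the final step, and your diagnosis of the obstruction is not the right one. You claim Perron is blocked because the cell problem has no comparison principle and because your ordered pair is $\underline v\le\overline v$, i.e.\ supersolution below subsolution. But Ishii's Perron construction does not need a comparison principle to produce a (possibly discontinuous, bounded) solution; comparison is only needed for continuity and uniqueness, and a bounded periodic solution is all the hypothesis of Theorem \ref{thm:homogenizationofgeometricequations} uses. What Perron does need is a subsolution lying \emph{below} a supersolution, and this is exactly what the two-sided bound $|\Psi|\le C$ buys via the paper's one-line trick: since the equation has no zeroth-order term, sub- and supersolutions may be shifted by constants, so with $C_0$ the common $L^\infty$ bound one has $\overline v-C_0\le 0\le\underline v+C_0$, and Perron applied to the ordered pair $(\overline v-C_0,\ \underline v+C_0)$ (supremum of periodic subsolutions between them) yields the desired $\mathbb{Z}^2$-periodic solution. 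Your proposed substitute --- showing that the limsup $\overline v$ is simultaneously a supersolution by a barrier argument on parabolic cylinders --- is left incomplete at precisely the point you flag (controlling the lateral boundary, where the limsup gives only one-sided information), and there is no reason such a statement should hold without additional large-time convergence of $\Psi$; so as written the proposal does not establish the proposition.
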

\begin{proof}
We skip tracking the dependency on $d$ and $V$ as they are fixed. From \cite[Corollary 3.3]{GLXY22}, we see that there exist a viscosity subsolution $\overline{v}$ and a viscosity supersolution $\underline{v}$ to \eqref{eq:cellproblem} with the right-hand side replaced by $\overline{H}(p)$, and moreover, $$\sup_{\mathbb{R}^2}\left\{|\overline{v}|,|\underline{v}|\right\}\leq C_0$$ for some constant $C_0>0$ depending only on $p$, and the both are $\mathbb{Z}^2$-periodic. Then, $\overline{v}-C_0$ and $\underline{v}+C_0$ are also a viscosity sub and supersolution, respectively, and they satisfy $$-2C_0\leq \overline{v}-C_0\leq\underline{v}+C_0\leq 2C_0.$$
By Perron's method, namely by taking the supremum of $\mathbb{Z}^2$-periodic subsolutions between $\overline{v}-C_0$ and $\underline{v}+C_0$, we see that there exists a $\mathbb{Z}^2$-periodic solution $v$ to \eqref{eq:cellproblem} with the right-hand side replaced by $\overline{H}(p)$ satisfying
$$-2C_0\leq \overline{v}-C_0\leq v\leq\underline{v}+C_0\leq 2C_0.$$
\end{proof}

\section{Quantitative homogenization of the forced mean curvature equation}\label{sec:forcedmcf}

We turn our attention to the forced mean curvature equation in this section. We are interested in a rate of periodic homogenization of the flow. We provide the rate $O\left(\varepsilon^{1/8}\right)$ by proving Theorem \ref{thm:rateofforcedmcf} in this subsection under the coercivity condition \eqref{assumption:coercivity}.

\subsection{Proof of Theorem \ref{thm:rateofforcedmcf}}\label{subsec:provingrate}
Throughout this subsection, we let
\begin{align*}
c^{\eta}&:=\sup_{z\in\mathbb{R}^n:|z|\leq\eta}c(\cdot+z),\\ c_{\eta}&:=\inf_{z\in\mathbb{R}^n:|z|\leq\eta}c(\cdot+z),
\end{align*}
and
\begin{align*}
F(X,p,y)&:=-\mathrm{tr}\left\{\left(I_n-\widehat{p}\otimes\widehat{p}\right)X\right\}-c(y)|p|, \\
F^{\eta}(X,p,y)&:=-\mathrm{tr}\left\{\left(I_n-\widehat{p}\otimes\widehat{p}\right)X\right\}-c^{\eta}(y)|p|, \\
F_{\eta}(X,p,y)&:=-\mathrm{tr}\left\{\left(I_n-\widehat{p}\otimes\widehat{p}\right)X\right\}-c_{\eta}(y)|p|
\end{align*}
for $(X,p,y)\in S^n\times\left(\mathbb{R}^n\setminus\{0\}\right)\times\mathbb{R}^n$, $\eta\geq0$.

We follow the framework of \cite{C-DI01}. Before we get into the proof of Theorem \ref{thm:rateofforcedmcf}, we record Lipschitz estimates of solutions $u^{\varepsilon}$ and of perturbed approximate correctors $v^{\lambda,\eta},v^{\lambda}_{\eta}$ in the following proposition.

\begin{proposition}\label{prop:correctors}
Suppose that $c$ is $\mathbb{Z}^n$-periodic, Lipschitz continuous and satisfies \eqref{assumption:coercivity}.
\begin{itemize}
\item[(i)] Then, $c^{\eta},c_{\eta}$ are $\mathbb{Z}^n$-periodic, Lipschitz continuous and satisfiy \eqref{assumption:coercivity} (with the same $\delta>0$) as well for $\eta\geq0$.
\item[(ii)] Let $u_0$ be a function on $\mathbb{R}^n$ such that $\|Du_0\|_{C^{0,1}(\mathbb{R}^n)}<\infty$. For each $\varepsilon\in(0,1)$, there exists a unique viscosity solution $u^{\varepsilon}$ to \eqref{eq:uepsilon}, and $u^{\varepsilon}$ enjoys Lipschitz estimates:
\begin{align}
\|u^{\varepsilon}_t\|_{L^{\infty}(\mathbb{R}^n\times[0,\infty))}+\|Du^{\varepsilon}\|_{L^{\infty}(\mathbb{R}^n\times[0,\infty))}\leq M\label{line:Lipschitzestimates}
\end{align}
for some constant $M=M(n,\|c\|_{L^{\infty}(\mathbb{R}^n)},\|Du_0\|_{C^{0,1}(\mathbb{R}^n)},\delta)>0$.
\item[(iii)] Let $\lambda>0,\eta\geq0$. For each $p\in\mathbb{R}^n$, let $v^{\lambda,\eta}=v^{\lambda,\eta}(\cdot,p)$ denote the unique viscosity solution to
\begin{align*}
\lambda v^{\lambda,\eta}+F^{\eta}\left(D^2v^{\lambda,\eta},p+Dv^{\lambda,\eta},y\right)=0\quad\text{on }\mathbb{R}^n.
\end{align*}
Then, there exists a constant $C=C(n,\|c\|_{C^{0,1}(\mathbb{R}^n)},\delta)>0$ that for $x,y,p,q\in\mathbb{R}^n$,
\begin{equation}
\begin{cases}
|v^{\lambda,\eta}(x,p)-v^{\lambda,\eta}(y,p)|\leq C|p||x-y|, \quad & \\
|v^{\lambda,\eta}(x,p)-v^{\lambda,\eta}(x,q)|\leq \frac{C}{\lambda}|p-q|, \quad & \\
\|\lambda v^{\lambda,\eta}(\cdot,p)+\overline{F}(p)\|_{L^{\infty}(\mathbb{R}^n)}\leq C|p|(\lambda+\eta). \quad &
\end{cases}\label{line:correctors}
\end{equation}
Here, $\overline{F}(p)$ denotes the unique real number such that
\begin{align*}
F\left(D^2v,p+Dv,y\right)=\overline{F}(p)\quad\text{on }\mathbb{R}^n.
\end{align*}
admits a $\mathbb{Z}^n$-periodic viscosity solution. Also, the similar holds for a solution $v^{\lambda}_{\eta}=v^{\lambda}_{\eta}(\cdot,p)$ to
\begin{align*}
\lambda v^{\lambda}_{\eta}+F_{\eta}\left(D^2v^{\lambda}_{\eta},p+Dv^{\lambda}_{\eta},y\right)=0\quad\text{on }\mathbb{R}^n.
\end{align*}
\end{itemize}
\end{proposition}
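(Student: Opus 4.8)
The plan is to dispatch the three parts in order, with part~(i) carrying the only genuinely new input and parts~(ii)--(iii) following the scheme of \cite{LS05,C-DI01}; throughout, the overarching principle is that every comparison step must be closed using the coercivity-driven gradient bound, never naively, because the mean-curvature part of $F$ is discontinuous at $p=0$ and fails to be Lipschitz in the direction of $p$.

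\emph{Part (i).} Periodicity of $c^{\eta},c_{\eta}$ is immediate, and since they are suprema/infima of translates of $c$ they are Lipschitz with $\mathrm{Lip}(c^{\eta}),\mathrm{Lip}(c_{\eta})\le\mathrm{Lip}(c)$ and $\|c^{\eta}\|_{L^{\infty}},\|c_{\eta}\|_{L^{\infty}}\le\|c\|_{L^{\infty}}$. To propagate \eqref{assumption:coercivity} with the \emph{same} $\delta$, I would first upgrade \eqref{assumption:coercivity} from its a.e.\ form to the statement that $c(y)^{2}-(n-1)|\xi|\ge\delta$ for every $y$ and every $\xi$ in the Clarke subdifferential $\partial_{C}c(y)$: each such $\xi$ is a convex combination of limits $\lim_{k}Dc(y_{k})$ at differentiability points $y_{k}\to y$, and passing to the limit in $|Dc(y_{k})|\le(c(y_{k})^{2}-\delta)/(n-1)$ (using continuity of $c$) gives $|\xi|\le(c(y)^{2}-\delta)/(n-1)$. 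Now with $M(x)$ the nonempty compact set of maximizers of $z\mapsto c(x+z)$ over $\overline{B}_{\eta}(0)$, the calculus rule for Clarke gradients of a supremum gives $\partial_{C}c^{\eta}(x)\subseteq\overline{\mathrm{conv}}\{\partial_{C}c(x+z):z\in M(x)\}$; since $c(x+z)=c^{\eta}(x)$ for all $z\in M(x)$, every element of the right-hand side has norm $\le(c^{\eta}(x)^{2}-\delta)/(n-1)$, so $c^{\eta}(x)^{2}-(n-1)|\zeta|\ge\delta$ for all $\zeta\in\partial_{C}c^{\eta}(x)$, hence for $\zeta=Dc^{\eta}(x)$ at a.e.\ $x$. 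The argument for $c_{\eta}$ is identical; note that since all maximizers (resp.\ minimizers) attain the common value $c^{\eta}(x)$ (resp.\ $c_{\eta}(x)$), the sign of $c$ plays no role.

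\emph{Part (ii).} Existence and uniqueness of $u^{\varepsilon}$ is standard for geometric level-set equations (comparison from (V), then Perron). For the spatial Lipschitz bound I would run the usual doubling argument: for large $L$ the map $(x,y,t)\mapsto u^{\varepsilon}(x,t)-u^{\varepsilon}(y,t)-L|x-y|$ (with the standard localization) cannot have an interior positive maximum, since there the two viscosity inequalities combine with \eqref{assumption:coercivity}---which dominates precisely the curvature contribution, still valid with the $\varepsilon D^{2}$ scaling because $\varepsilon\le1$---to yield a contradiction once $L$ exceeds a threshold $L_{0}=L_{0}(n,\|c\|_{C^{0,1}},\delta)$; this is the estimate of \cite{LS05}. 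Hence $\|Du^{\varepsilon}(\cdot,t)\|_{L^{\infty}}\le\max\{\|Du_{0}\|_{L^{\infty}},L_{0}\}$ uniformly in $\varepsilon,t$. For the time bound, since $\|Du_{0}\|_{C^{0,1}}<\infty$ the functions $u_{0}(x)\pm Kt$ with $K=C(n,\|c\|_{L^{\infty}},\|Du_{0}\|_{C^{0,1}})$ are a super/subsolution of \eqref{eq:uepsilon} (using $|\mathrm{tr}\{(I_{n}-\widehat{p}\otimes\widehat{p})D^{2}u_{0}\}|\le n\|D^{2}u_{0}\|_{L^{\infty}}$ and handling $p=0$ via the relaxed operator), so $\|u^{\varepsilon}(\cdot,s)-u_{0}\|_{L^{\infty}}\le Ks$; applying the comparison principle to $u^{\varepsilon}(\cdot,\cdot+s)$ and $u^{\varepsilon}(\cdot,\cdot)$ gives $\|u^{\varepsilon}(\cdot,t+s)-u^{\varepsilon}(\cdot,t)\|_{L^{\infty}}\le Ks$ for all $t$, i.e.\ $\|u^{\varepsilon}_{t}\|_{L^{\infty}}\le K$. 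Taking $M=\max\{\|Du_{0}\|_{L^{\infty}},L_{0},K\}$ gives \eqref{line:Lipschitzestimates}.

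\emph{Part (iii).} Existence/uniqueness of $v^{\lambda,\eta}$ follows from comparison (the term $\lambda v$ makes the equation proper) and Perron's method between the constant sub/supersolutions $\mp\|c\|_{L^{\infty}}|p|/\lambda$. The key structural observation is geometric homogeneity: for $p\neq0$, replacing $v$ by $|p|\,v(\cdot,\widehat{p})$ leaves the corrector equation invariant, so by uniqueness $v^{\lambda,\eta}(\cdot,p)=|p|\,v^{\lambda,\eta}(\cdot,\widehat{p})$ and $\overline{F}(p)=|p|\,\overline{F}(\widehat{p})$ (the case $p=0$ being trivial, with $v^{\lambda,\eta}\equiv0$). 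The first bound in \eqref{line:correctors} thus reduces to a Lipschitz estimate for $v^{\lambda,\eta}(\cdot,\widehat{p})$, $|\widehat{p}|=1$, uniform in $\lambda>0,\eta\ge0$: this is the Ishii--Lions/\cite{LS05} estimate applied to the corrector equation with forcing $c^{\eta}$ (resp.\ $c_{\eta}$), which by part~(i) satisfies \eqref{assumption:coercivity} with the same $\delta$, so the constant depends only on $n,\|c\|_{C^{0,1}},\delta$. The second bound is the routine continuous-dependence-in-$p$ estimate: after rescaling it is Lipschitz continuity of $\widehat{p}\mapsto v^{\lambda,\eta}(\cdot,\widehat{p})$ on the sphere together with $\|v^{\lambda,\eta}(\cdot,\widehat{p})\|_{L^{\infty}}\le C/\lambda$, obtained by a comparison argument in which the a priori dangerous mean-curvature term is controlled via the gradient bound just proved. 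For the third bound, let $w^{\eta}$ be a $\mathbb{Z}^{n}$-periodic corrector for $F^{\eta}$ (effective constant $\overline{F^{\eta}}(p)$, which exists by \cite{LS05} and part~(i)), normalized so that $\|w^{\eta}\|_{L^{\infty}}\le C|p|$ by the oscillation bound; comparing $v^{\lambda,\eta}(\cdot,p)$ with $w^{\eta}-\overline{F^{\eta}}(p)/\lambda\pm\|w^{\eta}\|_{L^{\infty}}$ gives $\|\lambda v^{\lambda,\eta}(\cdot,p)+\overline{F^{\eta}}(p)\|_{L^{\infty}}\le C|p|\lambda$. Since $\|c^{\eta}-c\|_{L^{\infty}}\le\mathrm{Lip}(c)\,\eta$, an unperturbed corrector $w$ for $F$ satisfies the $F^{\eta}$-equation up to an error $\|c^{\eta}-c\|_{L^{\infty}}\,|p+Dw|\le C|p|\eta$ (using the uniform bound $|p+Dw|\le C|p|$), whence $|\overline{F^{\eta}}(p)-\overline{F}(p)|\le C|p|\eta$; adding the two estimates and carrying out the symmetric computation for $v^{\lambda}_{\eta}$ with $c_{\eta}$ finishes the proof.

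\emph{Main obstacle.} The crucial point is part~(i): preservation of \eqref{assumption:coercivity} with the same $\delta$ under the one-sided perturbations is exactly what renders all the constants in (ii)--(iii) independent of $\eta$ (and of $\lambda$), and it is this feature---unavailable for the curvature $G$-equation---that makes the quantitative argument work; apart from that, the recurring difficulty is that the mean-curvature term must be absorbed via coercivity at every comparison step, the scaling $v^{\lambda,\eta}(\cdot,p)=|p|\,v^{\lambda,\eta}(\cdot,\widehat{p})$ being the device that keeps the $|p|$-dependence explicit throughout.
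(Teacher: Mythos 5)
Parts (i), the time-derivative half of (ii), and the third estimate of (iii) are fine. Your Clarke-subdifferential argument for (i) is a heavier but correct version of what the paper dismisses as ``an easy consequence of convolution''; your barrier-plus-time-shift comparison giving $\|u^{\varepsilon}_t\|_{L^\infty}\le K$ is valid (the paper instead bounds the time derivative by Hessians, citing \cite[Appendix A.1]{DKY08}, \cite[Section 2]{J23}); and your derivation of $\|\lambda v^{\lambda,\eta}(\cdot,p)+\overline F(p)\|_{L^\infty}\le C|p|(\lambda+\eta)$ by comparing with a true Lipschitz corrector for $F^{\eta}$ (which exists by \cite{LS05} once (i) is known) and perturbing $c$ by at most $\mathrm{Lip}(c)\,\eta$ is correct and is essentially what the paper intends when it cites \cite[Lemma 3.2]{LS05}.

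The genuine gap is the spatial Lipschitz bound in (ii), together with the first two estimates of (iii) which you call ``routine''. The ``usual doubling argument'' with penalization $L|x-y|$ does not work for this operator: at a maximum with $x\neq y$ the Ishii--Jensen matrices satisfy $X\le Y$ (up to harmless terms), so the curvature contribution $\varepsilon\,\mathrm{tr}\{(I_n-\hat p\otimes\hat p)(X-Y)\}$ with $\hat p=\widehat{x-y}$ is merely nonpositive; the only strictly negative information carried by $L|x-y|$ (or by any concave function of $|x-y|$) sits in the direction $\widehat{x-y}$, which is exactly the direction annihilated by the projection $I_n-\hat p\otimes\hat p$. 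Meanwhile the forcing contributes the nonnegative term $\varepsilon^{-1}\mathrm{Lip}(c)\,|x-y|\,L$. Subtracting the two viscosity inequalities therefore gives $0\le(\text{nonpositive})+(\text{nonnegative})$, i.e.\ no contradiction for any $L$, and the coercivity \eqref{assumption:coercivity} never enters. In \cite{LS05,DKY08,J23} the condition $c^2-(n-1)|Dc|\ge\delta$ is exploited through a Bernstein computation on a vanishing-viscosity regularization: at an interior maximum of $|Du|^2$ one has $D^2u\,Du=0$, hence $\mathrm{rank}(D^2u)\le n-1$ and $|D^2u|^2\ge(\Delta u)^2/(n-1)$, and solving the equation for the curvature term produces the good term $c^2|Du|^2/(n-1)$ against the bad term $|Dc|\,|Du|^2$; this is where the constant $(n-1)$ and the $\delta$-dependence of $M$ come from, and it is why the paper proves (ii) via a vanishing viscosity parameter (\cite[Theorem 1.13]{T21}) rather than doubling. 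The same caution applies to your second estimate in (iii): since $r\mapsto\mathrm{tr}\{(I_n-\hat r\otimes\hat r)X\}$ is Lipschitz in $r$ only with constant of order $\|X\|/|r|$, a ``comparison argument controlled via the gradient bound just proved'' does not close (one needs Hessian control or the regularized-equation estimates); this is precisely the content the paper outsources to \cite[Lemma 3.2]{LS05}.
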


We refer to \cite[Lemma 3.2]{LS05} for (iii). The statement (i) is an easy consequence of convolution, and (ii) can be shown by considering a vanishing viscosity parameter \cite[Theorem 1.13]{T21}. During the derivation of \eqref{line:Lipschitzestimates}, the time-derivative is bounded by Hessians, for which we refer to \cite[Appendix A.1]{DKY08}, \cite[Section 2]{J23}.

Now, we prove Theorem \ref{thm:rateofforcedmcf}.

\begin{proof}[Proof of Theorem \ref{thm:rateofforcedmcf}]
Throughout the proof, $C$ denotes positive constants varying line by line, and they depend only on $n,\|c\|_{C^{0,1}(\mathbb{R}^n)},\|Du_0\|_{C^{0,1}\left(\mathbb{R}^n\right)},\delta$, which we call the data from now on.

We first show that
\begin{align}
u^{\varepsilon}(x,t)-u(x,t)\leq C(1+T)\varepsilon^{1/8}\label{line:upperbound} 
\end{align}
for $(x,t)\in\mathbb{R}^n\times[0,T]$.

\medskip

\textbf{Step 0: The framework of doubling variable method with approximate correctors for quantification \cite{C-DI01}.}

\medskip

For a given $\varepsilon\in(0,1)$, we let $\eta=\varepsilon^{\beta}$ and let
\begin{align*}
\Phi(x,y,z,t,s):=u^{\varepsilon}(x,t)&-u(y,s)-\varepsilon v^{\lambda,\eta}\left(\frac{x}{\varepsilon},\frac{z-y}{\varepsilon^{\beta}}\right)\\
\qquad\qquad\qquad&-\frac{|x-y|^2+|t-s|^2}{2\varepsilon^{\beta}}-\frac{|x-z|^2}{2\varepsilon^{\beta}}-K(t+s)-\gamma_1\langle y\rangle,
\end{align*}
where $\lambda=\varepsilon^{\theta},\ K=K_1\varepsilon^{\beta},\ \gamma_1\in(0,\varepsilon^{\beta}]$, and $\theta,\beta\in(0,1], K_1>0$ are constants to be determined. Then, the global maximum of $\Phi$ on $\mathbb{R}^{3n}\times[0,T]^2$ is attained at a certain point, say at $(\hat{x},\hat{y},\hat{z},\hat{t},\hat{s})\in\mathbb{R}^{3n}\times[0,T]^2$ (abusing the notation $\hat{p}=\frac{p}{|p|}$ for $p\in\mathbb{R}^n$).

From $\Phi(\hat{x},\hat{y},\hat{z},\hat{t},\hat{s})\geq\Phi(\hat{x},\hat{y},\hat{x},\hat{t},\hat{s})$ with \eqref{line:correctors}, we have
$$
\frac{|\hat{x}-\hat{z}|^2}{2\varepsilon^{\beta}}\leq\varepsilon\left(v^{\lambda,\eta}\left(\frac{\hat{x}}{\varepsilon},\frac{\hat{x}-\hat{y}}{\varepsilon^{\beta}}\right)-v^{\lambda,\eta}\left(\frac{\hat{x}}{\varepsilon},\frac{\hat{z}-\hat{y}}{\varepsilon^{\beta}}\right)\right)\leq C\varepsilon^{1-\theta-\beta}|\hat{x}-\hat{z}|,
$$
which gives $|\hat{x}-\hat{z}|\leq C\varepsilon^{1-\theta}$. Also, from $\Phi(\hat{x},\hat{y},\hat{z},\hat{t},\hat{s})\geq\Phi(\hat{x},\hat{x},\hat{x},\hat{t},\hat{s})$, we get
\begin{align*}
\frac{|\hat{x}-\hat{y}|^2+|\hat{x}-\hat{z}|^2}{2\varepsilon^{\beta}}&\leq u(\hat{x},\hat{s})-u(\hat{y},\hat{s})+\varepsilon\left(v^{\lambda,\eta}\left(\frac{\hat{x}}{\varepsilon},0\right)-v^{\lambda,\eta}\left(\frac{\hat{x}}{\varepsilon},\frac{\hat{z}-\hat{y}}{\varepsilon^{\beta}}\right)\right)\\
&\leq C\left(|\hat{x}-\hat{y}|+|\hat{y}-\hat{z}|\right),
\end{align*}
as long as $\theta+\beta\leq1$. This yields $|\hat{x}-\hat{y}|+|\hat{y}-\hat{z}|\leq C\varepsilon^{\beta}$. Lastly, $\Phi(\hat{x},\hat{y},\hat{z},\hat{t},\hat{s})\geq\Phi(\hat{x},\hat{y},\hat{z},\hat{t},\hat{t})$ and $\Phi(\hat{x},\hat{y},\hat{z},\hat{t},\hat{s})\geq\Phi(\hat{x},\hat{y},\hat{z},\hat{s},\hat{s})$ give $|\hat{t}-\hat{s}|\leq C\varepsilon^{\beta}$. Thus, we have
\begin{equation}
\begin{cases}
|\hat{x}-\hat{z}|\leq C\varepsilon^{1-\theta}, \quad & \\
|\hat{x}-\hat{y}|+|\hat{y}-\hat{z}|+|\hat{t}-\hat{s}|\leq C\varepsilon^{\beta}. \quad & 
\end{cases}\label{line:hats}
\end{equation}

\medskip

\textbf{Claim. For $\beta=\frac{1}{8}$, $\theta\in\left[\frac{1}{8},\frac{3}{8}\right]$, there exists $K_1>0$ depending only on the data such that either $\hat{t}=0$ or $\hat{s}=0$.}

Once we establish this claim, we then obtain \eqref{line:upperbound}. Indeed, by \eqref{line:Lipschitzestimates}, \eqref{line:hats},
\begin{align*}
\Phi(x,x,x,t,t)\leq\Phi(\hat{x},\hat{y},\hat{z},\hat{t},\hat{s})\leq C\varepsilon^{1/8}.
\end{align*}
for all $(x,t)\in\mathbb{R}^n\times[0,T]$ in either case. Therefore, for all $(x,t)\in\mathbb{R}^n\times[0,T]$,
\begin{align*}
u^{\varepsilon}(x,t)-u(x,t)\leq\varepsilon v^{\lambda,\eta}\left(\frac{\hat{x}}{\varepsilon},0\right)+K_1\varepsilon^{1/8}t+\gamma_1\langle x\rangle+ C\varepsilon^{1/8}.
\end{align*}
Since it holds for arbitrary $\gamma_1\in(0,\varepsilon^{1/8}]$, we deduce \eqref{line:upperbound}.

\medskip

From now on, we suppose that $\hat{t},\hat{s}>0$. We postpone the explicit choice of $\beta,\theta$.

\medskip

By the fact that $(y,s)\mapsto\Phi(\hat{x},y,\hat{z},\hat{t},s)$ attains a maximum at $(\hat{y},\hat{s})$ and by the supersolution test of $u$ to \eqref{eq:ueffective}, for some
\begin{align}
-K+\frac{\hat{t}-\hat{s}}{\varepsilon^{\beta}}+\overline{F}\left(\frac{\hat{x}-\hat{y}}{\varepsilon^{\beta}}+\gamma_1\frac{\hat{y}}{\langle \hat{y}\rangle}-q\right)\geq0,\label{line:supersolutiontest}
\end{align}
for some $q\in\overline{\mathcal{D}}^{1,-}\left(\varepsilon v^{\lambda,\eta}\left(\frac{\hat{x}}{\varepsilon},\frac{\hat{z}-\cdot}{\varepsilon^{\beta}}\right)\right)$. See \cite[Lemma 2.4]{C-DI01} for the existence of $q$. Note that, by \eqref{line:correctors}, we have $|q|\leq C\varepsilon^{1-\theta-\beta}$.

\medskip

As the direction of the sup/inf-involution \cite[Lemma 13.1]{CM14} follows the sign of $\hat{t}-\hat{s}$, which shall be explained, we divide the cases accordingly.

\medskip

\textbf{Case 1. $\hat{t}\leq\hat{s}$.}

\medskip

\textbf{Step 1.1: Sup-involutions of auxiliary functions and their maximizers.}

\medskip

Let
\begin{align*}
\Psi(x,\xi,z,t)&:=\left(u^{\varepsilon}(x,t)-\frac{|x-\hat{y}|^2+|x-z|^2-2x\cdot(z-\hat{y})}{2\varepsilon^{\beta}}\right) \\
&\qquad\qquad-\varepsilon\left(v^{\lambda,\eta}\left(\xi,\frac{z-\hat{y}}{\varepsilon^{\beta}}\right)+\frac{z-\hat{y}}{\varepsilon^{\beta}}\cdot\xi\right)-(x-\varepsilon\xi)\cdot\frac{z-\hat{y}}{\varepsilon^{\beta}}\\
&\qquad\qquad\qquad\qquad-\frac{|x-\varepsilon\xi|^2}{2\alpha}-\frac{|z-\hat{z}|^2}{4\varepsilon^{\beta}}-\frac{|t-\hat{s}|^2+|t-\hat{t}|^2}{2\varepsilon^{\beta}}-Kt.
\end{align*}
Note that this auxiliary function is nothing but the terms of $\Phi$ involving $(x,z,t)$, keeping $(\hat{y},\hat{s})$ fixed, if we ignore for the term $-\frac{|z-\hat{z}|^2}{4\varepsilon^{\beta}}-\frac{|t-\hat{t}|^2}{2\varepsilon^{\beta}}$. This additional term is attached to quantify the distance between maximizers $(\overline{z}_{\alpha},\overline{t}_{\alpha})$, which will be taken soon, and $(\hat{z},\hat{t})$.

Let
\begin{align*}
\overline{\Psi}^{\mu}(x,\xi,z,t)&:=\sup_{w\in\overline{B}_{\varepsilon\mu}(x)}\left(u^{\varepsilon}(w,t)-\frac{|w-\hat{y}|^2+|w-z|^2-2w\cdot(z-\hat{y})}{2\varepsilon^{\beta}}\right) \\
&\qquad\qquad-\varepsilon\left(v^{\lambda,\eta}\left(\xi,\frac{z-\hat{y}}{\varepsilon^{\beta}}\right)+\frac{z-\hat{y}}{\varepsilon^{\beta}}\cdot\xi\right)-(x-\varepsilon\xi)\cdot\frac{z-\hat{y}}{\varepsilon^{\beta}}\\
&\qquad\qquad\qquad\qquad-\frac{|x-\varepsilon\xi|^2}{2\alpha}-\frac{|z-\hat{z}|^2}{4\varepsilon^{\beta}}-\frac{|t-\hat{s}|^2+|t-\hat{t}|^2}{2\varepsilon^{\beta}}-Kt,
\end{align*}
where $\mu=\frac{1}{2}\eta=\frac{1}{2}\varepsilon^{\beta}$, and $\alpha>0$ is to be determined later. Then, $\overline{\Psi}^{\mu}$ attains a global maximum, say at $(\overline{x}_{\alpha},\overline{\xi}_{\alpha},\overline{z}_{\alpha},\overline{t}_{\alpha})\in\mathbb{R}^{3n}\times[0,T]$.

\medskip

From $\overline{\Psi}^{\mu}(\overline{x}_{\alpha},\overline{\xi}_{\alpha},\overline{z}_{\alpha},\overline{t}_{\alpha})\geq\overline{\Psi}^{\mu}(\overline{x}_{\alpha},\frac{\overline{x}_{\alpha}}{\varepsilon},\overline{z}_{\alpha},\overline{t}_{\alpha})$ with \eqref{line:correctors}, \eqref{line:hats}, we have
\begin{align*}
\frac{|\overline{x}_{\alpha}-\varepsilon\overline{\xi}_{\alpha}|^2}{2\alpha}&\leq\varepsilon\left(v^{\lambda,\eta}\left(\frac{\overline{x}_{\alpha}}{\varepsilon},\frac{\overline{z}_{\alpha}-\hat{y}}{\varepsilon^{\beta}}\right)-v^{\lambda,\eta}\left(\overline{\xi}_{\alpha},\frac{\overline{z}_{\alpha}-\hat{y}}{\varepsilon^{\beta}}\right)\right)\\
&\leq C|\overline{x}_{\alpha}-\varepsilon\overline{\xi}_{\alpha}|\left|\frac{\overline{z}_{\alpha}-\hat{y}}{\varepsilon^{\beta}}\right|,
\end{align*}
and in turn,
\begin{align}
|\overline{x}_{\alpha}-\varepsilon\overline{\xi}_{\alpha}|\leq C\alpha\left|\frac{\overline{z}_{\alpha}-\hat{y}}{\varepsilon^{\beta}}\right|.\label{line:xi}
\end{align}

\medskip

Now, we estimate $|\overline{z}_{\alpha}-\hat{z}|$ and $|\overline{t}_{\alpha}-\hat{t}|$ by using the term $-\frac{|z-\hat{z}|^2}{4\varepsilon^{\beta}}-\frac{|t-\hat{t}|^2}{2\varepsilon^{\beta}}$.
\begin{align*}
&\overline{\Psi}^{\mu}(\overline{x}_{\alpha},\overline{\xi}_{\alpha},\overline{z}_{\alpha},\overline{t}_{\alpha})\\
&\leq C\varepsilon\mu+u^{\varepsilon}(\overline{x}_{\alpha},\overline{t}_{\alpha})+\sup_{w\in\overline{B}_{\varepsilon\mu}(\overline{x}_{\alpha})}\left(-\frac{|w-\hat{y}|^2+|w-\overline{z}_{\alpha}|^2-2(w-\overline{x}_{\alpha})\cdot(\overline{z}_{\alpha}-\hat{y})}{2\varepsilon^{\beta}}\right)\\
&\quad-\varepsilon v^{\varepsilon,\eta}\left(\overline{\xi}_{\alpha},\frac{\overline{z}_{\alpha}-\hat{y}}{\varepsilon^{\beta}}\right)-\frac{|\overline{x}_{\alpha}-\varepsilon\overline{\xi}_{\alpha}|^2}{2\alpha}-\frac{|\overline{z}_{\alpha}-\hat{z}|^2}{4\varepsilon^{\beta}}-\frac{|\overline{t}_{\alpha}-\hat{s}|^2+|\overline{t}_{\alpha}-\hat{t}|^2}{2\varepsilon^{\beta}}-K\overline{t}_{\alpha}\\
&\leq -\frac{|\overline{z}_{\alpha}-\hat{z}|^2+|\overline{t}_{\alpha}-\hat{t}|^2}{4\varepsilon^{\beta}}+C\varepsilon\mu+\varepsilon\left(v^{\lambda,\eta}\left(\frac{\overline{x}_{\alpha}}{\varepsilon},\frac{\overline{z}_{\alpha}-\hat{y}}{\varepsilon^{\beta}}\right)-v^{\lambda,\eta}\left(\overline{\xi}_{\alpha},\frac{\overline{z}_{\alpha}-\hat{y}}{\varepsilon^{\beta}}\right)\right)\\
&\quad+\sup_{w\in\overline{B}_{\varepsilon\mu}(\overline{x}_{\alpha})}\left(-\frac{2(w-\overline{x}_{\alpha})\cdot\left((w-\overline{x}_{\alpha})-2(\overline{z}_{\alpha}-\overline{x}_{\alpha})\right)}{2\varepsilon^{\beta}}\right)\\
&\ \ \quad+u^{\varepsilon}(\overline{x}_{\alpha},\overline{t}_{\alpha})-\frac{|\overline{x}_{\alpha}-\hat{y}|^2}{2\varepsilon^{\beta}}-\frac{|\overline{x}_{\alpha}-\overline{z}_{\alpha}|^2}{2\varepsilon^{\beta}}-\varepsilon v^{\varepsilon,\eta}\left(\frac{\overline{x}_{\alpha}}{\varepsilon},\frac{\overline{z}_{\alpha}-\hat{y}}{\varepsilon^{\beta}}\right)-\frac{|\overline{t}_{\alpha}-\hat{s}|^2}{2\varepsilon^{\beta}}-K\overline{t}_{\alpha}
\end{align*}
Here, we used the fact that $u^{\varepsilon}(w,\overline{t}_{\alpha})\leq u^{\varepsilon}(\overline{x}_{\alpha},\overline{t}_{\alpha})+C\varepsilon\mu$ for $w\in\overline{B}_{\varepsilon\mu}(\overline{x}_{\alpha})$ in the first inequality, and used the fact that
\begin{align*}
&(|w-\hat{y}|^2-|\overline{x}_{\alpha}-\hat{y}|^2)+(|w-\overline{z}_{\alpha}|^2-|\overline{x}_{\alpha}-\overline{z}_{\alpha}|^2)-2(w-\overline{x}_{\alpha})\cdot(\overline{z}_{\alpha}-\hat{y})\\
&=2(w-\overline{x}_{\alpha})\cdot\left((w-\overline{x}_{\alpha})-2(\overline{z}_{\alpha}-\overline{x}_{\alpha})\right),
\end{align*}
in the second inequality. The others are rearrangement of the terms. Now, from $\Phi(\hat{x},\hat{y},\hat{z},\hat{t},\hat{s})\geq\Phi(\overline{x}_{\alpha},\hat{y},\overline{z}_{\alpha},\overline{t}_{\alpha},\hat{s})$, we have
\begin{align*}
&\overline{\Psi}^{\mu}(\overline{x}_{\alpha},\overline{\xi}_{\alpha},\overline{z}_{\alpha},\overline{t}_{\alpha})\\
&\leq -\frac{|\overline{z}_{\alpha}-\hat{z}|^2+|\overline{t}_{\alpha}-\hat{t}|^2}{4\varepsilon^{\beta}}+C\varepsilon\mu+C|\overline{x}_{\alpha}-\varepsilon\overline{\xi}_{\alpha}|\left|\frac{\overline{z}_{\alpha}-\hat{y}}{\varepsilon^{\beta}}\right|+\varepsilon^{1-\beta}\mu(\varepsilon\mu+2\left|\overline{x}_{\alpha}-\overline{z}_{\alpha}\right|)\\
&\qquad\qquad+\underbrace{u^{\varepsilon}(\hat{x},\hat{t})-\varepsilon v^{\varepsilon,\eta}\left(\frac{\hat{x}}{\varepsilon},\frac{\hat{z}-\hat{y}}{\varepsilon^{\beta}}\right)-\frac{|\hat{x}-\hat{y}|^2}{2\varepsilon^{\beta}}-\frac{|\hat{x}-\hat{z}|^2}{2\varepsilon^{\beta}}-\frac{|\hat{t}-\hat{s}|^2}{2\varepsilon^{\beta}}-K\hat{t}}_{=\Psi\left(\hat{x},\frac{\hat{x}}{\varepsilon},\hat{z},\hat{t}\right)\leq\overline{\Psi}^{\mu}\left(\hat{x},\frac{\hat{x}}{\varepsilon},\hat{z},\hat{t}\right)\leq\overline{\Psi}^{\mu}(\overline{x}_{\alpha},\overline{\xi}_{\alpha},\overline{z}_{\alpha},\overline{t}_{\alpha})},
\end{align*}
and thus, by \eqref{line:xi} and by the inequality $\left|\frac{\overline{z}_{\alpha}-\hat{y}}{\varepsilon^{\beta}}\right|^2\leq2\left(C+\left|\frac{\overline{z}_{\alpha}-\hat{z}}{\varepsilon^{\beta}}\right|^2\right)$ from \eqref{line:hats},
\begin{align}
\frac{|\overline{z}_{\alpha}-\hat{z}|^2+|\overline{t}_{\alpha}-\hat{t}|^2}{4\varepsilon^{\beta}}&\leq C\varepsilon\mu+C\alpha+C\alpha\left|\frac{\overline{z}_{\alpha}-\hat{z}}{\varepsilon^{\beta}}\right|^2+\varepsilon^{2-\beta}\mu+2\varepsilon^{1-\beta}\mu\left|\overline{x}_{\alpha}-\overline{z}_{\alpha}\right|.\label{line:tildeandhat}
\end{align}

Choose $\overline{x}_{\alpha}^1\in\overline{B}_{\varepsilon\mu}(\overline{x}_{\alpha})$ such that the supremum
\begin{align*}
\sup_{w\in\overline{B}_{\varepsilon\mu}(\overline{x}_{\alpha})}\left(u^{\varepsilon}(w,\overline{t}_{\alpha})-\frac{|w-\hat{y}|^2+|w-\overline{z}_{\alpha}|^2-2(w-\overline{x}_{\alpha})\cdot(\overline{z}_{\alpha}-\hat{y})}{2\varepsilon^{\beta}}\right)
\end{align*}
is attained. From $\overline{\Psi}^{\mu}(\overline{x}_{\alpha},\overline{\xi}_{\alpha},\overline{z}_{\alpha},\overline{t}_{\alpha})\geq\overline{\Psi}^{\mu}(\overline{x}_{\alpha},\overline{\xi}_{\alpha},\overline{x}_{\alpha},\overline{t}_{\alpha})$, we get, by \eqref{line:correctors},
\begin{align*}
&2|\overline{x}_{\alpha}^1-\overline{z}_{\alpha}|^2-2|\overline{x}_{\alpha}^1-\overline{x}_{\alpha}|^2+|\overline{z}_{\alpha}-\hat{z}|^2-|\overline{x}_{\alpha}-\hat{z}|^2\\
&\leq4(\overline{x}_{\alpha}^1-\overline{x}_{\alpha})\cdot(\overline{z}_{\alpha}-\overline{x}_{\alpha})+4\varepsilon^{1+\beta}\left(v^{\lambda,\eta}\left(\overline{\xi}_{\alpha},\frac{\overline{x}_{\alpha}-\hat{y}}{\varepsilon^{\beta}}\right)-v^{\lambda,\eta}\left(\overline{\xi}_{\alpha},\frac{\overline{z}_{\alpha}-\hat{y}}{\varepsilon^{\beta}}\right)\right)\\
&\leq4(\overline{x}_{\alpha}^1-\overline{x}_{\alpha})\cdot(\overline{z}_{\alpha}-\overline{x}_{\alpha})+C\varepsilon^{1-\theta}|\overline{x}_{\alpha}-\overline{z}_{\alpha}|.
\end{align*}
By elementary calculations using
\begin{equation*}
\begin{cases}
|\overline{x}_{\alpha}^1-\overline{z}_{\alpha}|^2=|\overline{x}_{\alpha}^1-\overline{x}_{\alpha}|^2+2(\overline{x}_{\alpha}^1-\overline{x}_{\alpha})\cdot(\overline{x}_{\alpha}-\overline{z}_{\alpha})+|\overline{x}_{\alpha}-\overline{z}_{\alpha}|^2, \quad & \\
|\overline{x}_{\alpha}-\hat{z}|^2=|\overline{x}_{\alpha}-\overline{z}_{\alpha}|^2+2(\overline{x}_{\alpha}-\overline{z}_{\alpha})\cdot(\overline{z}_{\alpha}-\hat{z})+|\overline{z}_{\alpha}-\hat{z}|^2, \quad & 
\end{cases}
\end{equation*}
we see that there exists a constant $C_0>0$ depending only on the data such that \eqref{line:xi}, \eqref{line:tildeandhat} hold with $C_0$ in place of $C$,
\begin{align}
|\overline{x}_{\alpha}-\overline{z}_{\alpha}|\leq C_0\left(\varepsilon^{1-\theta}+|\overline{z}_{\alpha}-\hat{z}|\right).\label{line:xalphazalpha}
\end{align}

\medskip

Now, we consider $\alpha\in\left(0,\frac{1}{8C_0}\varepsilon^{1+\beta}\right)$ so that
\begin{align*}
\frac{|\overline{z}_{\alpha}-\hat{z}|^2}{8\varepsilon^{\beta}}&\leq C\left(\varepsilon^{1+\beta}+\varepsilon\left|\overline{x}_{\alpha}-\overline{z}_{\alpha}\right|\right)
\end{align*}
from \eqref{line:tildeandhat} with another constant $C>0$. Combining with \eqref{line:xalphazalpha}, we obtain
\begin{equation}
\begin{cases}
|\overline{z}_{\alpha}-\hat{z}|\leq C\varepsilon^{\frac{1}{2}+\beta}, \quad & \\
|\overline{x}_{\alpha}-\overline{z}_{\alpha}|\leq C\varepsilon^{\min\left\{\frac{1}{2}+\beta,1-\theta\right\}}, \quad & 
\end{cases}\label{line:zalphazhat}
\end{equation}
which in turn implies, again by \eqref{line:tildeandhat},
\begin{align}
|\overline{t}_{\alpha}-\hat{t}|\leq C\varepsilon^{\frac{1}{2}+\beta}.\label{line:talphathat}
\end{align}
Also, by \eqref{line:hats}, \eqref{line:tildeandhat} and \eqref{line:zalphazhat}, there exists a constant $C_1>4C_0$ depending only on the data such that
\begin{align*}
|\overline{x}_{\alpha}-\varepsilon\overline{\xi}_{\alpha}|\leq C_0\alpha\left|\frac{\overline{z}_{\alpha}-\hat{y}}{\varepsilon^{\beta}}\right|\leq C\alpha\left(1+\left|\frac{\overline{z}_{\alpha}-\hat{z}}{\varepsilon^{\beta}}\right|\right)\leq C_1\alpha.
\end{align*}
Now, we take $\alpha=\frac{1}{2C_1}\varepsilon^{1+\beta}=\frac{1}{2C_1}\varepsilon\eta\in\left(0,\frac{1}{8C_0}\varepsilon^{1+\beta}\right)$ so that
\begin{align}
\left|\frac{\overline{x}_{\alpha}}{\varepsilon}-\overline{\xi}_{\alpha}\right|\leq\frac{1}{2}\eta.\label{line:etabound1}
\end{align}

\medskip

\textbf{Step 1.2: The viscosity inequalities from the Crandall-Ishii lemma.}

\medskip

If $\overline{t}_{\alpha}=0$, we then necessarily have $\hat{t},\hat{s}\leq C\varepsilon^{\beta}$, which implies \eqref{line:upperbound} as before. We assume the other case $\overline{t}_{\alpha}>0$.

Let
\begin{equation*}
\begin{cases}
h(x,\xi,t):=\frac{|x-\varepsilon\xi|^2}{2\alpha}+(x-\varepsilon\xi)\cdot\frac{\overline{z}_{\alpha}-\hat{y}}{\varepsilon^{\beta}}+\frac{|t-\hat{s}|^2+|t-\hat{t}|^2}{2\varepsilon^{\beta}}+Kt+\frac{|\overline{z}_{\alpha}-\hat{z}|^2}{4\varepsilon^{\beta}}, \quad & \\
\overline{u}^{\varepsilon,\mu}(x,t):=\sup_{w\in\overline{B}_{\varepsilon\mu}(x)}\left(u^{\varepsilon}(w,t)-\frac{|w-\hat{y}|^2+|w-\overline{z}_{\alpha}|^2-2w\cdot(\overline{z}_{\alpha}-\hat{y})}{2\varepsilon^{\beta}}\right), \quad & \\
\widetilde{\ell}^{\varepsilon}(\xi):=v^{\lambda,\eta}\left(\xi,\frac{\overline{z}_{\alpha}-\hat{y}}{\varepsilon^{\beta}}\right)+\frac{\overline{z}_{\alpha}-\hat{y}}{\varepsilon^{\beta}}\cdot\xi \quad &
\end{cases}
\end{equation*}
so that
\begin{align*}
(x,\xi,t)\longmapsto\overline{\Psi}^{\mu}(x,\xi,\overline{z}_{\alpha},t)=\overline{u}^{\varepsilon,\mu}(x,t)-\varepsilon\widetilde{\ell}^{\varepsilon}(\xi)-h(x,\xi,t)
\end{align*}
attains a global maximum at $(\overline{x}_{\alpha},\overline{\xi}_{\alpha},\overline{t}_{\alpha})\in\mathbb{R}^{2n}\times(0,T]$.

\medskip

Since \cite[(8.5)]{CIL92} holds for our $F$ and for $\overline{u}^{\varepsilon,\mu}$, $\varepsilon\widetilde{\ell}^{\varepsilon}$, we can apply the Crandall-Ishii lemma \cite[Theorem 8.3]{CIL92} to see that for every $\gamma>0$, there exist $X,Y\in S^n$ such that
\begin{equation}
\begin{cases}
(b_1,p,X)\in\overline{\mathcal{P}}^{2,+}\overline{u}^{\varepsilon,\mu}(\overline{x}_{\alpha},\overline{t}_{\alpha}), \quad &\\
(b_2,q,Y)\in\overline{\mathcal{P}}^{2,-}\varepsilon\widetilde{\ell}^{\varepsilon}(\overline{\xi}_{\alpha}), \quad &\\
b_1=b_1-b_2=h_t(\overline{x}_{\alpha},\overline{\xi}_{\alpha},\overline{t}_{\alpha})=K+\frac{\overline{t}_{\alpha}-\hat{s}}{\varepsilon^{\beta}}+\frac{\overline{t}_{\alpha}-\hat{t}}{\varepsilon^{\beta}}, \quad &\\
-\left(\frac{1}{\gamma}+\|A\|\right)I_{2n}\leq\begin{pmatrix}X &0 \\0 &-Y \end{pmatrix}\leq A+\gamma A^2,
\end{cases}\label{line:semijetsforced}
\end{equation}
where
\begin{align*}
p&:=D_xh(\overline{x}_{\alpha},\overline{\xi}_{\alpha},\overline{t}_{\alpha})=\frac{\overline{x}_{\alpha}-\varepsilon\overline{\xi}_{\alpha}}{\alpha}+\frac{\overline{z}_{\alpha}-\hat{y}}{\varepsilon^{\beta}},\\
q&:=-D_{\xi}h(\overline{x}_{\alpha},\overline{\xi}_{\alpha},\overline{t}_{\alpha})=\varepsilon p,\\
A&:=D^2_{(x,\xi)}h(\overline{x}_{\alpha},\overline{\xi}_{\alpha},\overline{t}_{\alpha})=\frac{1}{\alpha}\begin{pmatrix}I_n & -\varepsilon I_n \\ -\varepsilon I_n & \varepsilon^2 I_n \end{pmatrix}.
\end{align*}
As $\|A\|$ is comparable to $\frac{1}{\alpha}$, we take $\gamma=\alpha$ so that we can deduce from \eqref{line:semijetsforced} that
\begin{equation}
\begin{cases}
-\frac{C}{\varepsilon^{\beta}}I_n\leq\varepsilon X\leq\frac{C}{\varepsilon^{\beta}}I_n, \quad & \\
\quad\hspace{2mm} \varepsilon X\leq\frac{1}{\varepsilon} Y \quad & 
\end{cases}\label{line:hessiansforced}
\end{equation}
with a constant $C>0$ depending only on the data.

\medskip

By the choice of $\overline{x}_{\alpha}^1\in\overline{B}_{\varepsilon\mu}(\overline{x}_{\alpha})$ and by the definition of $\overline{u}^{\varepsilon,\mu}$, we can apply \cite[Lemma 13.2]{CM14} to obtain
\begin{align*}
\left(b_1,p+\frac{2(\overline{x}_{\alpha}^1-\overline{z}_{\alpha})}{\varepsilon^{\beta}},X+\frac{2}{\varepsilon^{\beta}}I_n\right)\in\overline{\mathcal{P}}^{2,+}u^{\varepsilon}(\overline{x}_{\alpha}^1,\overline{t}_{\alpha}),
\end{align*}
which gives, from the subsolution test of $u^{\varepsilon}$ and \eqref{line:semijetsforced},
\begin{align}
K+\frac{\overline{t}_{\alpha}-\hat{s}}{\varepsilon^{\beta}}+\frac{\overline{t}_{\alpha}-\hat{t}}{\varepsilon^{\beta}}+F_*\left(\varepsilon X+2\varepsilon^{1-\beta}I_n,p+\frac{2(\overline{x}_{\alpha}^1-\overline{z}_{\alpha})}{\varepsilon^{\beta}},\frac{\overline{x}_{\alpha}^1}{\varepsilon}\right)\leq0.\label{line:subsolutiontest1}
\end{align}

Also, from the supersolution test of $\widetilde{\ell}^{\varepsilon}$ and \eqref{line:semijetsforced}, we have
\begin{align}
\lambda v^{\lambda,\eta}\left(\overline{\xi}_{\alpha},\frac{\overline{z}_{\alpha}-\hat{y}}{\varepsilon^{\beta}}\right)+\left(F^{\eta}\right)^*\left(\frac{1}{\varepsilon}Y,p,\overline{\xi}_{\alpha}\right)\geq0.\label{line:supersolutiontest1}
\end{align}

\medskip

\textbf{Step 1.3: Separation of the gradient $p$ from the origin.}

\medskip

By \eqref{line:correctors}, \eqref{line:supersolutiontest1} and by the fact that $\left|\frac{\overline{z}_{\alpha}-\hat{y}}{\varepsilon^{\beta}}\right|\leq C$, we have
\begin{align*}
C(\varepsilon^{\theta}+\varepsilon^{\beta})-\overline{F}\left(\frac{\overline{z}_{\alpha}-\hat{y}}{\varepsilon^{\beta}}\right)&\geq\lambda v^{\lambda,\eta}\left(\overline{\xi}_{\alpha},\frac{\overline{z}_{\alpha}-\hat{y}}{\varepsilon^{\beta}}\right)\\
&\geq\left(-F^{\eta}\right)_*\left(\frac{1}{\varepsilon}Y,p,\overline{\xi}_{\alpha}\right)\\
&\geq\left(-F^{\eta}\right)_*\left(\varepsilon X,p,\overline{\xi}_{\alpha}\right)\\
&\geq-\frac{C}{\mu}|p|,
\end{align*}
where we used \cite[Lemma 13.1]{CM14} for $(b_1,p,X)\in\overline{\mathcal{P}}^{2,+}\overline{u}^{\varepsilon,\mu}(\overline{x}_{\alpha},\overline{t}_{\alpha})$ in the last inequality. On the other hand, by \eqref{line:correctors}, \eqref{line:supersolutiontest}, \eqref{line:hats}, \eqref{line:zalphazhat}, it holds that
\begin{align*}
-K&\geq-K+\frac{\hat{t}-\hat{s}}{\varepsilon^{\beta}}\\
&\geq-\overline{F}\left(\frac{\hat{x}-\hat{y}}{\varepsilon^{\beta}}+\gamma_1\frac{\hat{y}}{\langle\hat{y}\rangle}-q\right)\\
&\geq-\overline{F}\left(\frac{\overline{z}_{\alpha}-\hat{y}}{\varepsilon^{\beta}}\right)-C\varepsilon^{\min\left\{\frac{1}{2},\beta,1-\theta-\beta\right\}}.
\end{align*}
Linking the two inequalities, we see that there exists a constant $C>0$ depending only on the data such that
\begin{align*}
|p|\geq \varepsilon^{\beta}\left(C^{-1}K_1\varepsilon^{\beta}-C\varepsilon^{\min\left\{\frac{1}{2},\theta,\beta,1-\theta-\beta\right\}}\right).
\end{align*}
We require that $K_1>C^2$ and $\beta\leq\min\left\{\frac{1}{2},\theta,1-\theta-\beta\right\}$ so that
\begin{align}
|p|\geq\left(C^{-1}K_1-C\right)\varepsilon^{2\beta}.\label{line:separationfromtheoriginforced}
\end{align}
\medskip

\textbf{Step 1.4: Deriving a contradiction for a large constant $K_1>0$.}

\medskip

Note that $\left|\frac{\overline{x}_{\alpha}^1}{\varepsilon}-\overline{\xi}_{\alpha}\right|\leq\left|\frac{\overline{x}_{\alpha}^1}{\varepsilon}-\frac{\overline{x}_{\alpha}}{\varepsilon}\right|+\left|\frac{\overline{x}_{\alpha}}{\varepsilon}-\overline{\xi}_{\alpha}\right|\leq\eta$ by \eqref{line:etabound1} and the fact that $\overline{x}_{\alpha}^1\in\overline{B}_{\varepsilon\mu}(\overline{x}_{\alpha})$. Therefore, by connecting the viscosity inequalities \eqref{line:subsolutiontest1}, \eqref{line:supersolutiontest1}, we obtain, by \eqref{line:correctors}, that
\begin{align}
C(\varepsilon^{\theta}+\varepsilon^{\beta})-\overline{F}\left(\frac{\overline{z}_{\alpha}-\hat{y}}{\varepsilon^{\beta}}\right)&\geq\lambda v^{\lambda,\eta}\left(\overline{\xi}_{\alpha},\frac{\overline{z}_{\alpha}-\hat{y}}{\varepsilon^{\beta}}\right)\notag\\
&\geq-F^{\eta}\left(\frac{1}{\varepsilon}Y,p,\overline{\xi}_{\alpha}\right)\notag\\
&\geq-F\left(\varepsilon X,p,\frac{\overline{x}_{\alpha}^1}{\varepsilon}\right)\notag\\
&\geq-F\left(\varepsilon X+2\varepsilon^{1-\beta}I_n,p+\frac{2(\overline{x}_{\alpha}^1-\overline{z}_{\alpha})}{\varepsilon^{\beta}},\frac{\overline{x}_{\alpha}^1}{\varepsilon}\right)+E_1+E_2\notag\\
&\geq K+\frac{\overline{t}_{\alpha}^1-\hat{s}}{\varepsilon^{\beta}}+\frac{\overline{t}_{\alpha}^1-\hat{t}}{\varepsilon^{\beta}}+E_1+E_2\label{line:chainofinequalities}
\end{align}
where
\begin{align*}
E_1&:=F\left(\varepsilon X,p+\frac{2(\overline{x}_{\alpha}^1-\overline{z}_{\alpha})}{\varepsilon^{\beta}},\frac{\overline{x}_{\alpha}^1}{\varepsilon}\right)-F\left(\varepsilon X,p,\frac{\overline{x}_{\alpha}^1}{\varepsilon}\right),\\
E_2&:=F\left(\varepsilon X+2\varepsilon^{1-\beta}I_n,p+\frac{2(\overline{x}_{\alpha}^1-\overline{z}_{\alpha})}{\varepsilon^{\beta}},\frac{\overline{x}_{\alpha}^1}{\varepsilon}\right)-F\left(\varepsilon X,p+\frac{2(\overline{x}_{\alpha}^1-\overline{z}_{\alpha})}{\varepsilon^{\beta}},\frac{\overline{x}_{\alpha}^1}{\varepsilon}\right).
\end{align*}
Note that, from $|\overline{x}_{\alpha}^1-\overline{z}_{\alpha}|\leq|\overline{x}_{\alpha}^1-\overline{x}_{\alpha}|+|\overline{x}_{\alpha}-\overline{z}_{\alpha}|\leq C\varepsilon^{\min\left\{\frac{1}{2}+\beta,1-\theta\right\}}$ by \eqref{line:zalphazhat}. Therefore, we have, by \eqref{line:separationfromtheoriginforced},
\begin{align*}
\left|p+\frac{2(\overline{x}_{\alpha}^1-\overline{z}_{\alpha})}{\varepsilon^{\beta}}\cdot\nu\right|\geq\left(C^{-1}K_1-C\right)\varepsilon^{2\beta}
\end{align*}
for $\nu\in[0,1]$ if we require $K_1>C^2$, $2\beta\leq\min\left\{\frac{1}{2},1-\theta-\beta\right\}$ (and also $\beta\leq\theta$ from the previous requirement). This implies, with \eqref{line:hessiansforced}, that
\begin{align*}
|E_1|\leq C\varepsilon^{-\beta}\left((C^{-1}K_1-C)\varepsilon^{2\beta}\right)^{-1}\varepsilon^{\min\left\{\frac{1}{2},1-\theta-\beta\right\}},
\end{align*}
and therefore, we see that there exists a constant $C>0$ depending only on the data such that if $K_1>C$, then
\begin{equation}
\begin{cases}
|E_1|\leq\frac{C}{K_1-C}\varepsilon^{\min\left\{\frac{1}{2},1-\theta-\beta\right\}-3\beta}, \quad & \\
|E_2|\leq 2n\varepsilon^{1-\beta}. \quad & 
\end{cases}\label{line:commuatatorestimateforced}
\end{equation}

Therefore, by \eqref{line:supersolutiontest}, \eqref{line:chainofinequalities}, \eqref{line:commuatatorestimateforced}, we have
\begin{align*}
2K_1\varepsilon^{\beta}\leq C\left(\varepsilon^{\min\left\{\theta,\beta\right\}}+\frac{1}{K_1-C}\varepsilon^{\min\left\{\frac{1}{2},1-\theta-\beta\right\}-3\beta}\right)
\end{align*}
for some constant $C>0$ (with a larger one if necessary) depending only on the data. Now, we take $\beta=\frac{1}{8}$ and any $\theta\in\left[\frac{1}{8},\frac{3}{8}\right]$ as an optimal choice. Then, taking $K_1=C+1$ yields a contradiction. Therefore, there exists a constant $K_1>0$ depending only on the data such that if $\hat{t}\leq\hat{s}$, then $\hat{t}=0$.

\medskip

\textbf{Case 2. $\hat{t}\geq\hat{s}$.}

\medskip

\textbf{Step 2.1: Inf-involutions of auxiliary functions and their maximizers.}

\medskip

Let
\begin{align*}
\Psi_1(x,\xi,z,t):=u^{\varepsilon}(x,t)&-\left(\varepsilon v^{\lambda,\eta}\left(\xi,\frac{z-\hat{y}}{\varepsilon^{\beta}}\right)+\frac{|\varepsilon \xi-\hat{y}|^2+|\varepsilon \xi-z|^2}{2\varepsilon^{\beta}}\right)\\
&\qquad-\frac{|x-\varepsilon\xi|^2}{2\alpha}-\frac{|z-\hat{z}|^2}{4\varepsilon^{\beta}}-\frac{|t-\hat{s}|^2+|t-\hat{t}|^2}{2\varepsilon^{\beta}}-Kt.
\end{align*}
Also, we let
\begin{align*}
\overline{\Psi}_1^{\mu}(x,\xi,z,t):=u^{\varepsilon}(x,t)&-\inf_{w\in\overline{B}_{\mu}(\xi)}\left(\varepsilon v^{\lambda,\eta}\left(w,\frac{z-\hat{y}}{\varepsilon^{\beta}}\right)+\frac{|\varepsilon w-\hat{y}|^2+|\varepsilon w-z|^2}{2\varepsilon^{\beta}}\right)\\
&\qquad-\frac{|x-\varepsilon\xi|^2}{2\alpha}-\frac{|z-\hat{z}|^2}{4\varepsilon^{\beta}}-\frac{|t-\hat{s}|^2+|t-\hat{t}|^2}{2\varepsilon^{\beta}}-Kt,
\end{align*}
where $\mu=\frac{1}{2}\eta=\frac{1}{2}\varepsilon^{\beta}$, and $\alpha>0$ is to be determined later. Then, $\overline{\Psi}_1^{\mu}$ attains a global maximum, say at $(\overline{x}_{\alpha},\overline{\xi}_{\alpha},\overline{z}_{\alpha},\overline{t}_{\alpha})\in\mathbb{R}^{3n}\times[0,T]$ (by abuse of notations).

\medskip

From $\overline{\Psi}_1^{\mu}(\overline{x}_{\alpha},\overline{\xi}_{\alpha},\overline{z}_{\alpha},\overline{t}_{\alpha})\geq\overline{\Psi}_1^{\mu}(\varepsilon\overline{\xi}_{\alpha},\overline{\xi}_{\alpha},\overline{z}_{\alpha},\overline{t}_{\alpha})$, we have
\begin{align*}
u^{\varepsilon}(\overline{x}_{\alpha},\overline{t}_{\alpha})-\frac{|\overline{x}_{\alpha}-\varepsilon\overline{\xi}_{\alpha}|^2}{2\alpha}\geq u^{\varepsilon}(\varepsilon\overline{\xi}_{\alpha},\overline{t}_{\alpha}),
\end{align*}
which implies
\begin{align}
\left|\overline{\xi}_{\alpha}-\frac{\overline{x}_{\alpha}}{\varepsilon}\right|\leq\frac{1}{2}\eta\label{line:etabound2}
\end{align}
with $\alpha:=\frac{1}{2(C+1)}\varepsilon\eta$. Here, $C>0$ depends only on the Lipschitz constant of $u^{\varepsilon}$.

We estimate $|\overline{z}_{\alpha}-\hat{z}|$ and $|\overline{t}_{\alpha}-\hat{t}|$ by using the term $-\frac{|z-\hat{z}|^2}{4\varepsilon^{\beta}}-\frac{|t-\hat{t}|^2}{2\varepsilon^{\beta}}$ as in Case 1. First of all, it holds that
\begin{align*}
&\overline{\Psi}_1^{\mu}(\overline{x}_{\alpha},\overline{\xi}_{\alpha},\overline{z}_{\alpha},\overline{t}_{\alpha})\\
&\leq-\frac{|\overline{z}_{\alpha}-\hat{z}|^2+|\overline{t}_{\alpha}-\hat{t}|^2}{4\varepsilon^{\beta}}+\left(u^{\varepsilon}(\overline{x}_{\alpha},\overline{t}_{\alpha})-u^{\varepsilon}(\varepsilon\overline{\xi}_{\alpha},\overline{t}_{\alpha})\right)\\
&\quad\quad-\inf_{w\in\overline{B}_{\mu}(\overline{\xi}_{\alpha})}\left(\varepsilon v^{\lambda,\eta}\left(w,\frac{\overline{z}_{\alpha}-\hat{y}}{\varepsilon^{\beta}}\right)-\varepsilon v^{\lambda,\eta}\left(\overline{\xi}_{\alpha},\frac{\overline{z}_{\alpha}-\hat{y}}{\varepsilon^{\beta}}\right)\right)\\
&\qquad\qquad -\inf_{w\in\overline{B}_{\mu}(\overline{\xi}_{\alpha})}\left(\frac{\left(|\varepsilon w-\hat{y}|^2-|\varepsilon\overline{\xi}_{\alpha}-\hat{y}|^2\right)-\left(|\varepsilon w-\overline{z}_{\alpha}|^2-|\varepsilon\overline{\xi}_{\alpha}-\overline{z}_{\alpha}|^2\right)}{2\varepsilon^{\beta}}\right)\\
&+u^{\varepsilon}(\varepsilon\overline{\xi}_{\alpha},\overline{t}_{\alpha})-\varepsilon v^{\lambda,\eta}\left(\overline{\xi}_{\alpha},\frac{\overline{z}_{\alpha}-\hat{y}}{\varepsilon^{\beta}}\right)-\frac{|\varepsilon\overline{\xi}_{\alpha}-\hat{y}|^2+|\varepsilon\overline{\xi}_{\alpha}-\overline{z}_{\alpha}|^2}{2\varepsilon^{\beta}}-\frac{|\overline{t}_{\alpha}-\hat{s}|^2}{2\varepsilon^{\beta}}-K\overline{t}_{\alpha}.
\end{align*}
We note that $\Phi(\hat{x},\hat{y},\hat{z},\hat{t},\hat{s})\geq\Phi(\varepsilon\overline{\xi}_{\alpha},\hat{y},\overline{z}_{\alpha},\overline{t}_{\alpha},\hat{s})$ and
\begin{align*}
&\left(|\varepsilon w-\hat{y}|^2-|\varepsilon\overline{\xi}_{\alpha}-\hat{y}|^2\right)-\left(|\varepsilon w-\overline{z}_{\alpha}|^2-|\varepsilon\overline{\xi}_{\alpha}-\overline{z}_{\alpha}|^2\right)\\
&=2\varepsilon(w-\overline{\xi}_{\alpha})\cdot\left(\varepsilon(w-\overline{\xi}_{\alpha})+(\varepsilon\overline{\xi}_{\alpha}-\hat{y})+(\varepsilon\overline{\xi}_{\alpha}-\overline{z}_{\alpha})\right).
\end{align*}
By these facts, together with \eqref{line:Lipschitzestimates}, \eqref{line:correctors}, we have
\begin{align*}
&\overline{\Psi}_1^{\mu}(\overline{x}_{\alpha},\overline{\xi}_{\alpha},\overline{z}_{\alpha},\overline{t}_{\alpha})+\frac{|\overline{z}_{\alpha}-\hat{z}|^2+|\overline{t}_{\alpha}-\hat{t}|^2}{4\varepsilon^{\beta}}\\
&\leq C\varepsilon^{1+\beta}+C\varepsilon^{1+\beta}\left|\frac{\overline{z}_{\alpha}-\hat{y}}{\varepsilon^{\beta}}\right|+\varepsilon\left(\varepsilon^{1+\beta}+|\varepsilon\overline{\xi}_{\alpha}-\hat{y}|+|\varepsilon\overline{\xi}_{\alpha}-\overline{z}_{\alpha}|\right)\\
&+\underbrace{u^{\varepsilon}(\hat{x},\hat{t})-\varepsilon v^{\lambda,\eta}\left(\frac{\hat{x}}{\varepsilon},\frac{\hat{z}-\hat{y}}{\varepsilon^{\beta}}\right)-\frac{|\hat{x}-\hat{y}|^2+|\hat{x}-\hat{z}|^2}{2\varepsilon^{\beta}}-\frac{|\hat{t}-\hat{s}|^2}{2\varepsilon^{\beta}}-K\hat{t}}_{=\Psi_1\left(\hat{x},\frac{\hat{x}}{\varepsilon},\hat{z},\hat{t}\right)\leq\overline{\Psi}_1^{\mu}\left(\hat{x},\frac{\hat{x}}{\varepsilon},\hat{z},\hat{t}\right)\leq\overline{\Psi}_1^{\mu}(\overline{x}_{\alpha},\overline{\xi}_{\alpha},\overline{z}_{\alpha},\overline{t}_{\alpha})},
\end{align*}
which then yields, with \eqref{line:hats},
\begin{align}
\frac{|\overline{z}_{\alpha}-\hat{z}|^2+|\overline{t}_{\alpha}-\hat{t}|^2}{4\varepsilon^{\beta}}\leq C(\varepsilon^{1+\beta}+\varepsilon|\overline{z}_{\alpha}-\hat{z}|+\varepsilon|\varepsilon\overline{\xi}_{\alpha}-\overline{z}_{\alpha}|).\label{line:tildeandhat2}
\end{align}

\medskip

Choose $\overline{\xi}_{\alpha}^1\in\overline{B}_{\mu}(\overline{\xi}_{\alpha})$ such that the infimum
\begin{align*}
\inf_{w\in\overline{B}_{\mu}(\overline{\xi}_{\alpha})}\left(\varepsilon v^{\lambda,\eta}\left(w,\frac{\overline{z}_{\alpha}-\hat{y}}{\varepsilon^{\beta}}\right)+\frac{|\varepsilon w-\hat{y}|^2+|\varepsilon w-\overline{z}_{\alpha}|^2}{2\varepsilon^{\beta}}\right)
\end{align*}
is attained. Then, $\overline{\Psi}_{\mu}^1(\overline{x}_{\alpha},\overline{\xi}_{\alpha},\overline{z}_{\alpha},\overline{t}_{\alpha})\geq\overline{\Psi}_{\mu}^1(\overline{x}_{\alpha},\overline{\xi}_{\alpha},\varepsilon\overline{\xi}_{\alpha},\overline{t}_{\alpha})$ yields, with \eqref{line:correctors},
\begin{align*}
2|\varepsilon\overline{\xi}_{\alpha}^1-\overline{z}_{\alpha}|^2-2|\varepsilon\overline{\xi}_{\alpha}^1-\varepsilon\overline{\xi}_{\alpha}|^2+|\overline{z}_{\alpha}-\hat{z}|^2-|\varepsilon\overline{\xi}_{\alpha}-\hat{z}|^2\leq C\varepsilon^{1-\theta}|\varepsilon\overline{\xi}_{\alpha}-\overline{z}_{\alpha}|.
\end{align*}
By elementary calculations using
\begin{equation*}
\begin{cases}
|\varepsilon\overline{\xi}_{\alpha}^1-\overline{z}_{\alpha}|^2=|\varepsilon\overline{\xi}_{\alpha}^1-\varepsilon\overline{\xi}_{\alpha}|^2+2(\varepsilon\overline{\xi}_{\alpha}^1-\varepsilon\overline{\xi}_{\alpha})\cdot(\varepsilon\overline{\xi}_{\alpha}-\overline{z}_{\alpha})+|\varepsilon\overline{\xi}_{\alpha}-\overline{z}_{\alpha}|^2, \quad & \\
|\varepsilon\overline{\xi}_{\alpha}-\hat{z}|^2=|\varepsilon\overline{\xi}_{\alpha}-\overline{z}_{\alpha}|^2+2(\varepsilon\overline{\xi}_{\alpha}-\overline{z}_{\alpha})\cdot(\overline{z}_{\alpha}-\hat{z})+|\overline{z}_{\alpha}-\hat{z}|^2, \quad & 
\end{cases}
\end{equation*}
we see that
\begin{align*}
|\varepsilon\overline{\xi}_{\alpha}-\overline{z}_{\alpha}|\leq C\left(\varepsilon^{1-\theta}+|\overline{z}_{\alpha}-\hat{z}|\right).
\end{align*}
Combining this with \eqref{line:tildeandhat2}, we obtain
\begin{equation}
\begin{cases}
|\overline{z}_{\alpha}-\hat{z}|\leq C\varepsilon^{\frac{1}{2}+\beta}, \quad & \\
|\varepsilon\overline{\xi}_{\alpha}-\overline{z}_{\alpha}|\leq C\varepsilon^{\min\left\{\frac{1}{2}+\beta,1-\theta\right\}}, \quad & 
\end{cases}\label{line:zalphazhat2}
\end{equation}
which in turn implies, again by \eqref{line:tildeandhat2},
\begin{align}
|\overline{t}_{\alpha}-\hat{t}|\leq C\varepsilon^{\frac{1}{2}+\beta}.\label{line:talphathat2}
\end{align}

\medskip

\textbf{Step 2.2: The viscosity inequalities from the Crandall-Ishii lemma.}

\medskip

If $\overline{t}_{\alpha}=0$, we then necessarily have $\hat{t},\hat{s}\leq C\varepsilon^{\beta}$, which implies \eqref{line:upperbound} as before. We assume the other case $\overline{t}_{\alpha}>0$.

Let
\begin{equation*}
\begin{cases}
h(x,\xi,t):=\frac{|x-\varepsilon\xi|^2}{2\alpha}+\frac{|t-\hat{s}|^2+|t-\hat{t}|^2}{2\varepsilon^{\beta}}+Kt+\frac{|\overline{z}_{\alpha}-\hat{z}|^2}{4\varepsilon^{\beta}}, \quad & \\
\overline{\ell}^{\varepsilon,\mu}(\xi):=\inf_{w\in\overline{B}_{\mu}(\xi)}\left(\varepsilon v^{\lambda,\eta}\left(w,\frac{\overline{z}_{\alpha}-\hat{y}}{\varepsilon^{\beta}}\right)+\frac{|\varepsilon w-\hat{y}|^2+|\varepsilon w-\overline{z}_{\alpha}|^2}{2\varepsilon^{\beta}}\right), \quad &
\end{cases}
\end{equation*}
so that
\begin{align*}
(x,\xi,t)\longmapsto\overline{\Psi}^{\mu}_1(x,\xi,\overline{z}_{\alpha},t)=u^{\varepsilon}(x,t)-\overline{\ell}^{\varepsilon,\mu}(\xi)-h(x,\xi,t)
\end{align*}
attains a global maximum at $(\overline{x}_{\alpha},\overline{\xi}_{\alpha},\overline{t}_{\alpha})\in\mathbb{R}^{2n}\times(0,T]$.
\medskip

Since \cite[(8.5)]{CIL92} holds for our $F$ and for $u^{\varepsilon}$, $\overline{\ell}^{\varepsilon,\mu}$, we can apply the Crandall-Ishii lemma \cite[Theorem 8.3]{CIL92} to see that for every $\gamma>0$, there exist $X,Y\in S^n$ such that
\begin{equation}
\begin{cases}
(b_1,p,X)\in\overline{\mathcal{P}}^{2,+}u^{\varepsilon}(\overline{x}_{\alpha},\overline{t}_{\alpha}), \quad &\\
(b_2,q,Y)\in\overline{\mathcal{P}}^{2,-}\overline{\ell}^{\varepsilon,\mu}(\overline{\xi}_{\alpha}), \quad &\\
b_1=b_1-b_2=h_t(\overline{x}_{\alpha},\overline{\xi}_{\alpha},\overline{t}_{\alpha})=K+\frac{\overline{t}_{\alpha}-\hat{s}}{\varepsilon^{\beta}}+\frac{\overline{t}_{\alpha}-\hat{t}}{\varepsilon^{\beta}}, \quad &\\
-\left(\frac{1}{\gamma}+\|A\|\right)I_{2n}\leq\begin{pmatrix}X &0 \\0 &-Y \end{pmatrix}\leq A+\gamma A^2,
\end{cases}\label{line:semijetsforced2}
\end{equation}
where
\begin{align*}
p&:=D_xh(\overline{x}_{\alpha},\overline{\xi}_{\alpha},\overline{t}_{\alpha})=\frac{\overline{x}_{\alpha}-\varepsilon\overline{\xi}_{\alpha}}{\alpha},\\
q&:=-D_{\xi}h(\overline{x}_{\alpha},\overline{\xi}_{\alpha},\overline{t}_{\alpha})=\varepsilon p,\\
A&:=D^2_{(x,\xi)}h(\overline{x}_{\alpha},\overline{\xi}_{\alpha},\overline{t}_{\alpha})=\frac{1}{\alpha}\begin{pmatrix}I_n & -\varepsilon I_n \\ -\varepsilon I_n & \varepsilon^2 I_n \end{pmatrix}.
\end{align*}
Taking $\gamma=\alpha$, we have, from \eqref{line:semijetsforced2}, that
\begin{equation}
\begin{cases}
-\frac{C}{\varepsilon^{\beta}}I_n\leq\varepsilon X\leq\frac{C}{\varepsilon^{\beta}}I_n, \quad & \\
\quad\hspace{2mm} \varepsilon X\leq\frac{1}{\varepsilon} Y \quad & 
\end{cases}\label{line:hessiansforced2}
\end{equation}
as before.

\medskip

From the viscosity subsolution test to $u^{\varepsilon}$ at $(\overline{x}_{\alpha},\overline{t}_{\alpha})$,
\begin{align}
K+\frac{\overline{t}_{\alpha}-\hat{t}}{\varepsilon^{\beta}}+\frac{\overline{t}_{\alpha}-\hat{s}}{\varepsilon^{\beta}}+F_*\left(\varepsilon X,p,\frac{\overline{x}_{\alpha}}{\varepsilon}\right)\leq0.\label{line:subsolutiontest2}
\end{align}
Also, by the choice of $\overline{\xi}_{\alpha}^1\in\overline{B}_{\mu}(\overline{\xi}_{\alpha})$ and by the definition of $\overline{\ell}^{\varepsilon,\mu}$, we can apply \cite[Lemma 13.2]{CM14} to obtain
\begin{align*}
\left(b_2,q-\varepsilon^{1-\beta}\left((\varepsilon\overline{\xi}_{\alpha}^1-\hat{y})+(\varepsilon\overline{\xi}_{\alpha}^1-\overline{z}_{\alpha})\right),Y-2\varepsilon^{2-\beta}I_n\right)\in\overline{\mathcal{P}}^{2,-}\varepsilon v^{\lambda,\eta}\left(\overline{\xi}_{\alpha}^1,\frac{\overline{z}_{\alpha}-\hat{y}}{\varepsilon^{\beta}}\right),
\end{align*}
which gives, from the supersolution test to $v^{\lambda,\eta}$,
\begin{align}
\lambda v^{\lambda,\eta}\left(\overline{\xi}_{\alpha}^1,\frac{\overline{z}_{\alpha}-\hat{y}}{\varepsilon^{\beta}}\right)+\left(F^{\eta}\right)^*\left(\frac{1}{\varepsilon}Y-2\varepsilon^{1-\beta}I_n,p-\frac{2(\varepsilon\overline{\xi}_{\alpha}^1-\overline{z}_{\alpha})}{\varepsilon^{\beta}},\frac{\overline{x}_{\alpha}^1}{\varepsilon}\right)\geq0.\label{line:supersolutiontest2}
\end{align}

\textbf{Step 2.3: Separation of the gradient $p$ from the origin.}

\medskip

From $\hat{t}\geq\hat{s}$ and \eqref{line:subsolutiontest2}, we have
\begin{align*}
K+\frac{2(\overline{t}_{\alpha}-\hat{t})}{\varepsilon^{\beta}}\leq(-F)^*\left(\varepsilon X,p,\frac{\overline{x}_{\alpha}}{\varepsilon}\right)&\leq(-F)^*\left(\frac{1}{\varepsilon}Y,p,\frac{\overline{x}_{\alpha}}{\varepsilon}\right)\\
&\leq\frac{1}{\varepsilon}(-F)^*\left(Y,q,\frac{\overline{x}_{\alpha}}{\varepsilon}\right)\leq\frac{C|q|}{\varepsilon\mu}\leq\frac{C|p|}{\varepsilon^{\beta}}\\
\end{align*}
where we used \cite[Lemma 13.1]{CM14} for $(b_2,q,Y)\in\overline{\mathcal{P}}^{2,-}\overline{\ell}^{\varepsilon,\mu}(\overline{\xi}_{\alpha})$ in the second-last inequality. By \eqref{line:tildeandhat2}, we see that there exists a constant $C>0$ depending only on the data such that
\begin{align}
|p|\geq\left(C^{-1}K_1-C\right)\varepsilon^{2\beta}.\label{line:separationfromtheoriginforced2}
\end{align}
whenever $K_1>C^2$. Here, we require $\beta\leq\frac{1}{2}$.

\medskip

\textbf{Step 2.4: Deriving a contradiction for a large constant $K_1>0$.}

\medskip

Note that $\left|\frac{\overline{x}_{\alpha}}{\varepsilon}-\overline{\xi}_{\alpha}^1\right|\leq\left|\frac{\overline{x}_{\alpha}}{\varepsilon}-\overline{\xi}_{\alpha}\right|+\left|\overline{\xi}_{\alpha}-\overline{\xi}_{\alpha}^1\right|\leq\eta$ by \eqref{line:etabound2} and the fact that $\overline{\xi}_{\alpha}^1\in\overline{B}_{\mu}(\overline{\xi}_{\alpha})$. Therefore, by connecting the viscosity inequalities \eqref{line:subsolutiontest2}, \eqref{line:supersolutiontest2}, we obtain, by \eqref{line:correctors}, that
\begin{align}
C(\varepsilon^{\theta}+\varepsilon^{\beta})-\overline{F}\left(\frac{\overline{z}_{\alpha}-\hat{y}}{\varepsilon^{\beta}}\right)&\geq\lambda v^{\lambda,\eta}\left(\overline{\xi}_{\alpha}^1,\frac{\overline{z}_{\alpha}-\hat{y}}{\varepsilon^{\beta}}\right)\notag\\
&\geq-F^{\eta}\left(\frac{1}{\varepsilon}Y-2\varepsilon^{1-\beta}I_n,p-\frac{2(\varepsilon\overline{\xi}_{\alpha}^1-\overline{z}_{\alpha})}{\varepsilon^{\beta}},\overline{\xi}_{\alpha}^1\right)\notag\\
&\geq-F\left(\varepsilon X-2\varepsilon^{1-\beta}I_n,p-\frac{2(\varepsilon\overline{\xi}_{\alpha}^1-\overline{z}_{\alpha})}{\varepsilon^{\beta}},\frac{\overline{x}_{\alpha}}{\varepsilon}\right)\notag\\
&\geq-F\left(\varepsilon X,p,\frac{\overline{x}_{\alpha}}{\varepsilon}\right)+E_1+E_2\notag\\
&\geq K+\frac{\overline{t}_{\alpha}^1-\hat{s}}{\varepsilon^{\beta}}+\frac{\overline{t}_{\alpha}^1-\hat{t}}{\varepsilon^{\beta}}+E_1+E_2\label{line:chainofinequalities2}
\end{align}
where
\begin{align*}
E_1&:=F\left(\varepsilon X-2\varepsilon^{1-\beta}I_n,p,\frac{\overline{x}_{\alpha}}{\varepsilon}\right)-F\left(\varepsilon X-2\varepsilon^{1-\beta}I_n,p-\frac{2(\varepsilon\overline{\xi}_{\alpha}^1-\overline{z}_{\alpha})}{\varepsilon},\frac{\overline{x}_{\alpha}}{\varepsilon}\right),\\
E_2&:=F\left(\varepsilon X-2\varepsilon^{1-\beta}I_n,p,\frac{\overline{x}_{\alpha}}{\varepsilon}\right)-F\left(\varepsilon X,p,\frac{\overline{x}_{\alpha}}{\varepsilon}\right).
\end{align*}
Note that, from $|\varepsilon\overline{\xi}_{\alpha}^1-\overline{z}_{\alpha}|\leq|\varepsilon\overline{\xi}_{\alpha}^1-\varepsilon\overline{\xi}_{\alpha}|+|\varepsilon\overline{\xi}_{\alpha}-\overline{z}_{\alpha}|\leq C\varepsilon^{\min\left\{\frac{1}{2}+\beta,1-\theta\right\}}$ by \eqref{line:zalphazhat2}. Therefore, we have, by \eqref{line:separationfromtheoriginforced2},
\begin{align*}
\left|p+\frac{2(\varepsilon\overline{\xi}_{\alpha}^1-\overline{z}_{\alpha})}{\varepsilon^{\beta}}\cdot\nu\right|\geq\left(C^{-1}K_1-C\right)\varepsilon^{2\beta}
\end{align*}
for $\nu\in[0,1]$ if we require $K_1>C^2$, $2\beta\leq\min\left\{\frac{1}{2},1-\theta-\beta\right\}$ with a larger constant $C>0$. This implies, with \eqref{line:hessiansforced}, that
\begin{align*}
|E_1|\leq C\varepsilon^{-\beta}\left((C^{-1}K_1-C)\varepsilon^{2\beta}\right)^{-1}\varepsilon^{\min\left\{\frac{1}{2},1-\theta-\beta\right\}},
\end{align*}
and therefore, we see that there exists a constant $C>0$ depending only on the data such that if $K_1>C$, then
\begin{equation}
\begin{cases}
|E_1|\leq\frac{C}{K_1-C}\varepsilon^{\min\left\{\frac{1}{2},1-\theta-\beta\right\}-3\beta}, \quad & \\
|E_2|\leq 2n\varepsilon^{1-\beta}. \quad & 
\end{cases}\label{line:commuatatorestimateforced2}
\end{equation}

Therefore, by \eqref{line:supersolutiontest}, \eqref{line:chainofinequalities}, \eqref{line:commuatatorestimateforced}, we have
\begin{align*}
2K_1\varepsilon^{\beta}\leq C\left(\varepsilon^{\min\left\{\theta,\beta\right\}}+\frac{1}{K_1-C}\varepsilon^{\min\left\{\frac{1}{2},1-\theta-\beta\right\}-3\beta}\right)
\end{align*}
for some constant $C>0$ (with a larger one if necessary) depending only on the data. Now, we take $\beta=\frac{1}{8}$ and any $\theta\in\left[\frac{1}{8},\frac{3}{8}\right]$ as an optimal choice. Then, taking $K_1=C+1$ yields a contradiction. Therefore, there exists a constant $K_1>0$ depending only on the data such that if $\hat{t}\geq\hat{s}$, then $\hat{s}=0$.

\medskip

To prove the lower bound
$$
u^{\varepsilon}(x,t)-u(x,t)\geq -C(1+T)\varepsilon^{1/8}
$$
for all $(x,t)\in\mathbb{R}^n\times[0,T]$, we alternatively consider another auxiliary function, for a given $\varepsilon\in(0,1)$,
\begin{align*}
\Phi_1(x,y,z,t,s):=u^{\varepsilon}(x,t)&-u(y,s)-\varepsilon v^{\lambda}_{\eta}\left(\frac{x}{\varepsilon},\frac{z-y}{\varepsilon^{\beta}}\right)\\
\qquad\qquad\qquad&+\frac{|x-y|^2+|t-s|^2}{2\varepsilon^{\beta}}+\frac{|x-z|^2}{2\varepsilon^{\beta}}+K(t+s)+\gamma_1\langle y\rangle,
\end{align*}
where $\lambda=\varepsilon^{\theta},\ \eta=\varepsilon^{\beta},\ K=K_1\varepsilon^{\beta},\ \gamma_1\in(0,\varepsilon^{\beta}]$, and $\theta,\beta\in(0,1], K_1>0$ are constants to be determined. Then, the global minimum of $\Phi_1$ on $\mathbb{R}^{3n}\times[0,T]^2$ is attained at a certain point $(\hat{x},\hat{y},\hat{z},\hat{t},\hat{s})\in\mathbb{R}^{3n}\times[0,T]^2$, and we proceed estimates similarly as before.
\end{proof}

\subsection{An example}\label{subsec:example}

We prove Propositions \ref{prop:example}, \ref{prop:example2} in this subsection.

\begin{proof}[Proof of Proposition \ref{prop:example}]
As the forcing term and the initial data are radially symmetric, we have $u^{\varepsilon}(x,t)=\varphi^{\varepsilon}(r,t)$ for $r=|x|\geq0,t\geq0$, where $\varphi^{\varepsilon}$ solves
\begin{align*}
\begin{cases}
\varphi^{\varepsilon}_t-\frac{\varepsilon}{r}\varphi^{\varepsilon}_r-|\varphi^{\varepsilon}_r|=0, \quad &\text{in}\quad(0,\infty)\times(0,\infty),\\
\varphi^{\varepsilon}(r,0)=-r, \quad &\text{on}\quad[0,\infty).
\end{cases}
\end{align*}
By the optimal control formula for solutions to first-order concave Hamilton-Jacobi equations, we have
\begin{align*}
\varphi^{\varepsilon}(r,t)&=\sup\left\{-|\eta(t)|:\eta(0)=r,\ \left|\dot{\eta}(s)-\frac{\varepsilon}{\eta(s)}\right|\leq1,\ s\in[0,t]\right\}\\
&=\sup\left\{-\left|\varepsilon\xi\left(\frac{t}{\varepsilon}\right)\right|:\xi(0)=\frac{r}{\varepsilon},\ \left|\dot{\xi}(s_1)-\frac{1}{\xi(s_1)}\right|\leq1,\ s_1\in\left[0,\frac{t}{\varepsilon}\right]\right\}
\end{align*}
for $r,t\geq0$. Here, we made the changes of variables $\eta(s)=\varepsilon\xi\left(\frac{s}{\varepsilon}\right).$ and $s_1=\frac{s}{\varepsilon}$ for $s\in[0,t]$. We moreover have, for $r>t$,
\begin{align*}
\varphi^{\varepsilon}(r,t)=-\varepsilon\xi_1\left(\frac{t}{\varepsilon}\right),
\end{align*}
where $\xi_1:\left[0,\frac{t}{\varepsilon}\right]\to(0,\infty)$ is the solution to
\begin{align*}
\begin{cases}
\dot{\xi_1}(s_1)=-1+\frac{1}{\xi_1(s_1)}, \quad &\text{in}\quad(0,\infty)\times(0,\infty),\\
\xi_1(0)=\frac{r}{\varepsilon}, \quad &\text{on}\quad[0,\infty).
\end{cases}
\end{align*}

\medskip

Then, $\xi_1$ can be expressed as
\begin{align*}
\xi_1(s_1)=W\left(\left(\frac{r}{\varepsilon}-1\right)\exp\left(\frac{r}{\varepsilon}-s_1-1\right)\right)+1,
\end{align*}
where $W=W(z):[0,\infty)\to[0,\infty)$ is the Lambert W function defined by
\begin{align*}
w=W(z)\quad\Longleftrightarrow\quad z=w e^w
\end{align*}
for $z\geq0$. We can check easily that $W'(z)=\frac{W(z)}{z(1+W(z))}$ and
\begin{align*}
W(z)\geq \frac{1}{2}\log z\quad\text{for }z>0.
\end{align*}
Therefore, we immediately obtain, for $r=t>\varepsilon(1+e^{-1})$, that
\begin{align*}
-\varepsilon\xi_1\left(\frac{t}{\varepsilon}\right)\leq-\varepsilon\left(\frac{1}{2}\log\left(e^{-1}\left(\frac{t}{\varepsilon}-1\right)\right)+1\right)=-\frac{1}{2}\varepsilon\left(\log\left(\frac{t}{\varepsilon}-1\right)+1\right)<0.
\end{align*}
As $u(x,t)=0$ whenever $|x|=t$, we complete the proof.
\end{proof}

Now, we prove Proposition \ref{prop:example2}.

\begin{proof}[Proof of Proposition \ref{prop:example2}]
By \cite[Theorem 1.2]{MRR-M13}, there exists a smooth convex function $\phi$ in the variable $x'=(x_1,\cdots,x_{n-1})\in\mathbb{R}^{n-1}$ such that $\phi(x')+(\csc\alpha) t$ is a traveling wave solution to \eqref{eq:uepsilon} with $\varepsilon=1$ and with the initial datum $\phi$, and $\phi$ satisfies
\begin{align}\label{line:prop2line1}
u_0-C\leq\phi\leq u_0+C
\end{align}
with $C=2|A|\csc\alpha$, which we freeze in this proof.

Let $u^1$ be the solution to the unit scale problem \eqref{eq:uepsilon} with $\varepsilon=1$ and with the initial datum $u_0$. Applying the comparison principle to \eqref{line:prop2line1}, we obtain
\begin{align}\label{line:prop2line2}
u^1-C\leq\phi+(\csc\alpha) t\leq u^1+C.
\end{align}
Note that $u(x,t)=u_0(x)+(\csc\alpha) t$. Together with this fact, we apply \eqref{line:prop2line1} once more to \eqref{line:prop2line2} to obtain
\begin{align}\label{line:prop2line3}
|u^1(x,t)-u(x,t)|\leq 2C\quad\text{for all }(x,t)\in\mathbb{R}^n\times[0,\infty).
\end{align}
As $u_0$ is positively 1-homogeneous, we have $u^{\varepsilon}(x,t)=\varepsilon u^1\left(\frac{x}{\varepsilon},\frac{t}{\varepsilon}\right)$ and $\varepsilon u\left(\frac{x}{\varepsilon},\frac{t}{\varepsilon}\right)=u(x,t)$, which we apply to \eqref{line:prop2line3} to derive the conclusion.
\end{proof}

\section*{Acknowledgments}
The work of the author is supported in part by NSF CAREER grant DMS-1843320 and Korea Foundation for Advanced Studies.

The author would like to express his sincere gratitude to Prof. Yifeng Yu for suggesting the subject of homogenization of the curvature $G$-equation. The author also would like to express gratitude to the reviewers for the comments and the suggestions.

\section*{Conflict of interest}
The author states that there is no conflict of interest.

\section*{Data availability}
No datasets were generated or analysed during the current study.

\bibliographystyle{plain}
\bibliography{Preprint}

\end{document}